\numberwithin{equation}{section}
\numberwithin{figure}{section}
\newtheorem{theorem}{Theorem}[section]
\newtheorem{lemma}[theorem]{Lemma}
\newtheorem{cor}[theorem]{Corollary}
\newtheorem{prop}[theorem]{Proposition}
\newtheorem{introtheorem}{Theorem}
\newtheorem{introprop}[introtheorem]{Proposition}
\theoremstyle{definition}
\newtheorem{defi}[theorem]{Definition}
\newtheorem{nota}[theorem]{Notation}
\newtheorem*{ass}{Standing Assumption}
\theoremstyle{remark}
\newtheorem{remark}[theorem]{Remark}
\newtheorem{ex}[theorem]{Example}
\newcommand{\SL}[1]{\ensuremath{\mathrm{SL}(#1,\mathbb{R})}}
\newcommand{\Sp}[1]{\ensuremath{\mathrm{Sp}(#1,\mathbb{R})}}
\newcommand{\Id}{\ensuremath{\mathrm{Id}}}
\newcommand{\atheta}{\ensuremath{\mathfrak{a}_\theta}}
\newcommand{\Htwist}{\ensuremath{\mathcal{H}^{\mathrm{Twist}}(\widehat{\lambda};\mathfrak{a}_\theta)}}
\newcommand{\lieg}{\ensuremath{\mathfrak{g}}}
\newcommand{\olambda}{\widehat{\lambda}}
\newcommand{\Buse}[2]{\sigma_{\theta} \left( #1, #2 \right)}    
\newcommand{\norm}[1]{\left\lVert #1 \right\rVert}
\newcommand{\norma}[1]{\left\lVert #1 \right\rVert_{\mathfrak{a}}}
\newcommand{\dG}{\mathrm{d}_G}
\begin{document}

\title{Cataclysms for Anosov representations}
\author{Mareike Pfeil}
\begin{abstract}
In this paper, we construct cataclysm deformations for $\theta$-Anosov representations into a semisimple non-compact connected real Lie group $G$ with finite center, where $\theta \subset \Delta$ is a subset of the simple roots that is invariant under the opposition involution.
These generalize Thurston's cataclysms on Teichm\"uller space and Dreyer's cataclysms for Borel-Anosov representations into $\mathrm{PSL}(n, \mathbb{R})$.
We express the deformation also in terms of the boundary map.
Furthermore, we show that cataclysm deformations are additive and behave well with respect to composing a representation with a group homomorphism. 
Finally, we show that the deformation is injective for Hitchin representations, but not in general for $\theta$-Anosov representations. 
\end{abstract}

\maketitle 

\subsection*{Data availability statement}
Data sharing not applicable to this article as no datasets were generated or analysed during the current study.

\setcounter{tocdepth}{1}
\tableofcontents
\section{Introduction}

Let $S$ be a closed connected orientable surface of genus $g(S)$ at least $2$ and $\pi_1(S)$ its fundamental group.
Representations from $\pi_1(S)$ into a Lie group $G$ can carry information about specific structures on the surface.
A well-known example for this is Teichm\"uller space, the space of marked hyperbolic structures on $S$, whose elements can be identified with discrete and faithful representations from $\pi_1(S)$ into the group $\mathrm{PSL}(2,\mathbb{R})$.
In this paper, we are interested in \emph{Anosov representations} that describe a special dynamical structure on $S$.
They have first been introduced by Labourie in \cite{Labourie}, and his definition was extended by Guichard and Wienhard in \cite{GuichardWienhard_Anosov}.
In recent years, many equivalent characterizations of Anosov representations have been found and the field is an active area of research (see \cite{GGKW}, \cite{KLP_Anosov}, \cite{DGK_ConvexCocompact}, \cite{BPS}, \cite{Zhu_RelativelyDominated19},  \cite{Tsouvalas_ConvexCocompact},  \cite{KasselPotrie_Eigenvalue}, \cite{Zhu_RelativelyDominated21},  \cite{BrayCanaryKaoMartone},\cite{CanaryZhangZimmer_CuspedHitchin}).
Apart from Teichm\"uller space, important examples of Anosov representations are quasi-Fuchsian representations, Hitchin representations, maximal representations (\cite{BILW_MaximalRepresentations}) and  $\theta$-positive Anosov representations (\cite{GuichardWienhard_Positivity}).

Let $G$ be a semisimple Lie group.
An Anosov representation $\rho \colon \pi_1(S)$ is always Anosov \emph{with respect to a parabolic subgroup} $P_\theta^+$ of $G$, where $\theta \subset \Delta$ is a subset of the simple restricted roots. 
We call such a representation \emph{$\theta$-Anosov}.
Hitchin representations, for instance, are $\Delta$-Anosov.
Every $\theta$-Anosov representation $\rho \colon \pi_1(S) \to G$ has an associated boundary map $\zeta \colon \partial_\infty \tilde{S} \to \mathcal{F}_\theta$ into the flag manifold $\mathcal{F}_\theta = G/P_{\theta}^+$.
In this paper, we use a characterization of Anosov representations in terms of the existence of such boundary maps, together with $\theta$-divergence \cite{GGKW}.
In the following, fix a semisimple non-compact connected real Lie group $G$ with finite center and let $\theta \subset \Delta$ be a subset that is invariant under the opposition involution $\iota \colon \Delta \to \Delta$. \\

One approach to understanding Anosov representations is to use techniques that have proven helpful in the case of Teichm\"uller space.
Special deformations of hyperbolic structures, called \emph{earthquakes}, and their generalizations \emph{cataclysms} give insights into the structure of Teichm\"uller space (see \cite{Thurston_Earthquakes}, \cite{Thurston_MinimalStretchMaps} and \cite{Bonahon_Shearing}).
For example, cataclysms can be used to define shearing coordinates, which are closely related  to the symplectic structure on Teichm\"uller space (see \cite{Bonahon_Soezen}).
Thus, cataclysm deformations might also be a tool to understand Anosov representations.
First steps in this direction were taken by Dreyer in \cite{Dreyer_Cataclysms}. 
He generalized Thurston's cataclysm deformations to representations into $\mathrm{PSL}(n, \mathbb{R})$ which are Anosov with respect to the minimal parabolic subgroup.
This a strong assumption, which is not satisfied by most Anosov representations (see \cite{CanaryTsouvalas_Topological_Restrictions_Anosov_representations}). 
In this paper, we generalize Dreyer's construction and define cataclysm deformations for $\theta$-Anosov representations into a semisimple connected non-compact real Lie group $G$ that are Anosov with respect to a parabolic $P_\theta^+$.\\

Dreyer's cataclysms are closely related to Bonahon-Dreyer coordinates on the Hitchin component in $\mathrm{PSL}(n, \mathbb{R})$ (see \cite{Bonahon_Acta}).
Whereas the Hitchin component is well-understood, there is still a lot to learn about Anosov representations outside of the Hitchin component. 
Recent advances in this direction have been made in \cite{LeeLeeStecker}, where the authors study Anosov representations of triangle reflection groups into $\SL{3}$ and describe the set of Anosov representations that do not lie in the Hitchin component.
The cataclysms constructed in this paper provide a tool to also understand Anosov representations that do not lie in the Hitchin component. \\

In order to state the main result we introduce two more concepts.
A \emph{geodesic lamination $\lambda$} on $S$ is a collection of disjoint simple geodesics whose union is closed. 
We denote by $\tilde{\lambda}$ its lift to the universal cover $\tilde{S}$ and $\widehat{\lambda}$ its orientation cover, that is, a two-fold cover with a continuous orientation of the geodesics.
Let $\atheta = \bigcap_{\alpha \in \Delta \setminus \theta} \ker(\alpha)$ and $\Htwist$ be the space of \emph{$\atheta$-valued twisted transverse cycles}.
An element in $\Htwist$ assigns to every oriented arc transverse to $\tilde{\lambda}$ an element in $\atheta$ in a way that is finitely additive, invariant under homotopy respecting the lamination and twisted in the sense that if we change the orientation of an arc, the corresponding element in $\atheta$ changes by the opposition involution $\iota \colon \atheta \to \atheta$.

Our main result is the following:
\begin{introtheorem}[Theorem \ref{thm::cataclysm_deformation_hom}]
\label{thm::cataclysm_intro}
Let $\rho \colon \pi_1(S) \to G$ be a $\theta$-Anosov representation. 
There exists a neighborhood $\mathcal{V}_\rho$ of $0$ in $\Htwist$ and a continuous map 
\begin{align*}
\Lambda \colon \mathcal{V}_\rho &\to \mathrm{Hom}(\pi_1(S), G) \\
\varepsilon &\mapsto \Lambda^\varepsilon \rho
\end{align*}
such that $\Lambda^0 \rho = \rho$.
Up to possibly shrinking the neighborhood $\mathcal{V}_\rho$, the representation $\Lambda^\varepsilon \rho$ is again $\theta$-Anosov.
\end{introtheorem}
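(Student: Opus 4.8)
The plan is to realize the cataclysm first on the boundary map and then transport it to the representation. Starting from the $\rho$-equivariant boundary map $\zeta\colon\partial_\infty\tilde S\to\mathcal{F}_\theta$, I would build a new map $\zeta^\varepsilon\colon\partial_\infty\tilde S\to\mathcal{F}_\theta$ by ``shearing'' $\zeta$ across the leaves of $\tilde\lambda$ by the amounts prescribed by $\varepsilon$, and then \emph{define} $\Lambda^\varepsilon\rho$ to be the representation for which $\zeta^\varepsilon$ is equivariant. The standing hypothesis $\iota(\theta)=\theta$ enters here twice: it makes transversality a meaningful relation between two flags of $\mathcal{F}_\theta$, and it makes $\iota$ act on $\atheta$, so that a twisted cycle $\varepsilon\in\Htwist$ takes its values where they are needed. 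For a leaf $\ell$ of $\tilde\lambda$ with endpoints $\ell^\pm$, the transverse pair $\bigl(\zeta(\ell^+),\zeta(\ell^-)\bigr)$ of flags canonically determines an $\atheta$-parameter group of commuting transformations of $\mathcal{F}_\theta$ fixing both of them; ``shearing across $\ell$ by $v\in\atheta$'' means applying the member of this group labelled by $v$, and the coorientation of $\ell$ induced by a transverse arc, together with the twisting built into $\varepsilon$ (which converts orientation reversal into $\iota$), pins down all sign conventions consistently, working on the orientation cover $\olambda$.

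Concretely, fix a basepoint plaque $R_0$ of $\tilde S\setminus\tilde\lambda$. For any other plaque $R$, the finitely additive and $\pi_1(S)$-invariant cycle $\varepsilon$ integrates over an arc from $R_0$ to $R$ to a well-defined element of $\atheta$, and composing the associated shears across the leaves separating $R_0$ from $R$ (with $\zeta^\varepsilon=\zeta$ on $R_0$) defines $\zeta^\varepsilon$ on the closure of $R$, hence on all of $\partial_\infty\tilde S$ by density and continuity. When $\lambda$ has infinitely many leaves this is an infinite composition, and proving its convergence is the technical core of the construction: one uses the Hölder regularity of $\zeta$ and the uniform transversality and contraction estimates supplied by the $\theta$-Anosov hypothesis to show that the partial products converge geometrically in $G$, uniformly for $\varepsilon$ in a fixed small neighborhood of $0$. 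This ``slithering'' step -- in the spirit of the constructions of Bonahon and of Dreyer -- I would isolate as a lemma, and it is where I expect the real work. From it one also reads off, after possibly shrinking the neighborhood, that $\zeta^\varepsilon$ is continuous and injective, jointly continuous in $(\varepsilon,x)$, sends distinct points to transverse flags, is spanning enough that a $G$-element intertwining two of its translates is unique up to the center, and is dynamics-preserving; and $\zeta^0=\zeta$ because then every shear is trivial.

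Granting these properties, $\Lambda^\varepsilon\rho$ is defined by requiring $\Lambda^\varepsilon\rho(\gamma)\cdot\zeta^\varepsilon(x)=\zeta^\varepsilon(\gamma x)$ for every $x$: such a group element is unique by the previous paragraph, and it exists explicitly, being the composition of the shear-cocycle from $R_0$ to $\gamma R_0$ with $\rho(\gamma)$. That $\gamma\mapsto\Lambda^\varepsilon\rho(\gamma)$ is a homomorphism then follows from the concatenation property of shears, the $\pi_1(S)$-invariance of $\varepsilon$ and the $\rho$-equivariance of $\zeta$; and $\Lambda^0\rho=\rho$. Continuity of $\Lambda$, i.e.\ of each map $\varepsilon\mapsto\Lambda^\varepsilon\rho(\gamma)$, follows from the joint continuity of $(\varepsilon,x)\mapsto\zeta^\varepsilon(x)$: fixing finitely many points of $\partial_\infty\tilde S$ in general position, $\Lambda^\varepsilon\rho(\gamma)$ depends continuously on the images of those points and of their $\gamma$-translates.

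Finally, for $\varepsilon$ small the representation $\Lambda^\varepsilon\rho$ is again $\theta$-Anosov. The quickest argument is stability: the $\theta$-Anosov condition is open in $\mathrm{Hom}(\pi_1(S),G)$ for the compact-open topology \cite{GuichardWienhard_Anosov,GGKW}, so since $\Lambda$ is continuous with $\Lambda^0\rho=\rho$ being $\theta$-Anosov, the preimage under $\Lambda$ of an Anosov neighborhood of $\rho$ is an open neighborhood $\mathcal{V}'_\rho$ of $0$ contained in $\mathcal{V}_\rho$; renaming $\mathcal{V}'_\rho$ to $\mathcal{V}_\rho$ proves the statement. A more self-contained alternative is to verify directly, using the boundary-map-plus-$\theta$-divergence characterization of \cite{GGKW}, that $\zeta^\varepsilon$ is a transverse, dynamics-preserving boundary map for $\Lambda^\varepsilon\rho$ and that $\Lambda^\varepsilon\rho$ is $\theta$-divergent for $\varepsilon$ small -- the latter controlled by the same uniform estimates used to make the shearing product converge. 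Either way, the main obstacle is the earlier step: getting the possibly infinite composition of shears along the leaves accumulating on a boundary point to converge to a map that is still transverse and injective. That is exactly where the Anosov hypothesis is indispensable, and once it is in hand the passage to an Anosov representation is comparatively soft.
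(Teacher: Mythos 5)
Your proposal reproduces the paper's argument in its essentials: stretching maps $T_g^H$ built from the pair of transverse flags at the endpoints of a leaf, an infinite composition of these (the shearing map $\varphi^\varepsilon_{PQ}$) whose convergence is controlled by H\"older continuity of $\zeta$ together with the divergence-radius estimates, the explicit formula $\Lambda^\varepsilon\rho(\gamma)=\varphi^\varepsilon_{P(\gamma P)}\,\rho(\gamma)$, the homomorphism property from the composition and $\rho$-equivariance of the shearing maps (Proposition~\ref{prop::shearing_composition}), and the passage to Anosov-ness by openness of the $\theta$-Anosov locus in $\mathrm{Hom}(\pi_1(S),G)$. The one presentational difference is that you construct a deformed boundary map $\zeta^\varepsilon$ first and read the representation off equivariance, whereas the paper defines the representation directly and only afterwards identifies its boundary map (Theorem~\ref{thm::boundary_map}); since you ultimately close by stability, the extra properties of $\zeta^\varepsilon$ you list at this stage (injectivity, transversality, dynamics-preservation) are not actually used and come for free once Anosov-ness of $\Lambda^\varepsilon\rho$ is established. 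One of those interim claims should in any case be withdrawn: that $\zeta^\varepsilon$ is ``spanning enough that a $G$-element intertwining two of its translates is unique up to the center'' is false in general for $\theta$-Anosov representations — for a reducible example such as the $(n,k)$-horocyclic representations of Section~\ref{sec::horocyclic}, the centralizer $\exp(\mathfrak{a}')$ of the image fixes every flag in the image of $\zeta$, so uniqueness fails. Your proof survives because you also give the explicit shear-cocycle formula, which is exactly what the paper uses and does not require any such uniqueness.
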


The representation $\Lambda^\varepsilon \rho$ is called \emph{$\varepsilon$-cataclysm deformation along $\lambda$ based at $\rho$}.
It depends on the choice of a reference component $P \subset \tilde{S} \setminus \tilde{\lambda}$ and for different choices of $P$, the resulting representations are conjugated. 

For every $\gamma \in \pi_1(S)$, the deformed representation $\Lambda^\varepsilon \rho$ is given by
\begin{align*}
\Lambda^\varepsilon \rho (\gamma) = \varphi^\varepsilon_{P(\gamma P)} \rho(\gamma).
\end{align*}
Here, $P \subset \tilde{S} \setminus \tilde{\lambda}$ is the fixed reference component, $Q$ another component and $\varphi_{PQ}^\varepsilon$ is the \emph{shearing map} between $P$ and $Q$ with respect to the shearing parameter $\varepsilon \in \Htwist$.
For the case of Teichm\"uller space, $\varphi_{PQ}^\varepsilon$ is obtained from a concatenation of hyperbolic isometries that act as stretches along geodesics in the lamination that separate $P$ and $Q$.
In the general case, it is obtained from a concatenation of \emph{stretching maps} $T^H_g$, where $H \in \atheta$ and $g$ is an oriented geodesic between $P$ and $Q$ and $H \in \atheta$. 
We define stretching and shearing maps in Section \ref{sec::stretching_maps} and \ref{sec::cataclysms}, respectively. \\

One way of describing a cataclysm deformation is through the deformation of the associated boundary map.
\begin{introtheorem}[Theorem \ref{thm::boundary_map}]
\label{thm::boundary_intro}
If $\zeta \colon \partial_\infty \tilde{S} \to \mathcal{F}_\theta$ is the boundary map for $\rho$ and $\rho' = \Lambda^\varepsilon \rho$ is the $\varepsilon$-cataclysm deformation of $\rho$ along $\lambda$, then the boundary map $\zeta'$ for $\rho'$ is given by
\begin{align*}
\zeta'(x) = \varphi^\varepsilon_{P Q_x} \cdot \zeta(x),
\end{align*}
where $x$ is a boundary point of the lamination $\tilde{\lambda}$ and $Q_x$ is a component of $\tilde{S} \setminus \tilde{\lambda}$ having $x$ as a vertex.
\end{introtheorem}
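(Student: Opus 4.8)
The plan is to verify that the candidate map $\zeta' \colon x \mapsto \varphi^\varepsilon_{P Q_x} \cdot \zeta(x)$ is well-defined on the set of endpoints of leaves of $\tilde\lambda$, that it is $\rho'$-equivariant, and that it satisfies the dynamical properties (transversality and $\theta$-divergence, or whatever characterization of Anosov boundary maps was set up earlier) that pin down the boundary map of a $\theta$-Anosov representation uniquely. Since Theorem A already provides that $\rho' = \Lambda^\varepsilon\rho$ is $\theta$-Anosov for $\varepsilon$ small, it has a unique boundary map $\zeta'$, and it suffices to check that the displayed formula defines a map with the required properties; uniqueness then forces it to be \emph{the} boundary map.

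First I would address well-definedness. A point $x$ that is an endpoint of a leaf of $\tilde\lambda$ may be a vertex of more than one component $Q_x$ of $\tilde S\setminus\tilde\lambda$; in fact it is typically a vertex of infinitely many components, all "on the same side" along leaves accumulating at $x$. The key point is that the shearing maps $\varphi^\varepsilon_{P Q_x}$ for these different choices of $Q_x$ all agree when applied to $\zeta(x)$: passing from one such component to an adjacent one across a leaf $g$ having $x$ as an endpoint composes $\varphi^\varepsilon_{P Q_x}$ with a stretching map $T^H_g$, and the stretching map $T^H_g$ fixes the flag $\zeta(x)$ because $x$ is an endpoint of $g$ and, by construction (Section~\ref{sec::stretching_maps}), $T^H_g$ fixes the two flags associated to the endpoints of $g$. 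Summing/concatenating over a chain of leaves accumulating at $x$, together with continuity and the convergence properties of the shearing maps near $x$, gives that $\varphi^\varepsilon_{P Q_x}\cdot\zeta(x)$ is independent of the choice of $Q_x$. This is the step I expect to be the main obstacle: controlling the infinite concatenation of stretching maps as the leaves accumulate at $x$, and showing the relevant products converge and stabilize on the flag $\zeta(x)$ — essentially this is where the transverse-cycle finiteness/additivity and the Anosov contraction estimates have to be combined carefully, and it is presumably where a nontrivial lemma from the preceding sections gets used.

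Next I would check equivariance. For $\gamma\in\pi_1(S)$ and $x$ an endpoint of a leaf of $\tilde\lambda$, one computes
\begin{align*}
\zeta'(\gamma x) &= \varphi^\varepsilon_{P\, Q_{\gamma x}}\cdot \zeta(\gamma x) = \varphi^\varepsilon_{P\, \gamma Q_x}\cdot \rho(\gamma)\,\zeta(x),
\end{align*}
using that $\gamma Q_x$ is a legitimate choice of component with $\gamma x$ as a vertex and the equivariance $\zeta(\gamma x)=\rho(\gamma)\zeta(x)$. Then the cocycle-type identity for shearing maps — the same one underlying the formula $\Lambda^\varepsilon\rho(\gamma)=\varphi^\varepsilon_{P(\gamma P)}\rho(\gamma)$ in Theorem~A, namely $\varphi^\varepsilon_{P\,\gamma Q_x}=\varphi^\varepsilon_{P\,\gamma P}\cdot\rho(\gamma)\,\varphi^\varepsilon_{P\,Q_x}\,\rho(\gamma)^{-1}$ (equivariance of the shearing construction under the $\pi_1(S)$-action on $\tilde S\setminus\tilde\lambda$) — yields $\zeta'(\gamma x)=\varphi^\varepsilon_{P\,\gamma P}\rho(\gamma)\cdot\varphi^\varepsilon_{P\,Q_x}\zeta(x) = \Lambda^\varepsilon\rho(\gamma)\cdot\zeta'(x)$, which is exactly $\rho'$-equivariance.

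Finally, transversality and continuity/divergence. Since each $\varphi^\varepsilon_{P Q_x}\in G$ and $\zeta$ sends distinct endpoints of $\tilde\lambda$ to transverse flags, for a single component $Q$ the map $\varphi^\varepsilon_{PQ}\cdot\zeta(\cdot)$ is transverse on the vertices of $Q$; for two endpoints $x,y$ lying in different components one uses the stabilization from the well-definedness step to put them in a common frame and again invokes transversality of $\zeta$ together with the fact that the stretching maps preserve the relevant transversality (this should be a property recorded when stretching maps are defined). Continuity of $x\mapsto\varphi^\varepsilon_{PQ_x}$ on the endpoint set — again from the convergence estimates in the shearing construction — gives continuity of $\zeta'$, and one extends it to all of $\partial_\infty\tilde S$ by the usual density/uniform-continuity argument, matching the extension that the Anosov property of $\rho'$ from Theorem~A already guarantees exists. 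By uniqueness of the boundary map of a $\theta$-Anosov representation, this $\zeta'$ is the boundary map of $\rho'$, proving the formula.
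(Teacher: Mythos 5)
Your proposal takes a genuinely different, and ultimately incomplete, route. The paper's proof hinges on one key observation that your proposal never isolates: if $x$ is a vertex of the \emph{reference} component $P$, then the cataclysm does not move the flag at $x$, i.e.\ $\zeta'(x)=\varphi_{PP}^\varepsilon\cdot\zeta(x)=\zeta(x)$. This is the analytically hard step of the whole theorem; the paper proves it (in the thesis version, Proposition~6.8) first for $\mathrm{SL}(n,\mathbb{R})$ using the linear-algebraic estimate of~\cite[Lemma~A.6]{BPS}, and then for general $G$ by composing with an irreducible representation to reduce to the projective case. Once this single fact is in hand, the formula for an arbitrary vertex $x$ of $Q_x$ falls out of a clean reference-change trick: $\Lambda^\varepsilon_{Q_x}\rho$ is conjugate to $\Lambda^\varepsilon_{P}\rho$ by $\varphi^\varepsilon_{PQ_x}$, so $\zeta'=\varphi^\varepsilon_{PQ_x}\cdot\zeta'_x$, and $\zeta'_x(x)=\zeta(x)$ by the key observation. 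No verification of continuity, transversality or dynamics of a candidate map is needed, because one works with the known boundary map $\zeta'$ throughout.

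Your route instead constructs a candidate map and proposes to pin it down by uniqueness. This requires verifying that the candidate (after an extension to all of $\partial_\infty\tilde S$) is continuous \emph{and dynamics-preserving} for $\rho'$ --- only then does the uniqueness clause in Definition~\ref{def::Anosov} apply. Dynamics-preserving is never addressed in your sketch, and the continuity/extension step is waved off as ``controlling the infinite concatenation'' with an appeal to ``a nontrivial lemma from the preceding sections,'' but no such lemma appears in the preceding sections: the required control is exactly the BPS-type estimate that the paper imports and which constitutes the core of the proof. The well-definedness and equivariance arguments you give (stretching maps along a leaf $g$ with endpoint $x$ stabilize $\zeta(x)$; the cocycle identity for $\varphi^\varepsilon_{P\gamma Q_x}$) are correct and match the ``short computation'' the paper alludes to, but they are the easy part. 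In short, your approach would in principle work, but it is strictly harder than the paper's (more properties to verify), and the genuinely difficult step is left as a placeholder rather than carried out; the paper's reference-change structure sidesteps exactly that extra work.
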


\begin{remark}
For the special case of $\Delta$-Anosov representations into $\mathrm{PSL}(n, \mathbb{R})$, Theorems \ref{thm::cataclysm_intro} and \ref{thm::boundary_intro} were proven by Dreyer in \cite{Dreyer_Cataclysms}.
The difference between their and our more general construction for $\theta$-Anosov representations lies in the definition of the parameter space and in the definition of the basic building blocks, the stretching maps.
Further, we consider an arbitrary geodesic lamination $\lambda$, whereas Dreyer's result is for maximal laminations only.
\end{remark}

The first step in the construction of cataclysm deformations is the definition of the parameter space $\Htwist$. 
For the special case that the lamination $\lambda$ is maximal, i.e.\ the complement $\tilde{S} \setminus \tilde{\lambda}$ consists of ideal triangles, we compute that the dimension of $\Htwist$ is 
\begin{align*}
|\theta|(6g(S)-6) + |\theta'|,
\end{align*}
where $\theta' \subset \theta$ is a maximal subset satisfying $\theta' \cap \iota(\theta') = \emptyset$ (see Corollary \ref{cor::dim_Htwist_maximal}).
In the case $G = \mathrm{PSL}(n,\mathbb{R})$ and $\theta = \Delta$, this recovers a result from Dreyer (\cite[Lemma 16]{Dreyer_Cataclysms}).
We determine the dimension of $\Htwist$ more generally for a geodesic lamination $\lambda$ that is not necessarily maximal (Proposition \ref{prop::dim_Htwist}). \\

Having defined cataclysms, we study their properties.
The first observation is that the cataclysm is additive, i.e.\ satisfies a local flow condition.

\begin{introtheorem}[Theorem \ref{thm::additivity}]
The cataclysm deformation is additive in the sense that for two twisted cycles $\varepsilon, \eta \in \Htwist$ sufficiently small,
\begin{align*}
\Lambda^{\varepsilon+\eta} \rho = \Lambda^\varepsilon \left( \Lambda^\eta \rho \right).
\end{align*}
\end{introtheorem}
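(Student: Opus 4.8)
The plan is to reduce the asserted identity to a statement purely about shearing maps, and then to prove that statement by unwinding the shearing map into its constituent stretching maps. First I would fix once and for all the reference plaque $P\subset\tilde{S}\setminus\tilde{\lambda}$ used for all the cataclysms in sight, and set $\rho':=\Lambda^\eta\rho$. By Theorem~\ref{thm::cataclysm_deformation_hom} and continuity of $\eta\mapsto\Lambda^\eta\rho$, for $\varepsilon$ and $\eta$ small enough $\rho'$ is again $\theta$-Anosov, $\varepsilon$ lies in the neighbourhood $\mathcal{V}_{\rho'}$ on which the cataclysm of $\rho'$ is defined, and $\varepsilon+\eta\in\mathcal{V}_\rho$, so all three representations make sense. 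Writing $\varphi^\varepsilon_{PQ}$ for the shearing maps of $\rho$ and $\psi^\varepsilon_{PQ}$ for those of $\rho'$ (built from the boundary map $\zeta'$ of $\rho'$), for $\gamma\in\pi_1(S)$ and $Q:=\gamma P$ one has $\Lambda^\varepsilon(\Lambda^\eta\rho)(\gamma)=\psi^\varepsilon_{PQ}\,\varphi^\eta_{PQ}\,\rho(\gamma)$ and $\Lambda^{\varepsilon+\eta}\rho(\gamma)=\varphi^{\varepsilon+\eta}_{PQ}\,\rho(\gamma)$, so it suffices to prove
\[
\psi^\varepsilon_{PQ}\,\varphi^\eta_{PQ}=\varphi^{\varepsilon+\eta}_{PQ}
\qquad\text{for every plaque } Q .
\]

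Next I would rewrite the shearing maps of $\rho'$ in terms of those of $\rho$. Enumerate the leaves of $\tilde{\lambda}$ separating $P$ from $Q$ as $g_1,g_2,\dots$ going from $P$ towards $Q$, and let $R_k$ be the plaque on the $P$-side adjacent to $g_k$, so that both endpoints of $g_k$ are vertices of $R_k$ and, by definition of the shearing map as a concatenation of stretching maps along the separating leaves (Section~\ref{sec::cataclysms}), $\varphi^\eta_{PR_k}$ is exactly the initial segment of $\varphi^\eta_{PQ}$ corresponding to $g_1,\dots,g_{k-1}$. By Theorem~\ref{thm::boundary_map} applied with $Q_{g_k^{\pm}}=R_k$, the ordered pair of endpoint flags of $g_k$ for $\rho'$ is the image under $\varphi^\eta_{PR_k}$ of the one for $\rho$; since the stretching maps of Section~\ref{sec::stretching_maps} are built $G$-equivariantly from that ordered pair, this gives $(T')^{H}_{g_k}=\varphi^\eta_{PR_k}\,T^{H}_{g_k}\,(\varphi^\eta_{PR_k})^{-1}$. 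Moreover the evaluation of a twisted transverse cycle on a transverse arc is linear and $H\mapsto T^{H}_{g_k}$ is a homomorphism $\atheta\to G$, so the $\atheta$-weight attached along $g_k$ by $\varepsilon+\eta$ is the sum of those attached by $\varepsilon$ and $\eta$, and $T^{H_1+H_2}_{g_k}=T^{H_1}_{g_k}T^{H_2}_{g_k}$.

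Denoting by $E_k$, respectively $N_k$, the stretching map along $g_k$ with the weight coming from $\varepsilon$, respectively $\eta$, the above gives, with all products taken in the order $k=1,2,\dots$ dictated by the conventions of Section~\ref{sec::cataclysms},
\[
\psi^\varepsilon_{PQ}=\prod_k (N_1\cdots N_{k-1})\,E_k\,(N_1\cdots N_{k-1})^{-1},
\qquad
\varphi^\eta_{PQ}=\prod_k N_k,
\qquad
\varphi^{\varepsilon+\eta}_{PQ}=\prod_k E_k N_k .
\]
For a lamination with finitely many leaves separating $P$ from $Q$ the identity $\psi^\varepsilon_{PQ}\varphi^\eta_{PQ}=\varphi^{\varepsilon+\eta}_{PQ}$ is then a one-line telescoping, using the cancellations $(N_1\cdots N_{k-1})^{-1}(N_1\cdots N_k)=N_k$; in general one obtains it by passing to the limit in the defining concatenation of stretching maps — an integral over the leaves separating $P$ from $Q$ — for the shearing map, which is legitimate for $\varepsilon,\eta$ small by the same convergence estimates already used to construct $\varphi^\varepsilon_{PQ}$ in Section~\ref{sec::cataclysms}. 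Together with the reduction above this proves the theorem.

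The main obstacle will be the bookkeeping in the last two steps rather than any new idea: one has to check carefully that along each separating leaf $g_k$ the two endpoint flags of $\rho'$ are moved from those of $\rho$ by one and the \emph{same} group element $\varphi^\eta_{PR_k}$ — which is precisely why one takes the common plaque $R_k$ in Theorem~\ref{thm::boundary_map} — and that the conjugate-then-rearrange identity survives the passage from finite products to the infinite concatenation uniformly for small $\varepsilon,\eta$. Both points should be controlled by the estimates already in place from the construction of $\Lambda^\varepsilon$ and from Theorem~\ref{thm::boundary_map}, so I do not anticipate a genuine difficulty, only care.
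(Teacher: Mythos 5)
Your reduction to the shearing-map identity $\varphi'^{\varepsilon}_{PQ}\,\varphi^{\eta}_{PQ}=\varphi^{\varepsilon+\eta}_{PQ}$, the conjugation relation $(T')^{H}_{g}=\varphi^{\eta}_{PQ}\,T^{H}_{g}\,(\varphi^{\eta}_{PQ})^{-1}$ obtained from Theorem~\ref{thm::boundary_map}, and the finite telescoping followed by a limit are exactly the route the paper takes in Proposition~\ref{prop::additivity_shearing}. The bookkeeping you defer is genuinely where the work lies: the separating leaves of a lamination typically form a Cantor set and cannot be enumerated as $g_1,g_2,\dots$, so the paper instead runs the truncation over finite sets $\mathcal{C}\subset\mathcal{C}_{PQ}$ of complementary \emph{components} $R$, each contributing a pair $T_{g_R^0}^{\,\cdot}\,T_{g_R^1}^{\,\cdot}$, and controls the error against $\psi^{\varepsilon+\eta}_{\mathcal{C}}$ via the divergence-radius estimates of Lemma~\ref{lem::cycle_estimate} and Corollary~\ref{cor::adjacent_stretches}.
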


The second observation is about how a cataclysm behaves with respect to composition of an Anosov representation with a group homomorphism.
Let $\kappa \colon G \to G'$ be a group homomorphism between semisimple Lie groups and denote all objects associated with $G'$ with a prime.
Under assumptions on $\kappa$ and $\theta' \subset \Delta'$, $\kappa \circ \rho$ is $\theta'$-Anosov by a result from Guichard and Wienhard (\cite[Proposition 4.4]{GuichardWienhard_Anosov}). 

\begin{introprop}[Theorem \ref{prop::cataclysm_equivariant}]
\label{thm::equivariant_intro}
If $\kappa \colon G \to G'$ is a group homomorphism between semisimple Lie groups such that $\kappa \circ \rho$ is Anosov, then under the additional assumption that $\kappa_*(\atheta) \subset \mathfrak{a}'_{\theta'}$, we have
\begin{align*}
\Lambda^{\kappa_* \varepsilon} (\kappa \circ \rho) = \kappa \left( \Lambda^\varepsilon \rho\right),
\end{align*}
where $\kappa_* \colon \mathfrak{a} \to \mathfrak{a}'$ is the map between the maximal abelian subalgebras $\mathfrak{a} \subset \mathfrak{g}$ and $\mathfrak{a}' \subset \mathfrak{g}'$ induced by $\kappa$.
\end{introprop}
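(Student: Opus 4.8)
The plan is to reduce the identity to a single naturality statement for the basic building blocks --- the stretching maps --- and then to propagate it through the concatenations that define the shearing maps. Throughout, write $\kappa$ also for the $\kappa$-equivariant map $\mathcal{F}_\theta \to \mathcal{F}'_{\theta'}$ coming with the hypotheses of \cite[Proposition 4.4]{GuichardWienhard_Anosov}, so that the boundary map of $\kappa\circ\rho$ is $\kappa\circ\zeta$. For $\gamma\in\pi_1(S)$ the two sides unwind, using the explicit formula for the cataclysm and $\kappa(\rho(\gamma))=(\kappa\circ\rho)(\gamma)$, to
\[
\Lambda^{\kappa_*\varepsilon}(\kappa\circ\rho)(\gamma) = {\varphi'}^{\,\kappa_*\varepsilon}_{P(\gamma P)}\,(\kappa\circ\rho)(\gamma), \qquad \kappa\bigl(\Lambda^\varepsilon\rho\bigr)(\gamma) = \kappa\bigl(\varphi^\varepsilon_{P(\gamma P)}\bigr)\,(\kappa\circ\rho)(\gamma),
\]
where ${\varphi'}^{\,\eta}_{PQ}$ denotes the shearing map built in $G'$ from the boundary map $\kappa\circ\zeta$ and an $\mathfrak{a}'_{\theta'}$-valued twisted cycle $\eta$; this makes sense since $\kappa_*(\atheta)\subset\mathfrak{a}'_{\theta'}$ by hypothesis, so $\kappa_*\varepsilon$ has values in $\mathfrak{a}'_{\theta'}$, and it is again twisted provided $\kappa_*$ intertwines the two opposition involutions, which should follow from the standing assumptions on $\kappa$. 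Cancelling $(\kappa\circ\rho)(\gamma)$, the statement reduces to $\kappa\bigl(\varphi^\varepsilon_{PQ}\bigr) = {\varphi'}^{\,\kappa_*\varepsilon}_{PQ}$ for every pair of components $P,Q\subset\tilde S\setminus\tilde\lambda$.

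Next, by the construction in Section \ref{sec::cataclysms}, $\varphi^\varepsilon_{PQ}$ is a concatenation --- in general a limit of finite ones --- of stretching maps $T^{H_i}_{g_i}$, where the $g_i$ are the oriented geodesics of $\tilde\lambda$ separating $P$ from $Q$ and the $H_i\in\atheta$ are obtained from $\varepsilon$ evaluated on the relevant transverse arcs. Since $\kappa$ is a group homomorphism it distributes over concatenations, and being continuous it passes to the limit; moreover, by definition of the pushforward cycle, the value of $\kappa_*\varepsilon$ on any transverse arc is $\kappa_*$ applied to the value of $\varepsilon$, so the $i$-th stretching parameter becomes $\kappa_*(H_i)$. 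Hence it suffices to prove the naturality of stretching maps,
\[
\kappa\bigl(T^{H}_{g}\bigr) = {T'}^{\,\kappa_* H}_{g},
\]
where the right-hand side is the stretching map in $G'$ along $g$ built from the transverse pair of flags $\bigl(\kappa(\zeta(g^+)),\,\kappa(\zeta(g^-))\bigr)$ and parameter $\kappa_* H$.

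This identity is the core of the argument, and its proof should unpack the definition of $T^H_g$ from Section \ref{sec::stretching_maps}: $T^H_g$ is obtained by placing $\exp(H)$ into the distinguished torus --- more precisely, its $\atheta$-part --- attached to the transverse pair $(\zeta(g^+),\zeta(g^-))$. Three ingredients are needed: (i) $\kappa$ maps transverse pairs of $\theta$-flags to transverse pairs of $\theta'$-flags, so that the right-hand stretching map is defined --- this transversality is part of, or an immediate consequence of, the Guichard--Wienhard compatibility; (ii) $\kappa$ carries the $\atheta$-part attached to $(\zeta(g^+),\zeta(g^-))$ into the one attached to the image pair, since this datum is intrinsic to the pair of flags; and (iii) on Lie algebras this identification is implemented precisely by $\kappa_*$, so that functoriality of $\exp$ gives $\kappa(\exp H)=\exp(\kappa_*H)$ with $\kappa_*H\in\mathfrak{a}'_{\theta'}$ by hypothesis. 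Combining (i)--(iii) with the orientation bookkeeping coming from $\olambda$ yields the identity, hence the theorem; one finally observes that $\Lambda^{\kappa_*\varepsilon}(\kappa\circ\rho)$ is again $\theta'$-Anosov by Theorem \ref{thm::cataclysm_intro} applied to $G'$, so this is an equality of Anosov representations.

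The main obstacle I expect is making (ii)--(iii) precise: one must check that the assignment sending a transverse pair of flags to its stretching datum is genuinely natural in $\kappa$, i.e.\ that the normalizations used to define $T^H_g$ in $G$ and in $G'$ --- the choice of reference flag, the ordering of the two ideal endpoints, and the precise embedding $\atheta\hookrightarrow\mathfrak{g}$ determined by the flag pair --- are matched by $\kappa_*$ exactly, not merely up to conjugation or up to a centralizer. The remaining points are routine: the twistedness of $\kappa_*\varepsilon$ (compatibility of $\kappa_*$ with the opposition involutions) and the fact that $\kappa_*$ sends a sufficiently small $\mathcal{V}_\rho$ into $\mathcal{V}_{\kappa\circ\rho}$, which is immediate from linearity and continuity of $\kappa_*$ and from both neighborhoods being neighborhoods of $0$.
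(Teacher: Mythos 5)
Your proposal follows the same route as the paper: invoke Guichard--Wienhard's Proposition 4.4 to identify the boundary map of $\kappa\circ\rho$ with $\kappa^+\circ\zeta$, deduce naturality of the stretching maps $\kappa(T^H_g)={T'}^{\kappa_*H}_g$, pass this through the (limits of) concatenations defining the shearing maps, and conclude via the defining formula for the cataclysm. The paper's proof is terser, but the chain of reductions is identical.
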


Finally, we address the question if the cataclysm deformation is injective, i.e.\ if different twisted cycles result in different deformed representations. 

\begin{introprop}[Proposition \ref{prop::shearing_injective}]
\label{prop::shearing_injective_intro}
The map 
\begin{align*}
\mathcal{V}_\rho &\to G^{ \{(P,Q) | P,Q \subset \tilde{S} \setminus \tilde{\lambda} \} } \\
\varepsilon &\mapsto \{\varphi^\varepsilon_{PQ} \}_{(P,Q)}
\end{align*}
that assigns the family of shearing maps to a transverse twisted cycle is injective. 
Here, $\mathcal{V}_\rho \subset \Htwist$ is the neighborhood of $0$ from Proposition \ref{prop::shearing_map}.
\end{introprop}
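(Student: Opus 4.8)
The plan is to prove injectivity by reconstructing a twisted transverse cycle explicitly from its family of shearing maps. Suppose $\varepsilon,\eta\in\mathcal{V}_\rho$ satisfy $\varphi^\varepsilon_{PQ}=\varphi^\eta_{PQ}$ for every pair of components $P,Q\subset\tilde{S}\setminus\tilde{\lambda}$; the goal is to conclude $\varepsilon=\eta$. An element of $\Htwist$ is a finitely additive, homotopy-invariant assignment of an element of $\atheta$ to each oriented transverse arc, and the two endpoints of any transverse arc can be pushed into complementary components of $\tilde{\lambda}$ without changing the assigned value; so it suffices to show $\varepsilon(k)=\eta(k)$ for arcs $k$ with endpoints in two components $P$ and $Q$, which I write as $k_{PQ}$. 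Using finite additivity, together with the fact that $\Htwist$ is finite-dimensional (Proposition~\ref{prop::dim_Htwist}), it is enough to do this for a fixed finite family of such arcs, so the problem reduces to recovering the values of $\varepsilon$ on transverse arcs between complementary components from the family of shearing maps.

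The reconstruction rests on the description of the shearing map in Section~\ref{sec::cataclysms}: $\varphi^\varepsilon_{PQ}$ is the ordered concatenation — a limit of finite products — of stretching maps $T^{H}_{g}$ along the leaves $g$ of $\tilde{\lambda}$ separating $P$ from $Q$, with the parameter at each leaf determined by the restriction of $\varepsilon$ to a subarc of $k_{PQ}$, and the shearing maps satisfy the cochain relation $\varphi^\varepsilon_{PR}=\varphi^\varepsilon_{QR}\,\varphi^\varepsilon_{PQ}$ whenever $Q$ lies between $P$ and $R$. In the model situation where $P$ and $Q$ are separated by a single leaf $g$ the concatenation collapses to $\varphi^\varepsilon_{PQ}=T^{\varepsilon(k_{PQ})}_{g}$, and for a general lamination one reaches this situation in the limit by letting $Q$ range over complementary components converging to a boundary leaf $g$ of $P$, so that the stretching maps along the leaves strictly between $g$ and $Q$ accumulate to $\Id$ — here using the continuity of the shearing map (Proposition~\ref{prop::shearing_map}) together with the regularity built into the definition of $\Htwist$, which forces the value of $\varepsilon$ on the shrinking arcs between $g$ and $Q$ to tend to $0$. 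By the definition of the stretching maps in Section~\ref{sec::stretching_maps}, $T^{H}_{g}$ depends on $H$ through $\exp$ of (a fixed linear image of) $H$, conjugated by elements depending only on $g$ and the boundary map; since we may take $\mathcal{V}_\rho$ small enough that the exponential of $\atheta$ is injective on it, the map $H\mapsto T^{H}_{g}$ is injective on $\mathcal{V}_\rho$. Hence $\varphi^\varepsilon_{PQ}=\varphi^\eta_{PQ}$ forces $\varepsilon(k_{PQ})=\eta(k_{PQ})$; applying this, via the cochain relation, finite additivity, and a limiting argument, to the finite generating family of arcs from the first paragraph gives $\varepsilon=\eta$.

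The step I expect to be the main obstacle is precisely this extraction of the local value of $\varepsilon$ from $\varphi^\varepsilon_{PQ}$ for a general lamination: when no two complementary regions of $\tilde{\lambda}$ share a leaf, $\varphi^\varepsilon_{PQ}$ is a genuinely infinite ordered product of stretching maps, and one must control the tail of stretching maps accumulating on a leaf $g$ and show that the error it introduces into the reconstruction vanishes in the limit, uniformly enough to identify the limiting stretching parameter with a value of $\varepsilon$. This is where the continuity statement of Proposition~\ref{prop::shearing_map} and the regularity of elements of $\Htwist$ do the real work; the remaining ingredient, injectivity of $H\mapsto T^{H}_{g}$ near $0$, follows formally from the explicit form of the stretching maps in Section~\ref{sec::stretching_maps}.
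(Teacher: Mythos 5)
Your proposal takes a genuinely different route from the paper, and it has a real gap at the limiting step. The paper reconstructs $\varepsilon(P,Q)$ directly from the family $\{\varphi^\varepsilon_{PR}\}_R$ via the Busemann cocycle $\sigma_\theta \colon G \times \mathcal{F}_\theta \to \atheta$, by defining a telescoping sum
\[
\delta(\tilde{k}) = \sum_{R \in \mathcal{C}_{PQ}} \Bigl(\Buse{\varphi^\varepsilon_{PR}}{P_{g_R^0}^+}- \Buse{\varphi^\varepsilon_{PR}}{P_{g_R^1}^+}\Bigr)
- \Buse{\Id}{P_{g_P^1}^+} + \Buse{\varphi^\varepsilon_{PQ}}{P_{g_Q^0}^+}
\]
which is visibly a function of the shearing maps and the boundary flags alone, and then proving $\delta(\tilde{k}) = \varepsilon(P,Q)$ using the cocycle identity, equation \eqref{eq::Busemann_basic}, and the estimates from Lemmas \ref{lem::psi_P_estimate} and \ref{lem::divergence_radius}. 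The key advantage is that the Busemann cocycle linearizes the infinite product: after applying $\sigma_\theta$ and telescoping, the contributions of the $\psi$-terms can be shown to vanish in the limit, and $\varepsilon(P,Q)$ falls out exactly, with no circularity.

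Your argument, by contrast, tries to isolate the terminal factor $T^{\varepsilon(P,Q)}_{g_Q^0}$ inside the limit $\varphi^\varepsilon_{PQ} = \lim_{\mathcal{C}} \psi^\varepsilon_{\mathcal{C}} T^{\varepsilon(P,Q)}_{g_Q^0}$, and this is where the gap lies. First, for a lamination in which boundary leaves of complementary regions are accumulation points of other leaves (e.g.\ a minimal filling lamination), no two complementary components share a leaf, so the model case you reduce to never occurs and the entire weight falls on the limiting step. Second, the limiting device changes the pair under consideration: letting $Q_n \to g$ gives information about $\lim_n \varepsilon(P,Q_n)$, assuming that limit exists, but does not recover $\varepsilon(P,Q)$ for a fixed pair $(P,Q)$; to peel off the last factor of $\varphi^\varepsilon_{PQ}$ for a fixed $Q$ you would need to already know that the prefix $\psi^\varepsilon_{PQ}$ matches $\psi^\eta_{PQ}$, which is precisely what is in question. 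Third, the claim that the value of $\varepsilon$ on the shrinking arcs between $g$ and $Q_n$ tends to $0$ is not justified by the definition of $\Htwist$: Definition \ref{def::twisted_cycles} requires only finite additivity, $\lambda$-invariance and the twist condition, and the only quantitative control available (Lemma \ref{lem::cycle_estimate}) is an upper bound that \emph{grows} with divergence radius. Finally, even granting existence of the limits $\lim_n \varepsilon(P,Q_n)$, equality of all such limits for $\varepsilon$ and $\eta$ does not by itself force $\varepsilon = \eta$; there is no argument in the proposal that these boundary-leaf limits, together with finite additivity, determine the cycle. (One correct observation worth keeping: $\exp \colon \atheta \to A$ is a global diffeomorphism, so $H \mapsto T^H_g$ is injective without shrinking $\mathcal{V}_\rho$; but this only helps once the relevant factor has been cleanly isolated.)
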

For the special case $G= \mathrm{PSL}(n, \mathbb{R})$ and $\theta = \Delta$, Proposition \ref{prop::shearing_injective_intro} was observed by Dreyer in \cite[Section 5.1]{Dreyer_Cataclysms}.
One might now be lead to believe that also the cataclysm deformation is injective, which is wrong in general. 
In Subsection \ref{sec::horocyclic}, we consider a family of reducible representations into $\SL{n}$ which we call $(n,k)$-horocyclic and for which we can identify a subspace of $\Htwist$ on which the deformation is trivial.

However, under the assumption that for every connected component $Q \subset \tilde{S} \setminus \tilde{x}$, the intersection of the stabilizers of $\zeta(x)$ for all vertices $x$ of $Q$ is trivial, we show that the cataclysm deformation is injective (Corollary \ref{cor::stab_trivial_injective}).
This result is in particular true for Hitchin representations into $\SL{n}$ (Corollary \ref{cor::injective_SLn}).

\begin{remark}
The results in this paper were obtained in the author's PhD thesis \cite{Pfeil_Cataclysms}.
More detailed versions of the proofs can be found therein.
\end{remark}

\subsection*{Acknowledgments}

I thank my advisors Anna Wienhard and Beatrice Pozzetti for their continuous guidance and support throughout my work on this project.
Further, I thank Francis Bonahon, Guillaume Dreyer, James Farre, Xenia Flamm and Max Riestenberg for helpful discussions and comments, and the anonymous referee for constructive comments.

The author acknowledges funding from the Klaus Tschira Foundation and the Heidelberg Institute for Theoretical Studies, from the Deutsche Forschungs\-gem\-ein\-schaft (DFG, German Research Foundation) – 281869850 (RTG 2229) and under Germany’s Excellence Strategy EXC2181/1-390900948 (the Heidelberg STRUCTURES Excellence Cluster), and from  the U.S. National Science Foundation under Grant No. DMS-1440140 while the author was in residence at the Mathematical Sciences Research Institute in Berkeley, California, during the Fall 2019 semester, and DMS 1107452, 1107263, 1107367 "RNMS: Geometric Structures and Representation Varieties" (the GEAR Network) for a Graduate Internship at the University of Southern California, Los Angeles.

This version of the article has been accepted for publication in \emph{Geometriae Dedicata}, after peer review, but is not the Version of Record and does not reflect post-acceptance improvements, or any
corrections. The Version of Record is available online at: \href{http://dx.doi.org/10.1007/s10711-022-00721-7}{http://dx.doi.org/10.1007/s10711-022-00721-7}.

\section{Preliminaries}

\subsection{Parabolic subgroups of semisimple Lie groups}

In this section, we introduce the notation that we use throughout the paper and give the definition of $\theta$-Anosov representations. 
It is based on \cite[Section 3.2]{GuichardWienhard_Anosov} and \cite[Section 2.2]{GGKW}.

Let $G$ be a connected non-compact semisimple real Lie group and let $\mathfrak{g}$ be its Lie algebra. 
Choose a maximal compact subgroup $K \subset G$. 
Let $\mathfrak{k} \subset \mathfrak{g}$ be the Lie algebra of $K$ and $\mathfrak{p}$ its orthogonal complement with respect to the Killing form.  
Let $\mathfrak{a}$ be a maximal abelian subalgebra in $\mathfrak{p}$ and $\mathfrak{a}^*$ its dual space.
For $\alpha \in \mathfrak{a}^*$, we define 
 \begin{align*}
\mathfrak{g}_\alpha =\{X \in \mathfrak{g} \ | \ \forall H \in \mathfrak{a}: \ \mathrm{ad}_H(X) = \alpha(H)X \},
\end{align*}
where $\mathrm{ad} \colon \mathfrak{g} \to \mathfrak{gl}(\mathfrak{g})$ is the adjoint representation of the Lie algebra $\mathfrak{g}$.
If $\mathfrak{g}_\alpha \neq 0$, we say that $\alpha$ is a \emph{restricted root} with associated \emph{root space} $\mathfrak{g}_\alpha$. 
Let $\Sigma  \subset \mathfrak{a}^* \setminus \{0\}$ be the set of restricted roots. 
Let $\Delta \subset \Sigma$ be a simple system, i.e.\ a subset of $\Sigma$ such that every element in $\Sigma$ can be expressed uniquely as a linear combination of elements in $\Delta$ where all coefficients are either non-negative or non-positive (see \cite[§II.6]{Knapp}). 
The elements of $\Delta$ are called \emph{simple roots}.
Denote by $\Sigma^+$ the set of \emph{positive roots}, namely the set of all elements in $\Sigma$ which have non-negative coefficients with respect to the generating set $\Delta$, and by $\Sigma^-$ the \emph{negative roots}. 

Define the subalgebras
\begin{align*}
\mathfrak{n}^+ = \bigoplus_{\alpha \in \Sigma^+} \mathfrak{g}_\alpha, \qquad \mathfrak{n}^- = \bigoplus_{\alpha \in \Sigma^+} \mathfrak{g}_{-\alpha }
\end{align*}
and consider the corresponding subgroups  $N^+ = \exp(\mathfrak{n}^+)$ and $N^- = \exp(\mathfrak{n}^-)$. 
Further, let $A := \exp(\mathfrak{a})$ and let $Z_K(\mathfrak{a})$ be the centralizer of $\mathfrak{a}$ in $K$.
The \emph{standard minimal parabolic subgroup} is the subgroup $B^+ := Z_K(\mathfrak{a})AN^+$, and in the same way, one defines $B^- := Z_K(\mathfrak{a})AN^-$. 
A subgroup of $G$ that is conjugate to $B^+$ is called \emph{minimal parabolic subgroup} or \emph{Borel subgroup}.
The group $N^+$ is the \emph{unipotent radical} of $B^+$, that is, the largest normal subgroup consisting of unipotent elements.
A subgroup $P \subset G$ is called \emph{parabolic} if it is  conjugate to a subgroup containing $B^+$. 
Parabolic subgroups can be classified by subsets of the set of simple roots $\Delta$ as follows:
Given a subset $\theta \subset \Delta$, let
\begin{align}
\label{eq::atheta}
\atheta := \bigcap_{\alpha \in \Delta \setminus \theta} \mathrm{ker}(\alpha) \ \subset \mathfrak{a},
\end{align}
and let $Z_K(\atheta)$ be its centralizer in $K$. 
A \emph{standard parabolic subgroup} is a subgroup of the form $P_\theta^\pm := Z_K(\atheta)AN^\pm$.  
These are the only parabolic subgroups containing $B^+$, and every parabolic subgroup $P$ is conjugate to some $P_\theta^+$ for a unique $\theta \subset \Delta$ \cite[Proposition 5.14]{BorelTits}.
Thus, conjugacy classes of parabolic subgroups are in one-to-one correspondence with subsets $\theta \subset \Delta$.
For example, for $\theta = \Delta$ we have $\mathfrak{a}_\Delta = \mathfrak{a}$, so $\Delta$ corresponds to the conjugacy class of the minimal parabolic subgroup $B^+$. 

Let $\Sigma_{\theta}^+$ denote the set of all elements in $\Sigma^+$ that do not belong to the span of $\Delta \setminus \theta$. 
Define the Lie subalgebras
\begin{align*}
\mathfrak{n}_\theta^+ = \bigoplus_{\alpha \in \Sigma_\theta^+} \mathfrak{g}_\alpha, \qquad \mathfrak{n}_\theta^- = \bigoplus_{\alpha \in \Sigma_\theta^+} \mathfrak{g}_{-\alpha} 
\end{align*}
and  $N_\theta^\pm := \exp(\mathfrak{n}_\theta^\pm)$. 
The group $N_\theta^\pm$ is the \emph{unipotent radical} of $P_\theta^\pm$, and 
the standard parabolic subgroups $P_\theta^\pm$ are equal to the semidirect product $L_\theta N_\theta^\pm$, where
$L_\theta := P_\theta^+ \cap P_\theta^-$ is the common \emph{Levi subgroup} of $P_\theta^+$ and $P_\theta^-$.

The quotient $\mathcal{F}_\theta = G / P_{\theta}^+$ is called \emph{flag manifold}. 
It is a compact $G$-homogeneous space and its elements are called \emph{flags}. 
The group $G$ acts on $\mathcal{F}_\theta$ by left-multiplication, which will be denoted by $(g,F) \mapsto g \cdot F$.
There is a one-to-one correspondence between $\mathcal{F}_\theta$ and the set of parabolic subgroups conjugate to $P_\theta^+$: When $F =  g \cdot P_\theta^+ \in \mathcal{F}_\theta$ is a flag, then its stabilizer is a parabolic subgroup conjugate to $P_\theta^+$. 
We will often make no distinction between elements in $\mathcal{F}_\theta$ and parabolic subgroups conjugate to $P_\theta^+$.

The \emph{Weyl group} of $G$ is 
\begin{align*}
W := N_K(\mathfrak{a}) / Z_K(\mathfrak{a}),
\end{align*}
where $N_K(\mathfrak{a})$ is the normalizer of $\mathfrak{a}$ in $K$.
Seen as a subgroup of $\mathrm{GL}(\mathfrak{a})$, it is a finite Coxeter group with system of generators given by the orthogonal reflections in the hyperplanes $\mathrm{ker}(\alpha) \subset \mathfrak{a}$ for $\alpha \in \Delta$.
The \emph{Weyl chambers} are the connected components of $\mathfrak{a} \setminus \bigcup_{\alpha \in \Delta} \mathrm{ker}(\alpha)$. 
A closed Weyl chamber is the closure of a Weyl chamber.
The set of positive roots $\Sigma^+$ singles out the \emph{closed positive Weyl chamber} of $\mathfrak{a}$ defined by
\begin{align*}
\overline{\mathfrak{a}^+} := \{ H \in \mathfrak{a} \ | \ \alpha(H) \geq 0 \ \forall \alpha \in \Sigma^+\}.
\end{align*}
The Weyl group $W$ acts simply transitively on the Weyl chambers and thus there exists a unique element $w_0 \in W$ such that $w_0( \overline{\mathfrak{a}^+}) = - \overline{\mathfrak{a}^+}$.

\begin{defi}
\label{def::opposition_involution}
The involution  on $\mathfrak{a}$ defined by $\iota := - w_0$ is called \emph{opposition involution}. 
It induces a dual map on $\mathfrak{a}^*$ which is also denoted by $\iota$ and defined by $\iota(\alpha) = \alpha \circ \iota$ for all $\alpha \in \Delta$.
\end{defi}

Consider the space $\mathcal{F}_\theta \times \mathcal{F}_{\iota(\theta)}$ of pairs of flags. 
There exists a unique open orbit for the diagonal action of $G$ on $\mathcal{F}_\theta \times \mathcal{F}_{\iota(\theta)}$ (see \cite[Section 2.1]{GuichardWienhard_Anosov}), and one representative of this orbit is the pair $(P_\theta^+, P_\theta^-)$.
Note that $P_{\theta}^-$ is an element of $\mathcal{F}_{\iota(\theta)}$ since $P_\theta^-$ is conjugate to $P_{\iota(\theta)}^+$ by $w_0$.
\begin{defi}
Two parabolic subgroups $P^+$ and $ P^-$ are \emph{transverse} if the pair $(P^+, P^-)$ lies in the unique open $G$-orbit of $\mathcal{F}_\theta \times \mathcal{F}_{\iota(\theta)}$ for some $\theta \subset \Delta$.
Given $P^+ \in \mathcal{F}_\theta$ and $P^- \in \mathcal{F}_{\iota(\theta)}$, we say that $P^+$ is \emph{transverse} to $P^-$ (and $P^-$ is transverse to $P^-$) if the pair $(P^+,P^-)$ is transverse.
\end{defi}
Note that the subset $\theta$ in this definition is already determined by the parabolic $P^+$, since every parabolic subgroup is conjugate to $P_\theta^+$ for a unique $\theta \subset \Delta$, and that the unipotent radical $N_\theta^+$ acts simply transitively on parabolics transverse to $P_\theta^+$.
In the following, we restrict our attention to subsets $\theta \subset \Delta$ for which $P_\theta^+$ and $P_\theta^-$ are conjugate. 
This is true if and only if $\iota(\theta) = \theta$.
\begin{ass}
For the rest of this paper, we assume that $\theta \subset \Delta$ is invariant under the opposition involution $\iota$, i.e.\ $\iota(\theta) = \theta$.
\end{ass}

\begin{ex}
\label{ex::SLn}
Consider the special linear group $G = \SL{n}$ with maximal compact subgroup $K =\mathrm{SO}(n, \mathbb{R})$.
The Lie algebra $\mathfrak{sl}(n, \mathbb{R})$ is given by all traceless $n \times n$ matrices, and as maximal abelian subalgebra $\mathfrak{a}$ in $\mathfrak{p}$ we choose the set of diagonal traceless matrices.
Let $\lambda_i \in \mathfrak{a}^*$ be the evaluation at the $i$-th entry of an element in $\mathfrak{a}$, i.e.\ $\lambda_i (\mathrm{diag}(a_1, \dots, a_n)) = a_i$.
A short calculation shows that the root space $\mathfrak{g}_{\alpha}$ for 
$\alpha \in \mathfrak{a}^*$ is non-zero if and only if $\alpha = \alpha_{ij} := \lambda_i - \lambda_j$ for some $1 \leq i \neq j \leq n$. 
The root space $\mathfrak{g}_{\alpha_{ij}}$ is spanned by the matrix $E_{ij}$ that has a $1$ in the $i$-th row and $j$-th column and $0$ everywhere else.
The set of restricted roots $\Sigma$, a system of simple roots $\Delta$ and the set of positive roots $\Sigma^+$ are
\begin{align*}
\Sigma  & =\{ \alpha_{ij} \ | \ 1 \leq i \neq j \leq n \}, \\
\Delta &= \{ \alpha_{i(i+1)}| \ i = 1, \dots, n-1 \} \ \mathrm{and}\\
\Sigma^+ &= \{ \alpha_{ij}| \ 1 \leq i < j \leq n \}.
\end{align*}
To shorten notation, set $\alpha_{i} := \alpha_{i(i+1)}$. 
We often identify the set of simple roots $\Delta$ with the set $\{ 1, \dots, n-1\}$.
The subalgebra $\mathfrak{n}^+$ and sugroup $N^+$ are given by the upper triangular matrices with only $0$s and only $1$s on the diagonal, respectively.
The minimal parabolic subgroup $B^+$ is the set of all upper triangular matrices, i.e.\
\begin{align*}
B^+ = Z_K(\mathfrak{a}) AN^+ = \left\{
\begin{pmatrix}
* & \dots  & * \\  & \ddots &\vdots  \\  0 & & * 
\end{pmatrix} \middle| \  \mathrm{det} = 1  \right\}.
\end{align*}
Similarly, $B^-$ is the set of all lower triangular matrices.

Let $\theta = \{i_1, \dots, i_k \} \subset \Delta$ with  $1 \leq i_1 < \dots < i_k \leq n-1$.  
Also, let $i_0 := 0$ and $i_{k+1} := n$. 
For $j=1, \dots, k+1$, let $m_j := i_j - i_{j-1}$, so the $m_j$ describe the sizes of the gaps between the elements in $\theta$.
For $i \in \Delta$, we have that $\mathrm{ker}(\alpha_i)$ is given by all traceless diagonal matrices where the $i$th and $(i+1)$th  entry are equal. 
Thus $\atheta = \bigcap_{\alpha \in \Delta \setminus \theta} \mathrm{ker}(\alpha)$ is given by block diagonal matrices, where the $j$-th block is a scalar multiple of the $m_j \times m_j$ identity matrix, and the centralizer $Z_K(\atheta)$ is given by block diagonal matrices where the blocks $A_{m_j}$ are $m_j \times m_j$ matrices.

The standard parabolic subgroup $P_{\theta}$ for $\theta$ is given by upper block triangular matrices. More precisely, 
\begin{align*}
P_\theta^+ = Z_K(\atheta)AN^+ = \left\{
\begin{pmatrix}
A_{m_1} &* & \dots  & * \\ 
0 & \ddots & & \vdots \\ 
\vdots & & \ddots & * \\  
0 & \dots & 0 & A_{m_{k+1}} 
\end{pmatrix} \in \SL{n} \right \},
\end{align*}
where the $A_{m_j}$ have size $m_j \times m_j$.
Similarly, $P_\theta^-$ consists of lower block triangular matrices with blocks of sizes $m_j \times m_j$.
The unipotent radical $N_\theta^\pm$ is the subgroup of $P_\theta^\pm$ with all blocks on the diagonal being identity matrices and the Levi subgroup $L_\theta$ is the group of block diagonal matrices with the blocks of sizes $m_j \times m_j$.

Elements of the flag manifold $\mathcal{F}_\theta$ are families of nested subspaces of the form
\begin{align*}
F =  \left(F^{(i_1)} \subset \dots \subset F^{(i_k)} \subset \mathbb{R}^n \right) \quad \mathrm{with} \ \mathrm{dim}\  F^{(i_j)} = i_j,
\end{align*}
so flags in the sense of linear algebra. 
This explains the name \emph{flag manifold}.

The maximal abelian subalgebra $\mathfrak{a}$ is the set of traceless diagonal matrices, so it is isomorphic to $\mathbb{R}^{n-1}$. 
The Weyl group is isomorphic to the symmetric group $\mathcal{S}_n$ with generators the orthogonal reflections along the hyperplanes $\mathrm{ker}(\alpha_i)$ for $i = 1, \dots, n-1$.
Here, the hyperplane  $\mathrm{ker}(\alpha_i)$ is given by all elements $\mathrm{diag}(a_1, \dots, a_n) \in \mathfrak{a}$ with $a_i = a_{i+1}$.
The closed positive Weyl chamber is
\begin{align*}
\overline{\mathfrak{a}^+} 
= \left \{ \mathrm{diag}(a_1, \dots, a_n) \  \middle| \  \sum_{i=1}^n a_i= 0, a_i \geq a_{i+1} \forall i\right \}.
\end{align*}
The longest element $w_0$ of the Weyl group, which sends $\overline{\mathfrak{a}^+}$ to $-\overline{\mathfrak{a}^+}$ acts on $\mathfrak{a}$ by reversing the order of the diagonal elements.
On $\Delta$, the longest element acts as $w_0(\alpha_{i}) = - \alpha_{n-i}$, so for the opposition involution $\iota \colon \Delta \to \Delta$, we have $\iota(\alpha_i) = \alpha_{n-i}$. 
The assumption that $\iota(\theta) = \theta$ in this setting means that $i \in \theta$ if and only if $n-i \in \theta$.
Here, we use the identification of $\Delta$ with $\{1, \dots, n-1\}$.
Two flags $F =  \left(F^{(i_1)} \subset \dots \subset F^{(i_k)}\right)$ and $E =  \left(E^{(i_1)} \subset \dots \subset E^{(i_k)} \right)$ in $\mathcal{F}_{\theta}$ are transverse if and only if $F^{(i_j)} \oplus E^{(n-i_j)} =\mathbb{R}^n$ for all $i_j \in \theta$.
\end{ex}

\subsection{Anosov representations}

Anosov representations are representations from $\pi_1(S)$ to $G$ with special dynamical properties. 
They were first defined by Labourie in \cite{Labourie} and the definition was extended by Guichard and Wienhard in \cite{GuichardWienhard_Anosov}.
Today, many equivalent characterization of Anosov representations exist, we present a characterization given in \cite[Theorem 1.3]{GGKW}. 
We are only interested in surface groups, but Anosov representations can be defined more generally for any word-hyperbolic group $\Gamma$. 
\begin{ass}
Throughout this paper, let $S$ be a closed connected oriented surface of genus at least $2$. 
Fix an auxiliary hyperbolic metric $m$ on $S$.
Denote by $\tilde{S}$ the universal cover of $S$, which then carries a hyperbolic metric as well, and let $\partial_\infty \tilde{S}$ be the boundary at infinity.
Fixing base points on $S$ and $\tilde{S}$, the fundamental group $\pi_1(S)$ of $S$ acts on $\tilde{S}$ by isometries. 
\end{ass}

Before giving the definition of Anosov representations, we need to introduce some more concepts. 
Let $\theta \subset \Delta$ be a subset of the simple roots and $\mathcal{F}_\theta = G/P_{\theta}^+$ the flag manifold for the standard parabolic subgroup $P_\theta^+$.
\begin{defi}
Let $\rho \colon \pi_1(S) \to G$ be a representation and $\zeta \colon \partial_\infty \tilde{S} \to \mathcal{F}_\theta$ be a map.
\begin{itemize}
\item $\zeta$ is called \emph{$\rho$-equivariant} if $\zeta(\gamma x) = \rho(\gamma) \cdot  \zeta(x)$ for all $x \in \partial_\infty \tilde{S}$ and $\gamma \in \pi_1(S)$.
\item  $\zeta$ is called \emph{transverse} if for every pair of points $x \neq y \in \partial_\infty \tilde{S}$, the images $\zeta(x), \zeta(y) \in \mathcal{F}_\theta$ are transverse.
\item $\zeta$ is called \emph{dynamics-preserving} for $\rho$ if for every non-trivial element $\gamma \in \pi_1(S)$, its unique attracting fixed point in $\partial_\infty \tilde{S}$ is mapped to an attracting fixed point of $\rho(\gamma)$ on $\mathcal{F}_\theta$.
\end{itemize}
Further, $\rho$ is called \emph{$\theta$-divergent} if for all $\alpha \in \theta$ we have $\alpha(\mu(\rho(\gamma))) \to \infty$  as the word length of $\gamma \in \pi_1(S)$ goes to infinity.
\end{defi}
\begin{defi}[{\cite[Theorem 1.3]{GGKW}}]
\label{def::Anosov}
A representation $\rho \colon \pi_1(S) \to G$ is \emph{$\theta$-Anosov} if it is $\theta$-divergent and there exists a continuous $\rho$-equivariant map $\zeta \colon \partial_\infty \tilde{S} \to \mathcal{F}_\theta$ that is  transverse and dynamics-preserving.
The map $\zeta$ is called the \emph{boundary map} for $\rho$.
\end{defi}
The set of $\theta$-Anosov representations forms an open subset of the representation variety $\mathrm{Hom}(\pi_1(S), G)$ \cite[Theorem 5.13]{GuichardWienhard_Anosov}.

If there exists a continuous, dynamics-preserving boundary map $\zeta \colon \partial_\infty \tilde{S} \to \mathcal{F}_\theta$ for $\rho$, then it is unique and $\rho$-equivariant. 
Further, the boundary map is H\"older continuous \cite[Theorem 6.1]{Bridgeman_ea_Pressure_metric}.

\subsection{Geodesic laminations}

Geodesic laminations are an important tool in low-dimensional topology. 
Although the following Definition \ref{def::geodesic_lamination} makes use of the auxiliary hyperbolic metric $m$ on $S$, geodesic laminations can be defined independent of the choice of a metric (see \cite[Section 3]{Bonahon_Transverse_Hoelder}).
An overview of geodesic laminations can be found in \cite{Bonahon_Laminations}.
\begin{defi}
\label{def::geodesic_lamination}
A \emph{geodesic lamination} $\lambda$ on $S$ is a collection of simple complete disjoint geodesics such that their union is a closed subset of $S$.
The geodesics contained in the lamination $\lambda$ are called \emph{leaves}. 
A geodesic lamination $\lambda$ is \emph{maximal} if every connected component of $S \setminus \lambda$ is isometric to an ideal triangle.
We denote by $\tilde{\lambda}$ the lift of $\lambda$ to the universal cover $\tilde{S}$.
A lamination $\lambda$ is \emph{connected} if it cannot be written as a disjoint union of two sublaminations, i.e. subsets that are themselves laminations.
It is \emph{maximal} if the complement $S \setminus \lambda$ consists of ideal triangles. 
\end{defi}

We can equip the leaves of a lamination $\lambda$ with an orientation -- but if the lamination is maximal, this cannot be done in a continuous way. 
For this reason, we look at the \emph{orientation cover} $\olambda$ of $\lambda$.
\begin{defi}
\label{def::orientation_cover}
The \emph{orientation cover} $\olambda$ is a $2$-cover of the lamination $\lambda$ whose geodesics are oriented in a continuous fashion. 
\end{defi}
For example, if $\lambda$ is orientable, then $\olambda$ consists of two disjoint copies of $\lambda$ with opposite orientations.
In order to consider arcs transverse to the lamination $\olambda$, we need an ambient surface $\widehat{U}$ in which $\olambda$ is embedded. 
Let $U \subset S$ be an open neighborhood of $\lambda$ that avoids at least one point in the interior of each ideal triangle in $S \setminus \lambda$. 
The orientation cover $\olambda \to \lambda$ extends to a cover $\widehat{U} \to U$. 
We denote by $\mathfrak{R} \colon \widehat{U} \to \widehat{U}$ the orientation reversing involution.
There is a one-to-one correspondence between oriented arcs transverse to $\lambda$ and non-oriented arcs transverse to $\lambda$: Every oriented arc transverse to $\lambda$ has a unique lift transverse to $\olambda$ such that its intersection with $\olambda$ is positive. 

We will be interested in arcs transverse to a lamination $\lambda$ that are well-behaved in the following sense.
\begin{defi}
\label{def::tightly_transverse}
A arc $k$ is \emph{tightly transverse} to $\lambda$ if it is simple, compact, transverse to $\lambda$, has non-empty intersection with $\lambda$, and every connected component $d \subset k \setminus \lambda$ either contains an endpoint of $k$ or the positive and the negative endpoint of $d$ lie on different leaves of the lamination. 
We use the same terminology for arcs $\tilde{k}$ transverse to the universal cover $\tilde{\lambda}$ or to the orientation cover $\olambda$ of $\lambda$.
\end{defi}
\begin{nota}
\label{nota::lamination}
Throughout the paper, we use the following notation: 
By $\lambda$, we denote a lamination on $S$, by $\tilde{\lambda}$ its lift to the universal cover $\tilde{S}$ and by $\olambda$ the orientation cover. 
We denote by $k$, $\tilde{k}$ and $\widehat{k}$ an oriented arc tightly transverse to $\lambda$, $\tilde{\lambda}$ and $\olambda$, respectively. 
Since the universal cover $\tilde{S}$ is oriented, the leaves of $\tilde{\lambda}$ intersecting $\tilde{k}$ have a well-defined transverse orientation determined by $\tilde{k}$, i.e. if $g$ is a leaf of $\tilde{\lambda}$ intersecting $\tilde{k}$, we orient $g$ such that  the intersection $\tilde{k} \pitchfork g$ is positive. 
Denote by $P$ and $Q$ the connected components of $\tilde{k}$ containing the negative and positive endpoint of $\tilde{k}$, respectively.
Let $\mathcal{C}_{PQ}$ be the set of connected components of $\tilde{S} \setminus \tilde{\lambda}$ separating $P$ and $Q$.
Likewise, for two geodesics $g$ and $h$ in $\tilde{\lambda}$, let $\mathcal{C}_{gh}$ be the set of all connected components in $\tilde{S} \setminus \tilde{\lambda}$ lying between $g$ and $h$.
If $R \subset \tilde{S} \setminus \tilde{\lambda}$ is in $\mathcal{C}_{PQ}$, denote by $g_{R}^0$ and $g_{R}^1$ the oriented geodesics passing through the negative and positive endpoint of $\tilde{k} \cap R$, respectively (Figure \ref{fig::Notation_g_R}).
The fact that $\tilde{k}$ is tightly transverse guarantees that the geodesics $g_R^0$ and $g_R^1$ are disjoint.
\end{nota}

\begin{figure}
\centering
\includegraphics[width=0.8\textwidth]{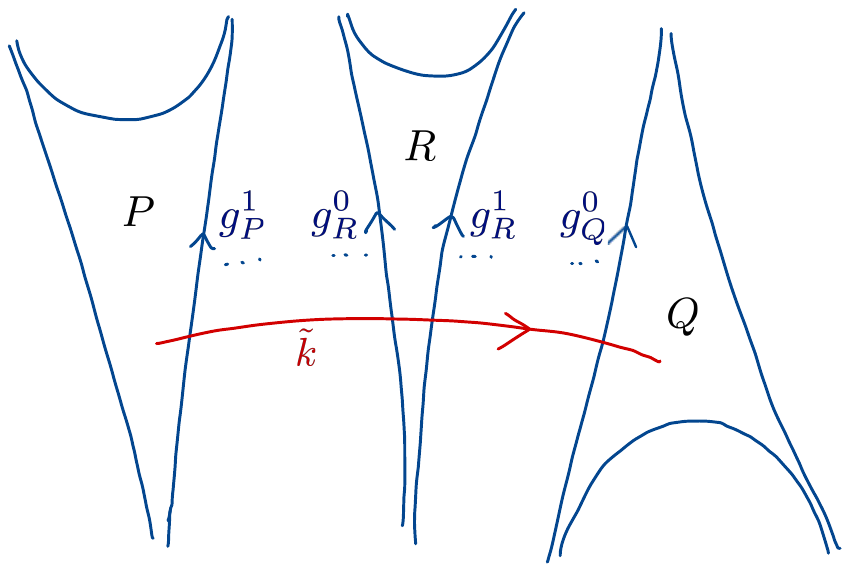}
\caption{For an arc $\tilde{k}$ tightly transverse to $\tilde{\lambda}$ and an ideal triangle $R \in \mathcal{C}_{PQ}$, denote by $g_R^0$, and $g_R^1$ the oriented geodesics in $\tilde{\lambda}$ passing through the negative and  positive endpoint of $\tilde{k} \cap R$, respectively. }
\label{fig::Notation_g_R}
\end{figure}

The following is classical property of geodesic laminations. 
\begin{lemma}[{\cite[Lemma 5.3]{Bonahon_Acta}}]
\label{lem::divergence_radius}
Let $g$ and $h$ be two geodesics in $\tilde{\lambda}$.
There is a function $r \colon \mathcal{C}_{gh} \to \mathbb{N}$, called \emph{divergence radius}, and constants $C_1, C_2, A_1, A_2>0$ such that the following conditions hold:
\begin{enumerate}
\item $C_1 e^{-A_1r(R)} \leq \ell(\tilde{k} \cap R) \leq C_2 e^{-A_2r(R)}$ for every $R \in \mathcal{C}_{gh}$;
\item for every $N \in \mathbb{N}$, the number of triangles $R \in \mathcal{C}_{gh}$ with $r(R) = N$ is uniformly bounded, independent of $N$.
\end{enumerate}
Here, $\ell$ denotes the length function on $\tilde{S}$ induced by the fixed metric on $S$.
\end{lemma}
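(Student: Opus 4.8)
The plan is to prove this following the classical strategy for geodesic laminations on compact hyperbolic surfaces: define the divergence radius $r(R)$ as a \emph{combinatorial} quantity recording how far two boundary leaves of $R$ travel together inside $S$, deduce the metric estimate (1) from exponential contraction in $\mathbb{H}^2$, and deduce the counting estimate (2) from the bounded geometry of $S$ together with the finiteness of complementary regions of $\lambda$.

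First I would work in $\tilde S \cong \mathbb{H}^2$ and reduce to the case that $\tilde k$ is a geodesic arc: a tightly transverse arc is a uniform quasigeodesic, so passing to the geodesic with the same endpoints changes every length comparison below only through the constants. Fix $\epsilon_0 > 0$ below, say, half the injectivity radius of $(S,m)$. For $R \in \mathcal{C}_{gh}$, tight transversality forces the two leaves $g_R^0, g_R^1$ bounding $R$ and met by $\tilde k$ to be disjoint (Notation \ref{nota::lamination}); when they are consecutive sides of the ideal polygon $R$ — always so if $\lambda$ is maximal, $R$ then being an ideal triangle — they are asymptotic to a common ideal point, and the geodesic strip between them has an exponentially thinning spike, whereas for non-consecutive sides $\tilde k$ crosses the thick part of $R$ and $\ell(\tilde k \cap R)$ is bounded below by a universal constant. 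I would set $r(R) := \lfloor \epsilon_0^{-1} t_R \rfloor$, where $t_R$ is the height up that spike at which $\tilde k$ crosses from $g_R^0$ to $g_R^1$ (and $r(R) := 0$ in the non-consecutive case); equivalently, $r(R)$ is the number of $\epsilon_0$-steps along which the projections of $g_R^0$ and $g_R^1$ to $S$ remain within $\epsilon_0$ of one another before $\tilde k$ passes between them.

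For (1) I expect the estimate to come straight out of hyperbolic trigonometry: $\tilde k \cap R$ is a geodesic chord joining a point of $g_R^0$ to a point of $g_R^1$ at height $\asymp \epsilon_0 r(R)$ up the spike, and the width of the strip between two asymptotic geodesics decays like $e^{-t}$ in the height $t$, so $\ell(\tilde k \cap R) \asymp e^{-\epsilon_0 r(R)}$. Keeping track of the universal constants — the opening angle of every spike of every complementary region is bounded away from $0$ and $\pi$ by compactness of $S$, and $\tilde k$ need not meet the strip orthogonally — turns this into the two-sided bound $C_1 e^{-A_1 r(R)} \le \ell(\tilde k \cap R) \le C_2 e^{-A_2 r(R)}$, with possibly distinct rates $A_1 \ge A_2 > 0$.

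The main obstacle is (2). If $r(R) = N$ then $R$ carries a stem of length $\asymp \epsilon_0 N$ along which $g_R^0$ and $g_R^1$ stay $\epsilon_0$-close, and its projection to $S$ is a geodesic segment of length $\asymp \epsilon_0 N$ that $\epsilon_0$-shadows a leaf of $\lambda$; this shadowed leaf segment pins down $R$ up to finitely many choices, because $\epsilon_0$ lies below the injectivity radius, so two stems whose projections overlap in a segment longer than $2\epsilon_0$ must lift to $\epsilon_0$-fellow-traveling stems and hence bound the same complementary region of $\tilde\lambda$, while $S \setminus \lambda$ has only finitely many complementary regions and a leaf segment of bounded length meets only finitely many of them. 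It then remains to see that, in the linear order that $\tilde k$ induces on $\mathcal{C}_{gh}$, the components with divergence radius exactly $N$ fall into boundedly many families sharing a common shadowing pattern, and that within each family the \emph{embedded} geodesic arc $\tilde k$ can pass between the relevant pair of leaves only boundedly often. The points I expect to need care are: making ``stem'', ``height up the spike'' and ``shadowing'' precise when $\lambda$ is not maximal, so that $R$ may be an ideal polygon or a crown with several spikes; checking that rounding by $\epsilon_0$ does no harm; and treating separately the first and last regions of $\mathcal{C}_{gh}$ adjacent to $g$ and to $h$, where the bound is immediate. Together, (1) and (2) give the lemma, with the constants depending only on $(S,m,\lambda)$ and the arc $\tilde k$.
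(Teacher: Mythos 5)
The paper does not actually prove this lemma; it cites it as \cite[Lemma 5.3]{Bonahon_Acta} and only remarks that, ``roughly speaking, the divergence radius gives the minimal number of lifts of the projection of $R$ to $S$ that separate $R$ from $g$ or $h$,'' referring to Bonahon for the precise definition. That description is a \emph{combinatorial} one, and it is chosen precisely so that claim~(2) becomes essentially automatic: for a fixed component $\overline{R}\subset S\setminus\lambda$, the lifts of $\overline{R}$ in $\mathcal{C}_{gh}$ are linearly ordered along $\tilde k$, and at most two of them can have a prescribed number $N$ of preceding lifts (one counted from $g$, one from $h$); summing over the finitely many components of $S\setminus\lambda$ gives a bound independent of $N$. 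All the geometric work in Bonahon's route then goes into~(1), the exponential comparison between the combinatorial count and $\ell(\tilde k\cap R)$. Your proposal reverses this: you \emph{define} $r(R)$ metrically as the height up the spike at which $\tilde k$ crosses, which makes~(1) follow directly from hyperbolic trigonometry, but pushes the entire difficulty into~(2).

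And that is where there is a genuine gap. Your argument for~(2) is that the stem of $R$ projects to an $\asymp\epsilon_0 N$-long segment shadowing a leaf of $\lambda$, and that two stems whose projections overlap long enough come from $\pi_1(S)$-translates of the same complementary region. That is correct as far as it goes, but it only bounds the number of \emph{$\pi_1(S)$-orbits} of such $R$, not the number of $R$ themselves: $\tilde k$ can and in general does cross infinitely many translates of the same region $\overline{R}$. The claim you still need — that in the linear order along $\tilde k$ the embedded arc can meet a given translate class ``at height $\approx N$'' only boundedly often — is exactly the content of the lemma and is what the combinatorial definition is engineered to hand you for free. You flag this yourself (``It then remains to see\ldots''), but a naive length count does not rescue it: each $R$ with $r(R)=N$ contributes length $\gtrsim e^{-A_1 N}$ to $\tilde k$, so the total length of $\tilde k$ only bounds the number of such $R$ by $O(e^{A_1 N})$, which is far from uniform in $N$. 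To make your route work you would essentially have to prove that your metric $r$ and Bonahon's combinatorial $r$ are comparable up to additive constants — which is equivalent to the lemma. So either adopt the combinatorial definition and prove~(1) via the spike estimate (the route the paper follows via Bonahon), or supply the missing nesting/monotonicity argument for~(2); as written, step~(2) is not established.
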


Roughly speaking, the divergence radius gives the minimal number of lifts of the projection of $R$ to $S$ that separate $R$ from either $g$ or $h$. 
It is defined as follows:
Let $\tilde{k}$ be an arc transverse to $\tilde{\lambda}$ from $g$ to $h$ as above. 
The definition does not depend on the choice of $\tilde{k}$.
Let $p \colon \tilde{S} \to S$ be the projection and $k := p(\tilde{k})$ the projection of $\tilde{k}$ to $S$. 
Consider the connected component $d_R := p(\tilde{k} \cap R) \subset p(\tilde{k}) \setminus \lambda$.
The leaves $g_{R}^0$ and $g_{R}^1$ of $\tilde{\lambda}$ project to leaves $p(g_R^0)$ and $p(g_R^1)$ of passing through the endpoints of $d_R$ (see Figure \ref{fig::divergence_radius}).
The arc $d_R$ divides $p(R)$ into two regions.
Consider all components $d \subset k \setminus \lambda$ that have their negative endpoint on $p(g_{R}^0)$ and their positive endpoints on $p(g_{R}^1)$. 
Since $k$ is compact, at least one of the two regions of $p(R)$ defined by $d_R$ contains only finitely many such components $d \subset k \setminus \lambda$.
Define $r(R) \in \mathbb{N}$ as the minimal number of components of $k \setminus \lambda$ contained in one of the regions that have their endpoints on $p(g_{R}^0)$ and $p(g_{R}^1)$.
\begin{figure}
\centering
\includegraphics[width=0.8\textwidth]{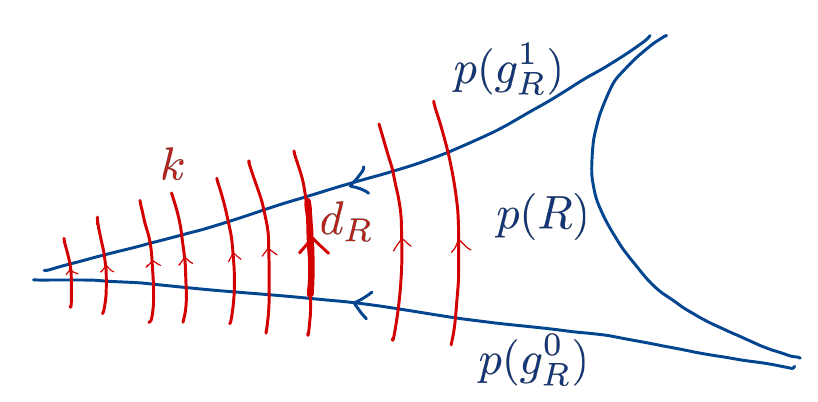}
\caption{This picture illustrates the definition of divergence radius and shows the situation on the surface $S$. The compact arc $k = p(\tilde{k})$ crosses the connected component $p(R)$ of $S \setminus \lambda$ several times. 
The arc $d_R := p(\tilde{k} \cap R) \subset k \setminus \lambda$ divides the component $p(R)$ into two regions.  
The divergence radius $r(R)$ is the minimal number of components of $d \subset k \setminus \lambda$ that have their endpoints on the same geodesics $g_{R}^0$ and $g_{R}^1$ as $d_R$. 
In this picture, we have $r(R) = 2$.}
\label{fig::divergence_radius}
\end{figure}

\begin{ex}
If $R \in \mathcal{C}_{gh}$ is the only lift of $p(R)$ crossed by $\tilde{k}$, then $r(R) = 0$.
If $\tilde{k}$ meets $R$ and $R'$ that project to the same component $p(R) = p(R')$ in $S$, and if $\tilde{k}$ does not meet any other lift of $p(R)$ between $R$ and $R'$, then $|r(R) - r(R')| \leq 1$.
If $\tilde{k}$ crosses only finitely many components of $\tilde{S} \setminus \tilde{\lambda}$, then the divergence radius $r(R)$ is bounded over all $R \in \mathcal{C}_{gh}$.
\end{ex}

A detailed treatment of the divergence radius can be found in \cite[Section 1]{Bonahon_Shearing}.

\section{Stretching maps}
\label{sec::stretching_maps}

Stretching maps are the basic building blocks for cataclysm deformations.
In the case of Teichm\"uller space, they are hyperbolic isometries that act as a stretch along a geodesic. 
In the general case of $\theta$-Anosov representations, they can be thought of as a generalization of such a stretch. 
Stretching maps depend on an oriented geodesic which determines the direction of the stretch, and on an element in $\atheta$ which determines the amount of stretching.

Let $\rho \colon \pi_1(S) \to G$ be a fixed $\theta$-Anosov representation with boundary map $\zeta \colon \partial_\infty \tilde{S} \to \mathcal{F}_\theta$. 
For an oriented geodesic $g$ in $\tilde{S}$ with positive and negative endpoint $g^+$ and $g^-$, respectively, let $P_g^\pm := \zeta(g^\pm)$ be the flags associated to the endpoints $g^\pm$.

\begin{defi}
\label{def::stretch_map}
Let $g$ be an oriented geodesic in $\tilde{S}$ and $H \in \mathfrak{a}_\theta$. 
Let $(P_g^+, P_g^-)$ be the pair of transverse parabolics associated with $g$ and let $m_g \in G$ such that $m_g \cdot P_\theta^\pm= P_g^{\pm}$. 
We define the \emph{$H$-stretching map along $g$} as
\begin{align*}
T^H_g := m_g\exp(H)m_g^{-1}.
\end{align*}
\end{defi}
Note that the element $m_g$ in the definition of the stretching map is only unique up to an element in $L_\theta = P^+_\theta \cap P^-_\theta$, but since $\exp(\atheta)$ lies in the centralizer of $L_\theta$, the definition of the stretching map is well-defined.
\begin{ex}
For the case of $\SL{n}$, the pair $(P_g^+, P_g^-)$ associated with an oriented geodesic $g$ determines a subspace splitting of $\mathbb{R}^n$.
With respect to a basis adapted to this splitting, the stretching map $T_g^H$ is a diagonal matrix, so it acts as stretch on the subspaces. 
This explains the name \emph{stretching map}.
\end{ex}

\begin{lemma}
\label{lem::stretch_properties}
Let $g$ be an oriented geodesic and denote by $\overline{g}$ the geodesic with opposite orientation.
For $H, H_1, H_2 \in \atheta$, the stretching map has the following properties:
\begin{enumerate}
\item $T^{H_1}_g T^{H_2}_g = T^{H_1 + H_2}_g$,
\item $\left(T^H_g\right)^{-1} = T^{-H}_g$,
\item \label{item::orientation} $T^H_{\overline{g}} = T^{-\iota(H)}_g$ and 
\item\label{item::equivariance} $\rho$-equivariance,  i.e. $T^H_{\gamma g} = \rho(\gamma)T^H_g \rho(\gamma)^{-1}$ for all $\gamma \in \pi_1(S)$.
\end{enumerate}
\end{lemma}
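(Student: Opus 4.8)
The plan is to verify each of the four properties directly from the definition $T^H_g = m_g \exp(H) m_g^{-1}$, using the fact (noted right after Definition \ref{def::stretch_map}) that the stretching map does not depend on the choice of $m_g \in G$ with $m_g \cdot P_\theta^\pm = P_g^\pm$, because any two such choices differ by an element of $L_\theta$, which commutes with $\exp(\atheta)$.

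For property (1), I would fix a single choice of $m_g$ and compute $T^{H_1}_g T^{H_2}_g = m_g \exp(H_1) m_g^{-1} m_g \exp(H_2) m_g^{-1} = m_g \exp(H_1)\exp(H_2) m_g^{-1}$; since $H_1, H_2 \in \atheta \subset \mathfrak{a}$ and $\mathfrak{a}$ is abelian, $\exp(H_1)\exp(H_2) = \exp(H_1 + H_2)$, giving $T^{H_1 + H_2}_g$. Property (2) is the special case $H_1 = H$, $H_2 = -H$ of property (1) together with $T^0_g = m_g \exp(0) m_g^{-1} = \Id$, so $T^H_g T^{-H}_g = T^0_g = \Id$. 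For property (4), note that if $m_g$ satisfies $m_g \cdot P_\theta^\pm = P_g^\pm$, then by $\rho$-equivariance of the boundary map, $P_{\gamma g}^\pm = \zeta((\gamma g)^\pm) = \zeta(\gamma g^\pm) = \rho(\gamma) \cdot \zeta(g^\pm) = \rho(\gamma) \cdot P_g^\pm$, so $\rho(\gamma) m_g$ is a valid choice of $m_{\gamma g}$; substituting, $T^H_{\gamma g} = \rho(\gamma) m_g \exp(H) m_g^{-1} \rho(\gamma)^{-1} = \rho(\gamma) T^H_g \rho(\gamma)^{-1}$.

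The one property requiring genuine thought is (3), the behavior under orientation reversal, and this is the step I expect to be the main obstacle. Reversing the orientation of $g$ swaps the roles of $g^+$ and $g^-$, hence swaps $P_g^+$ and $P_g^-$; I need an element $m_{\overline g}$ with $m_{\overline g} \cdot P_\theta^+ = P_g^-$ and $m_{\overline g} \cdot P_\theta^- = P_g^+$. The natural candidate is $m_{\overline g} = m_g \dot w_0$, where $\dot w_0 \in N_K(\mathfrak{a})$ is a representative of the longest Weyl element $w_0$, using that $\dot w_0 \cdot P_\theta^+ = P_\theta^-$ and $\dot w_0 \cdot P_\theta^- = P_\theta^+$ (this is where the standing assumption $\iota(\theta) = \theta$ enters, ensuring $P_\theta^-$ is conjugate to $P_\theta^+$, i.e.\ lies in $\mathcal{F}_\theta$). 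Then
\begin{align*}
T^H_{\overline g} = m_{\overline g} \exp(H) m_{\overline g}^{-1} = m_g \dot w_0 \exp(H) \dot w_0^{-1} m_g^{-1} = m_g \exp(\mathrm{Ad}_{\dot w_0}(H)) m_g^{-1}.
\end{align*}
Since $\dot w_0$ normalizes $\mathfrak{a}$ and acts on it as the Weyl element $w_0$, and $\iota = -w_0$ on $\mathfrak{a}$ by Definition \ref{def::opposition_involution}, we get $\mathrm{Ad}_{\dot w_0}(H) = w_0(H) = -\iota(H)$; moreover $\iota(\theta) = \theta$ guarantees $-\iota(H) \in \atheta$, so the right-hand side is $m_g \exp(-\iota(H)) m_g^{-1} = T^{-\iota(H)}_g$, as claimed. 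The care needed here is in checking that $m_g \dot w_0$ genuinely sends the pair $(P_\theta^+, P_\theta^-)$ to $(P_g^-, P_g^+)$ and in tracking the sign conventions linking $w_0$, $\iota$, and the action of $\dot w_0$ on $\mathfrak{a}$ via $\mathrm{Ad}$.
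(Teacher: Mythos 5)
Your proposal is correct and follows exactly the approach the paper sketches: properties (1) and (2) are immediate from $\mathfrak{a}$ being abelian, property (3) is established by taking $m_{\overline{g}} = m_g \dot w_0$ and using that $w_0$ maps $P_\theta^\pm$ to $P_\theta^\mp$ together with $\iota = -w_0$, and property (4) follows from $\rho$-equivariance of $\zeta$. The sign-tracking through $\mathrm{Ad}_{\dot w_0}(H) = w_0(H) = -\iota(H)$ and the observation that $\iota(\theta)=\theta$ keeps $-\iota(H)$ in $\atheta$ are precisely the points the paper's one-line proof alludes to.
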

\begin{proof}
These properties follow from short computations, using that the longest element of the Weyl group maps $P_\theta^\pm$ to $P_\theta^\mp$ and the $\rho$-equivariance of the boundary map $\zeta$ for \eqref{item::orientation} and  \eqref{item::equivariance}, respectively.
\end{proof}

We say that two geodesics $g$ and $h$ are \emph{separated by wedges} if for every connected component $R \in \mathcal{C}_{gh}$, the geodesics $g_R^0$ and $g_R^1$ share an endpoint.
We estimate the distance between two stretching maps for different oriented geodesics $g$ and $h$ in terms of the distance between $g$ and $h$. 
To do so, fix a metric $\dG$ on $G$ that is left-invariant and almost right-invariant. 
More precisely,, $\dG$ satisfies for all $a_1, a_2, b \in G$ that 
\begin{align*}
\dG(a_1b, a_2b) \leq \norm{\mathrm{Ad}_{b^{-1}}}_{\mathrm{op}(\lieg)} \dG(a_1, a_2),
\end{align*}
where  $\norm{\cdot}_{\mathrm{op}(\lieg)}$ is the operator norm induced by a fixed norm $\norm{\cdot}_{\lieg}$ coming from a scalar product on $\lieg$.
For the existence of such a metric, see \cite[Lemma A]{SchiffShnider}.
\begin{prop}
\label{prop::Close_stretches}
There exist constants $C,A>0$, depending on $\tilde{k}$ and $\rho$, such that for every $H \in \atheta$ and for all geodesics $g,h$ in $\tilde{\lambda}$ that intersect $\tilde{k}$, are oriented positively with respect to the orientation of $\tilde{k}$ and are separated by wedges, we have
\begin{align*}
\dG(T^H_g, T^H_h) \leq C \left((e^{\norma{H}}+1) \mathrm{d}(g,h)^A \right), 
\end{align*}
where $\norma{\cdot}$ is a suitable norm on $\mathfrak{a}$.
\end{prop}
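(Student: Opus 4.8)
\emph{Proof strategy.}
The plan is to reduce the estimate to the case of a single ``wedge'', i.e.\ two asymptotic leaves; to prove the inequality there by a direct computation with Definition \ref{def::stretch_map} and the invariance properties of $\dG$; and then to sum the contributions of the wedges separating $g$ from $h$ by means of the divergence radius of Lemma \ref{lem::divergence_radius}. For the reduction I would list the leaves $\ell_0=g,\ell_1,\ell_2,\dots$ of $\tilde{\lambda}$ that $\tilde{k}$ meets on the way from $g$ to $h$: if $R_1,R_2,\dots$ enumerate $\mathcal{C}_{gh}$ in order, then $\ell_{j-1}=g_{R_j}^0$ and $\ell_j=g_{R_j}^1$. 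Since $\tilde{k}$ is compact, these leaves form a compact family of geodesics (being the image of the compact set $\tilde{k}\cap\tilde{\lambda}$ under the continuous map sending a point of $\tilde{\lambda}$ to the leaf through it); their endpoints therefore stay bounded apart, so the pairs $(\zeta(\ell^+),\zeta(\ell^-))$ lie in a compact subset of the open (transverse) $G$--orbit in $\mathcal{F}_\theta\times\mathcal{F}_{\iota(\theta)}$, and the elements $m_\ell$ of Definition \ref{def::stretch_map} may be chosen from a fixed compact subset of $G$; in particular $\norm{\mathrm{Ad}_{m_\ell}}_{\mathrm{op}(\lieg)}$ is bounded uniformly in $\ell$. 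Iterating the triangle inequality along this chain, using the continuity of $\ell\mapsto T^H_\ell$ and the fact that only boundedly many leaves occur at each scale (Lemma \ref{lem::divergence_radius}(2)), I would obtain $\dG(T^H_g,T^H_h)\leq\sum_{R\in\mathcal{C}_{gh}}\dG(T^H_{g_R^0},T^H_{g_R^1})$, so that it suffices to bound each summand; by the hypothesis that $g$ and $h$ are separated by wedges, $g_R^0$ and $g_R^1$ share an ideal endpoint for every $R\in\mathcal{C}_{gh}$.

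For the one-wedge estimate, fix $R\in\mathcal{C}_{gh}$ and set $g'=g_R^0$, $h'=g_R^1$. Replacing if necessary $H$ by $-\iota(H)$ and $g',h'$ by their reverses -- which by Lemma \ref{lem::stretch_properties}\eqref{item::orientation} changes neither side of the inequality to be proven, once $\norma{\cdot}$ is chosen $\iota$--invariant (e.g.\ $H\mapsto\max_{\alpha\in\Sigma}|\alpha(H)|$) -- I may assume that $g'$ and $h'$ share their positive endpoint, so $P_{g'}^+=P_{h'}^+$. With $m:=m_{g'}$ as in Definition \ref{def::stretch_map}, since $\zeta(h'^-)$ is transverse to $P_{g'}^+=m\cdot P_\theta^+$ there is a unique $n\in N_\theta^+$ with $mn\cdot P_\theta^-=\zeta(h'^-)$; as $n$ fixes $P_\theta^+$ the element $mn$ represents $h'$, so $T^H_{h'}=m\,n\exp(H)n^{-1}m^{-1}$. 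Repeatedly applying the left-invariance and almost right-invariance of $\dG$ then gives
\begin{align*}
\dG\bigl(T^H_{g'},T^H_{h'}\bigr)\ \leq\ \norm{\mathrm{Ad}_m}_{\mathrm{op}(\lieg)}\Bigl(\norm{\mathrm{Ad}_{\exp(-H)}}_{\mathrm{op}(\lieg)}\,\dG(e,n)+\dG(e,n^{-1})\Bigr).
\end{align*}
Since $\mathrm{Ad}_{\exp(-H)}$ acts on $\mathfrak{g}_\alpha$ by the scalar $e^{-\alpha(H)}$, one has $\norm{\mathrm{Ad}_{\exp(-H)}}_{\mathrm{op}(\lieg)}\lesssim e^{\norma{H}}$; moreover, as above $m$ lies in a fixed compact set and $n$ in a fixed compact subset of $N_\theta^+$ (the pair $(\zeta(h'^+),\zeta(h'^-))$ staying in a compact subset of the transverse locus). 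Hence, writing $\delta_n:=\dG(e,n)+\dG(e,n^{-1})$, there is a constant $C$ independent of $R$ and $H$ with $\dG(T^H_{g_R^0},T^H_{g_R^1})\leq C(e^{\norma{H}}+1)\,\delta_n$.

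Next I would bound $\delta_n$ and sum. The element $n$ carries $P_\theta^-$ to $m^{-1}\cdot\zeta(h'^-)$, and would carry it to $m^{-1}\cdot\zeta(g'^-)=P_\theta^-$ if $h'=g'$; since $N_\theta^+$ acts simply transitively -- hence bi-Lipschitzly on compact sets -- on the parabolics transverse to $P_\theta^+$, and $\zeta$ is H\"older continuous, $\delta_n\lesssim|g'^{-}-h'^{-}|^{\alpha}$ for some $\alpha\in(0,1]$, the endpoints being measured in a visual metric on $\partial_\infty\tilde{S}$. Since $g'$ and $h'$ meet $\tilde{k}$ at two points of $R$ at distance $\ell(\tilde{k}\cap R)$, a standard hyperbolic estimate gives $|g'^{-}-h'^{-}|\lesssim\ell(\tilde{k}\cap R)$, and therefore
\begin{align*}
\dG\bigl(T^H_g,T^H_h\bigr)\ \leq\ C\bigl(e^{\norma{H}}+1\bigr)\sum_{R\in\mathcal{C}_{gh}}\ell(\tilde{k}\cap R)^{\alpha}.
\end{align*}
Let $r_{\min}:=\min_{R\in\mathcal{C}_{gh}}r(R)$ for the divergence radius. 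Using part (2) of Lemma \ref{lem::divergence_radius} together with the upper bound in part (1), $\sum_{R\in\mathcal{C}_{gh}}\ell(\tilde{k}\cap R)^{\alpha}\leq\sum_{N\geq r_{\min}}(\mathrm{const})\,e^{-A_2\alpha N}\lesssim e^{-A_2\alpha r_{\min}}$; and applying the lower bound in part (1) to a region $R_\ast$ realizing $r_{\min}$, together with $\mathrm{d}(g,h)\gtrsim\ell(\tilde{k}\cap R_\ast)$ (a consequence of $\tilde{k}$ being tightly transverse), gives $\mathrm{d}(g,h)\gtrsim C_1 e^{-A_1 r_{\min}}$. Combining these, $e^{-A_2\alpha r_{\min}}\lesssim\mathrm{d}(g,h)^{A_2\alpha/A_1}$, and the proposition follows with $A:=A_2\alpha/A_1>0$ and $\norma{\cdot}$ the norm fixed above.

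I expect the main obstacle to be the one-wedge estimate, and within it the appearance of the factor $\norm{\mathrm{Ad}_{\exp(-H)}}_{\mathrm{op}(\lieg)}\asymp e^{\norma{H}}$, which is precisely what makes the factor $e^{\norma{H}}$ in the statement unavoidable; the remaining points -- that the iterated triangle inequality along the (a priori infinite) chain of leaves genuinely sums to $\sum_{R\in\mathcal{C}_{gh}}(\cdots)$, and the comparison of $\mathrm{d}(g,h)$ with the divergence radii -- are softer, but they rely on the uniform control provided by the compactness of $\tilde{k}$ and by Lemma \ref{lem::divergence_radius}.
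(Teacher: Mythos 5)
Your one-wedge computation and the final summation via the divergence radius are sound, and your identification of the factor $\norm{\mathrm{Ad}_{\exp(-H)}}_{\mathrm{op}(\lieg)}\asymp e^{\norma{H}}$ as the source of the exponential term matches the paper. However, the reduction to one-wedge estimates has a genuine gap. You write ``if $R_1,R_2,\dots$ enumerate $\mathcal{C}_{gh}$ in order, then $\ell_{j-1}=g_{R_j}^0$ and $\ell_j=g_{R_j}^1$,'' which implicitly asserts $g_{R_j}^1=g_{R_{j+1}}^0$, i.e.\ that consecutive complementary components share a boundary leaf. For a generic lamination this is false: $\tilde{k}\cap\tilde{\lambda}$ is a Cantor set, so between two consecutive $R$'s that appear in a finite truncation there is typically an uncountable family of intervening leaves. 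Iterating the triangle inequality along a finite chain $g, g_{R_1}^0,g_{R_1}^1,\dots,g_{R_m}^1,h$ therefore produces, besides the wedge terms, gap terms $\dG(T^H_{g_{R_j}^1},T^H_{g_{R_{j+1}}^0})$, and there is no a priori reason these vanish in the limit. Mere H\"older continuity of $\ell\mapsto T^H_\ell$ is not enough to dismiss them: the Cantor function is H\"older continuous, is constant on the complementary intervals of the middle-thirds Cantor set, and yet has total variation $1$ -- so the analogue of your claimed inequality $\dG(T^H_g,T^H_h)\le\sum_R\dG(T^H_{g_R^0},T^H_{g_R^1})$ can fail for a general H\"older family.

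This is exactly the difficulty the paper sidesteps by using the slithering map. In the paper's proof one takes $m_g=\Id$ (after normalizing $(P_g^+,P_g^-)=(P_\theta^+,P_\theta^-)$) and $m_h=\Sigma_{hg}$, so that $T^H_h=\Sigma_{hg}\exp(H)\Sigma_{gh}$, and a single application of left- and almost-right-invariance gives $\dG(T^H_g,T^H_h)\le\norm{\mathrm{Ad}_{\exp(-H)}}_{\mathrm{op}(\lieg)}\dG(\Id,\Sigma_{hg})+\dG(\Id,\Sigma_{gh})$. The required bound $\dG(\Id,\Sigma_{gh})\le C\,\mathrm{d}(g,h)^A$ is exactly Proposition~\ref{prop::slithering}\eqref{cond::slithering_Hoelder} (via Remark~\ref{rem::slithering_wedges}); its proof -- carried out in \cite{Bonahon_Acta} for Hitchin representations and stated here as analogous -- is where one proves that the net of wedge products $\Sigma_\mathcal{C}$ not only converges but converges to an element carrying $(P_h^+,P_h^-)$ to $(P_g^+,P_g^-)$, which is precisely the point that controls the Cantor-set gaps. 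Your argument would become correct if you replaced the naive iterated triangle inequality by this limiting statement, but as written that step is missing and cannot be supplied by the divergence-radius lemma alone.
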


For the proof of the proposition, we make use of \emph{slithering maps}, which are explicit elements in $G$ that relate the pairs of parabolics associated to $g$ and $h$.
Slithering maps were first introduced for Teichm\"uller space in \cite[Section 2]{Bonahon_Shearing} and for the case of Hitchin representations into $\mathrm{PSL}(n,\mathbb{R})$ in \cite[Section 5.1]{Bonahon_Acta}.
The authors already mention that their construction possibly extends to a more general context. 
Indeed, we have the following result:

\begin{prop}
\label{prop::slithering}
Let the lamination $\lambda$ be maximal. 
There exists a unique family $\{\Sigma_{gh}\}_{(g,h)}$ of elements in $G$, indexed by all pairs of leaves $g, h$ in $ \tilde{\lambda}$, that satisfies the following conditions:
\begin{enumerate}[label=(\arabic*), ref=\arabic*]
\item \label{cond::slithering_composition} $\Sigma_{gg} = \mathrm{Id}$, $\Sigma_{hg} = \left(\Sigma_{gh}\right)^{-1}$, and $\Sigma_{gh'} = \Sigma_{gh} \Sigma_{hh'}$ when one of the three geodesics $g,h,h'$ separates the others;
\item \label{cond::slithering_Hoelder} $\Sigma_{gh}$ depends locally separately H\"older continuously on $g$ and $h$, i.e.\ there exist constants $C,A >0$ depending on $\tilde{k}$ and $\rho$ such that
$\dG\left( \Sigma_{gh}, Id\right) \leq C\mathrm{d}(g,h)^A$.
\item \label{cond::slithering_triangle} if $g$ and $h$ are oriented and have a common positive endpoint $g^+ = h^+ \in \partial_{\infty} \tilde{S}$ and if $g^-$ and $h^-$ are the negative endpoints of $g$ and $h$, respectively, then $\Sigma_{gh}$ is the unique element in the unipotent radical of $P_g^+$ that sends $P_{h}^-$ to $P_{g}^-$.
\end{enumerate}
From \eqref{cond::slithering_composition}-\eqref{cond::slithering_triangle} it follows that if $g$ and $h$ are oriented in parallel, $\Sigma_{gh}$ sends the pair $(P_{h}^+, P_{h}^-)$ to the pair $(P_{g}^+, P_{g}^-)$.
\end{prop}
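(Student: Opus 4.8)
The plan is to construct the family $\{\Sigma_{gh}\}$ explicitly and then verify the three conditions, deducing uniqueness from them. The construction mimics Bonahon's slithering cocycle. Fix a leaf $g_0$ intersecting $\tilde k$ (we only need the family for leaves in a fixed ``fan'', so we may work relative to $\tilde k$). Given two arbitrary leaves $g,h$ of $\tilde\lambda$, consider the set $\mathcal{C}_{gh}$ of complementary triangles between them; since $\lambda$ is maximal these are ideal triangles. For a triangle $R\in\mathcal{C}_{gh}$, the two sides $g_R^0,g_R^1$ facing $g$ and $h$ respectively share a vertex (they are two sides of an ideal triangle), so condition \eqref{cond::slithering_triangle} defines a canonical element $\Sigma_{g_R^0 g_R^1}$ in the unipotent radical of $P_{g_R^0}^+$. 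I would then \emph{define} $\Sigma_{gh}$ as the (ordered, along $\tilde k$) infinite product $\prod_{R\in\mathcal{C}_{gh}} \Sigma_{g_R^0 g_R^1}$, inserting the trivial ``slithering across a single triangle'' at each step, exactly as in \cite[Section 5.1]{Bonahon_Acta}. The first real task is to show this product converges in $G$; this is where Lemma \ref{lem::divergence_radius} enters: the lengths $\ell(\tilde k\cap R)$ decay exponentially in the divergence radius $r(R)$ while the number of triangles at each radius is bounded, so the factors $\Sigma_{g_R^0 g_R^1}$ tend to $\mathrm{Id}$ fast enough (each being H\"older-close to $\mathrm{Id}$ in terms of $\mathrm{d}(g_R^0,g_R^1)$, which is comparable to $\ell(\tilde k \cap R)$) that the partial products form a Cauchy sequence with respect to $\dG$. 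This simultaneously yields the H\"older estimate \eqref{cond::slithering_Hoelder}: summing the geometric-type series $\sum_R \mathrm{d}(g_R^0,g_R^1)^{A}$ over $\mathcal{C}_{gh}$ gives a bound of the form $C\,\mathrm{d}(g,h)^{A'}$.

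Once convergence is established, the cocycle identity \eqref{cond::slithering_composition} is essentially formal: $\Sigma_{gg}$ is the empty product, reversing the order of an arc reverses the product and inverts it termwise (giving $\Sigma_{hg}=\Sigma_{gh}^{-1}$), and if $h$ lies between $g$ and $h'$ then $\mathcal{C}_{gh'}=\mathcal{C}_{gh}\sqcup\mathcal{C}_{hh'}$ as ordered sets, so the product splits as $\Sigma_{gh}\Sigma_{hh'}$; the cases where $g$ or $h'$ is the middle leaf follow by combining this with the inversion rule. Property \eqref{cond::slithering_triangle} must be checked for adjacent leaves $g,h$ sharing a positive endpoint: there $\mathcal{C}_{gh}$ consists of the single ideal triangle with $g,h$ as two of its sides, so $\Sigma_{gh}$ is by definition the unipotent element sending $P_h^-$ to $P_g^-$. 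For the final assertion (parallel-oriented $g,h$), note $\Sigma_{gh}$, being a product of elements each fixing the relevant attracting flags, carries $P_h^+$ to $P_g^+$; combined with \eqref{cond::slithering_triangle} applied along a chain of triangles, it carries the transverse flag $P_h^-$ to $P_g^-$ as well — this is a telescoping argument using that consecutive triangles share a side.

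For \textbf{uniqueness}: suppose $\{\Sigma'_{gh}\}$ is another family satisfying \eqref{cond::slithering_composition}--\eqref{cond::slithering_triangle}. By \eqref{cond::slithering_composition}, each $\Sigma'_{gh}$ with $g,h$ separated by intervening triangles factors through the leaves bounding those triangles; by \eqref{cond::slithering_triangle} it agrees with our $\Sigma_{gh}$ on every ``one-triangle'' step; hence the two families agree on all pairs of leaves \emph{separated by finitely many triangles}, i.e.\ on a dense set. Condition \eqref{cond::slithering_Hoelder} forces both $g\mapsto\Sigma_{gh}$ and $g\mapsto\Sigma'_{gh}$ to be continuous, so they coincide everywhere. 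I expect the \textbf{main obstacle} to be the convergence/H\"older estimate: one must control not just the size of each factor $\Sigma_{g_R^0 g_R^1}$ but the accumulated conjugation distortion when transporting these factors to a common basepoint in $G$, which is exactly why the metric $\dG$ is chosen left-invariant and \emph{almost} right-invariant — the $\norm{\mathrm{Ad}_{b^{-1}}}_{\mathrm{op}(\lieg)}$ factor must be shown to stay bounded along the product, using that the relevant group elements move within a bounded region determined by $\tilde k$ and $\rho$ (the boundary map $\zeta$ being H\"older continuous on the compact set $\partial_\infty\tilde S$). Establishing that uniform bound, and then summing the resulting series via Lemma \ref{lem::divergence_radius}, is the technical heart of the argument; everything else is bookkeeping with the cocycle relations.
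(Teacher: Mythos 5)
Your construction matches the paper's sketch precisely: define the basic slithering map across a single wedge via condition~(\ref{cond::slithering_triangle}), take ordered finite products over subsets of $\mathcal{C}_{gh}$, and pass to the limit using Lemma~\ref{lem::divergence_radius} and the H\"older continuity of the boundary map $\zeta$. You also correctly identify the main technical point — controlling the $\norm{\mathrm{Ad}_{b^{-1}}}_{\mathrm{op}(\lieg)}$ distortion when transporting factors across the almost-right-invariant metric $\dG$, using that the relevant flags stay in a compact set determined by $\tilde{k}$ and $\rho$ — which is exactly what the paper's choice of $\dG$ is designed to handle. Since the paper omits the full proof and defers to \cite[Section 5.1]{Bonahon_Acta}, your proposal is essentially a reconstruction of that argument, and it is sound on all but one point.

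The gap is in the uniqueness argument. You claim that pairs of leaves separated by finitely many triangles form a dense set of pairs and conclude by continuity. This density claim is generally false: for a minimal lamination the intersection $\tilde{\lambda}\cap\tilde{k}$ is a Cantor set, and between any two gap endpoints there are typically infinitely many further gaps, so the set of pairs $(g,h)$ with $\mathcal{C}_{gh}$ finite need not be dense. The correct argument avoids the density claim entirely. For an arbitrary pair $(g,h)$ and a finite subset $\mathcal{C}=\{R_1,\dots,R_m\}\subset\mathcal{C}_{gh}$, use the cocycle property~(\ref{cond::slithering_composition}) to factor $\Sigma'_{gh}$ through the chain $g,\,g_{R_1}^0,\,g_{R_1}^1,\,g_{R_2}^0,\dots,g_{R_m}^1,\,h$. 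The ``one-wedge'' factors $\Sigma'_{g_{R_i}^0 g_{R_i}^1}$ equal the corresponding $\Sigma_{g_{R_i}^0 g_{R_i}^1}$ by condition~(\ref{cond::slithering_triangle}), while the ``gap'' factors $\Sigma'_{g_{R_i}^1 g_{R_{i+1}}^0}$ (and the two end factors) tend to $\mathrm{Id}$ by condition~(\ref{cond::slithering_Hoelder}) as $\mathcal{C}\to\mathcal{C}_{gh}$, since the leaves $g_{R_i}^1$ and $g_{R_{i+1}}^0$ become arbitrarily close. Passing to the limit — again controlling the $\mathrm{Ad}$-distortion — yields $\Sigma'_{gh}=\lim_{\mathcal{C}\to\mathcal{C}_{gh}}\Sigma_{\mathcal{C}}=\Sigma_{gh}$.
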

Here, we say that two geodesics $g,h \subset \tilde{\lambda}$ are \emph{oriented in parallel} if exactly one of the orientations of $g$ and $h$ agrees with the boundary orientation of the connected component of $\tilde{S} \setminus (g \cup h)$ that separates $g$ from $h$.

The proof of Proposition \ref{prop::slithering} is analogous to the case of Hitchin representations in \cite[Section 5.1]{Bonahon_Acta}, so we omit it here and only sketch how $\Sigma_{gh}$ is constructed.
For the case that the two geodesics $g$ and $h$ are adjacent, the slithering map $\Sigma_{gh}$ is defined as in \eqref{cond::slithering_triangle}.
For $R \in \mathcal{C}_{gh}$, the geodesics $g_R^-$ and $g_R^-$ as in Figure \ref{fig::Notation_g_R} share an endpoint, so we can define $\Sigma_{R} := \Sigma_{g_R^- g_R^+}$. 
From these basic slithering maps, we can construct $\Sigma_{gh}$ as follows: 
Let $\mathcal{C} = {R_1, \dots, R_m} \subset \mathcal{C}_{gh}$ be a finite subset of components separating $g$ and $h$, labeled from $g$ to $h$.
Set $\Sigma_{\mathcal{C}} := \Sigma_{R_1} \circ \dots \circ \Sigma_{R_m}$. 
Then $\Sigma_{gh}$ is the limit of the elements $\Sigma_{\mathcal{C}}$ as $\mathcal{C}$ tends to $\mathcal{C}_{gh}$.
Note that the H\"older continuity of the boundary map $\zeta$ is crucial to ensure that the limit exists. 

\begin{remark}
\label{rem::slithering_wedges}
The basic slithering map $\Sigma_R$ exists whenever the geodesics $g_R^-$ and $g_R^+$ share an endpoint.
Thus, we can construct the slithering map $\Sigma_{gh}$ also for an arbitrary geodesic lamination $\lambda$, under the assumption that the two geodesics $g$ and $h$ are separated by wedges.
\end{remark}

Using the slithering map, we can prove Proposition \ref{prop::Close_stretches}.
\begin{proof}[Proof of Proposition \ref{prop::Close_stretches}]
Without loss of generality we can assume that  the pair of transverse parabolics $(P_g^+, P_g^-)$ associated with $g$ agrees with the pair $(P_\theta^+, P_\theta^-)$ of standard transverse parabolics. 
Then the slithering map $\Sigma_{h g}$ sends $P_\theta^\pm$ to $P_{h}^\pm$.
Note that the slithering map exists by Remark \ref{rem::slithering_wedges}, since by assumption, $g$ and $h$ are separated by wedges.
Thus, in the definition of the stretching maps, we can choose $m_{h} := \Sigma_{h g}$.
Using left-invariance and almost right-invariance of $\dG$, we have
\begin{align*}
\nonumber \dG(T^H_g, T^H_h) &= \dG(\exp(H), \Sigma_{hg} \exp(H) \Sigma_{gh})\\
&\leq  \norm{\mathrm{Ad}_{\exp(-H)}}_{\mathrm{op}(\mathfrak{g})} \dG(\Id, \Sigma_{hg}) + \dG(\Id, \Sigma_{gh}) .
\end{align*}
By H\"older continuity of the slithering map, there exist constants $C, A>0$ depending on $\tilde{k}$ and $\rho$ such that 
\begin{align*}
\dG(\Id, \Sigma_{gh}) = \dG(\Id, \Sigma_{hg}) \leq C \mathrm{d}(g, h)^A.
\end{align*}
In remains to estimate  $\norm{\mathrm{Ad}_{\exp(-H)}}_{\mathrm{op}(\lieg)}$.
We can express an element in $\mathrm{op}(\lieg)$ as matrix in $\mathrm{GL}(\lieg) \subset \mathfrak{gl}(\lieg)$, where we choose a basis of $\lieg$ adapted to the root space decomposition $\mathfrak{g} = \lieg_0 \bigoplus_{\alpha \in \Sigma} \lieg_{\alpha}$.
Consider the infinity norm $\norm{\cdot}_\infty$ on $\mathfrak{gl}(\lieg)$, that is given by the maximal absolute row sum of the matrix. 
We have $\mathrm{Ad}_{\exp(-H)} = \exp(\mathrm{ad}_{(-H)})$, which is a diagonal matrix with entries $0$ and $e^{\alpha(H)}$ for $\alpha \in \Sigma$, possibly with multiplicity. 
Define a norm on $\mathfrak{a}$ by $\norm{H}_{\mathfrak{a}} := \max_{\alpha \in \Sigma} |\alpha(H)|$.
By equivalence of norms, we have
\begin{align*}
\norm{\mathrm{Ad}_{\exp(-H)}}_{\mathrm{op}(\lieg)}   \leq B \norm{\mathrm{Ad}_{\exp(-H)}}_\infty \leq B \max_{\alpha \in \Sigma} e^{\norma{H}}
\end{align*} 
for some constant $B>0$.
Combining this with the above estimates proves the claim.
\end{proof}
The following corollary covers the special case of Proposition \ref{prop::Close_stretches} when the two geodesics bound the same connected component in $\tilde{S} \setminus \tilde{\lambda}$. 

\begin{cor}
\label{cor::adjacent_stretches}
Let $\tilde{k}$ be an oriented arc transverse to $\tilde{\lambda}$, let $R \subset \tilde{S} \setminus \tilde{\lambda}$ such that $\tilde{k} \cap R \neq \emptyset$ and let $r(R)$ be the divergence radius (see Lemma \ref{lem::divergence_radius}).
There exist constants $C, A >0$ depending on $\tilde{k}$ and $\rho$ such that
\begin{align*}
\dG\left(T^H_{g_R^0} T^{-H}_{g_R^1}, \Id\right) \leq C \left( e^{ \norma{H}}+1 \right) e^{-Ar(R)}. 
\end{align*}
\end{cor}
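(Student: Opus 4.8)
The plan is to apply Proposition \ref{prop::Close_stretches} directly, once we identify the right pair of geodesics. We take $g = g_R^0$ and $h = g_R^1$, the two oriented geodesics bounding the component $R$, both oriented positively with respect to $\tilde{k}$. Since they bound the same complementary component $R$, they are trivially separated by wedges (the set $\mathcal{C}_{g_R^0 g_R^1}$ is either empty or consists of $R$ alone, and $g_R^0$, $g_R^1$ share an endpoint of $R$), so the hypothesis of Proposition \ref{prop::Close_stretches} is satisfied. This gives
\begin{align*}
\dG(T^H_{g_R^0}, T^H_{g_R^1}) \leq C \left( (e^{\norma{H}}+1)\, \mathrm{d}(g_R^0, g_R^1)^A \right).
\end{align*}

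The next step is to convert this into the statement about $T^H_{g_R^0} T^{-H}_{g_R^1}$. Using left-invariance of $\dG$ and property (2) of Lemma \ref{lem::stretch_properties}, which says $(T^H_{g_R^1})^{-1} = T^{-H}_{g_R^1}$, we have
\begin{align*}
\dG\left(T^H_{g_R^0} T^{-H}_{g_R^1}, \Id\right) = \dG\left(T^H_{g_R^0}, \left(T^{-H}_{g_R^1}\right)^{-1}\right) = \dG\left(T^H_{g_R^0}, T^{H}_{g_R^1}\right),
\end{align*}
so the two quantities are literally equal and the bound above applies.

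The remaining ingredient is to replace $\mathrm{d}(g_R^0, g_R^1)^A$ by an exponential in the divergence radius $r(R)$. Here I would invoke Lemma \ref{lem::divergence_radius}, part (1), which controls the length $\ell(\tilde{k} \cap R)$ by $C_2 e^{-A_2 r(R)}$. Since $g_R^0$ and $g_R^1$ pass through the negative and positive endpoints of the segment $\tilde{k}\cap R$, and these geodesics share an endpoint at infinity, the distance $\mathrm{d}(g_R^0, g_R^1)$ between them (measured along $\tilde k$, or in any reasonable metric on the space of geodesics, up to comparison constants from compactness of $\tilde k$) is comparable to $\ell(\tilde{k}\cap R)$, hence bounded above by a constant times $e^{-A_2 r(R)}$. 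Raising to the power $A$ and absorbing all constants gives $\mathrm{d}(g_R^0, g_R^1)^A \leq C' e^{-A' r(R)}$ for new constants $C', A' > 0$ depending only on $\tilde k$ and $\rho$. Combining this with the displayed bound yields the claim after renaming constants.

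The main obstacle is the comparison between the geometric distance $\mathrm{d}(g_R^0, g_R^1)$ appearing in Proposition \ref{prop::Close_stretches} and the length $\ell(\tilde{k}\cap R)$ controlled by the divergence radius: one must be careful that the metric $\mathrm{d}$ on the space of geodesics is the same (or uniformly comparable on the relevant compact set) as the one implicitly used in Lemma \ref{lem::divergence_radius}, and that sharing an endpoint at infinity makes the two geodesics' separation genuinely comparable to the transverse length rather than merely bounded by it. This is a routine but slightly delicate compactness argument, using that $\tilde k$ is a fixed compact arc; everything else is a direct substitution.
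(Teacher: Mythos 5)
Your proposal follows the paper's own proof in structure and in all the main ingredients: apply Proposition~\ref{prop::Close_stretches} to the pair $g_R^0, g_R^1$, compare $\mathrm{d}(g_R^0, g_R^1)$ with $\ell(\tilde{k} \cap R)$ using compactness of $\tilde{k}$, and convert to the divergence radius via Lemma~\ref{lem::divergence_radius}.

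There is, however, a slip in the step converting the corollary's left-hand side to a form where Proposition~\ref{prop::Close_stretches} applies. You claim
\begin{align*}
\dG\left(T^H_{g_R^0} T^{-H}_{g_R^1}, \Id\right) = \dG\left(T^H_{g_R^0}, T^H_{g_R^1}\right),
\end{align*}
but cancelling $T^{-H}_{g_R^1}$ from the \emph{right} of the first argument would need right-invariance, and $\dG$ is only left-invariant (and merely almost right-invariant). Left-invariance (multiplying both entries on the left by $T^{-H}_{g_R^0}$) instead yields
\begin{align*}
\dG\left(T^H_{g_R^0} T^{-H}_{g_R^1}, \Id\right) = \dG\left(T^{-H}_{g_R^1}, T^{-H}_{g_R^0}\right).
\end{align*}
These two quantities are not ``literally equal'' in general. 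The fix is immediate: apply Proposition~\ref{prop::Close_stretches} with $H$ replaced by $-H$ and the roles of $g$ and $h$ swapped; since $\norma{-H} = \norma{H}$, the resulting bound is the same and the conclusion stands.

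A smaller point: for a non-maximal lamination, the geodesics $g_R^0$ and $g_R^1$ bounding $R$ need not share an endpoint at infinity, so it is not ``trivial'' that $R$ is a wedge. This only fails for finitely many components meeting $\tilde{k}$, so those exceptions can be absorbed into the constant $C$, but the claim as you stated it is not quite correct and deserves a one-line remark.
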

\begin{proof}
By compactness of $\tilde{k}$, there exists a constant $B>0$ depending on $\tilde{k}$ and $\rho$ such that $\mathrm{d}(g_R^0, g_R^1)\leq B \ell(\tilde{k} \cap R)$. 
The Corollary now follows from Proposition \ref{prop::Close_stretches}, using the left-invariance of the metric and  Lemma \ref{lem::divergence_radius}.
\end{proof}

\section{Transverse twisted cycles}

The amount of deformation of a cataclysm is determined by a so-called transverse cycle for the lamination $\lambda$. 
For the special case $G = \SL{2}$, these cycles take values in $\mathbb{R}$.
For the general case of $\theta$-Anosov representations, they take values in $\atheta$. 
Recall from \eqref{eq::atheta} that $\atheta = \bigcap_{\alpha \in \Delta \setminus \theta} \ker(\alpha)$. 

\begin{defi}
\label{def::transverse_cycle}
An \emph{$\mathfrak{a}_\theta$-valued transverse cycle for $\lambda$} is a map associating to each unoriented arc $k$ transverse to $\lambda$ an element $\varepsilon(k) \in \mathfrak{a}_\theta$, which satisfies the following properties:
\begin{enumerate}
\item $\varepsilon$ is finitely additive, i.e. $\varepsilon(k) = \varepsilon(k_1) + \varepsilon(k_2)$ if we split $k$ in two subarcs $k_1$, $k_2$ with disjoint interiors.
\item $\varepsilon$ is $\lambda$-invariant, i.e. $\varepsilon(k) = \varepsilon(k')$ whenever the arcs $k$ and $k'$ are homotopic via a homotopy respecting the lamination $\lambda$.
\end{enumerate}
We denote the vector space of $\mathfrak{a}_\theta$-valued transverse cycles for $\lambda$ by $\mathcal{H}(\lambda; \atheta)$.
\end{defi}

Instead of transverse cycles for the lamination $\lambda$, we focus on transverse cycles for the orientation cover $\olambda$. 
Since there is a bijective correspondence between transverse arcs for $\olambda$ and oriented transverse arcs for $\lambda$, this allows us to assign values to oriented transverse arcs transverse for $\lambda$.
Further, we require the cycles to satisfy a twist condition.

\begin{defi}
\label{def::twisted_cycles}
The space of \emph{$\mathfrak{a}_{\theta}$-valued transverse twisted cycles} for the orientation cover $\olambda$ is 
\begin{align}
\label{eq::HTwist}
\mathcal{H}^{\mathrm{Twist}}(\olambda; \atheta) := \{  \ \varepsilon \in \mathcal{H}(\olambda; \atheta) \ | \ \mathfrak{R}^
*\varepsilon =  \iota(\varepsilon) \ \},
\end{align}
where $\iota \colon \atheta \to \atheta$ is the opposition involution and $\mathfrak{R} \colon \widehat{U} \to \widehat{U}$ is the orientation-reversing involution for the orientation cover $\olambda$.
\end{defi}
In the following, if not stated otherwise, a transverse twisted cycle will always be an $\atheta$-valued transverse twisted cycle.

We can apply a transverse twisted cycle $\varepsilon \in \Htwist$ to a pair $(P,Q)$ of connected components $P,Q$ of $\tilde{S} \setminus \tilde{\lambda}$ as follows:
Recall that we can identify oriented arcs transverse to $\lambda$ with unoriented arcs transverse to the orientation cover $\olambda$. 
Let $\tilde{k}_{PQ}$ be a transverse oriented arc in $\tilde{S}$ from $P$ to $Q$, let $k_{PQ}$ be its projection to $S$ and $\widehat{k}_{PQ}$ the unique lift of $k_{PQ}$ such that the intersection of $\widehat{k}_{PQ}$ and $\olambda$ is positive. 
We define
\begin{align}
\label{eq::varepsilon_PQ}
\varepsilon(P,Q) := \varepsilon(\widehat{k}_{PQ}).
\end{align}
By the twist condition, we have $\varepsilon(Q,P) = \iota\left( \varepsilon(P,Q) \right)$.

\begin{remark}
The twist condition is motivated by fact that we use the transverse cycles as parameters for the stretching maps introduced in Section \ref{sec::stretching_maps}.
It guarantees that they behave nicely under reversing the orientation of an arc. 
More precisely, for two adjacent components $P$ and $Q$ separated by a geodesic $g$, oriented to the left as seen from $P$, we have with Lemma \ref{lem::stretch_properties},
\begin{align*}
T_{\overline{g}}^{\varepsilon(Q,P)} = T_g^{-\iota(\varepsilon(Q,P))} = T_g^{-\varepsilon(P,Q)} = \left(T_g^{\varepsilon(P,Q)}\right)^{-1}. 
\end{align*}
A similar behavior is inherited by the shearing maps that we will define in Section \ref{sec::shearing_maps}.
\end{remark}

We now compute the dimension of the space of transverse twisted cycles.

\begin{prop} 
\label{prop::dim_Htwist}
Let $G$ be a connected non-compact semisimple  real Lie group, $\theta \subset \Delta$ and let $\theta' \subset \theta$ be a maximal subset satisfying $\iota(\theta') \cap \theta' = \emptyset$. 
Then 
\begin{align*}
\mathrm{dim} \ \mathcal{H}^{\mathrm{Twist}}(\olambda; \atheta) =  |\theta| \left(-\chi(\lambda) + n_o(\lambda)\right) + | \theta'|\left(n(\lambda) - n_o(\lambda)\right),
\end{align*}
where $\chi(\lambda)$ is the Euler characteristic of $\lambda$, $n(\lambda)$ is the number of connected components of $\lambda$ and $n_o(\lambda)$ is the number of components that are orientable.
\end{prop}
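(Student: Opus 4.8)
The plan is to reduce the computation to known dimension formulas for ordinary (untwisted) real-valued transverse cycles, which by Bonahon's work have dimension $-\chi(\lambda)$ per real coordinate for a connected orientable lamination, and then to carefully account for the twist condition $\mathfrak{R}^*\varepsilon = \iota(\varepsilon)$ and the way it interacts with the orientation cover. First I would decompose $\lambda$ into its connected components $\lambda_1,\dots,\lambda_{n(\lambda)}$, so that $\mathcal{H}^{\mathrm{Twist}}(\olambda;\atheta)$ splits as a direct sum of the corresponding spaces for each $\widehat{\lambda_i}$; since dimension, Euler characteristic, and the counts $n(\lambda)$, $n_o(\lambda)$ are all additive over components, it suffices to treat a single connected lamination, separately in the orientable and non-orientable cases.

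For a \emph{connected orientable} component, the orientation cover $\widehat{\lambda_i}$ is two disjoint copies $\lambda_i^+ \sqcup \lambda_i^-$ with opposite orientations, interchanged by $\mathfrak{R}$. A transverse cycle on $\widehat{\lambda_i}$ is thus a pair $(\varepsilon^+,\varepsilon^-)$ of $\atheta$-valued transverse cycles on $\lambda_i$, and the twist condition forces $\varepsilon^- = \iota(\varepsilon^+)$, so the space is isomorphic to $\mathcal{H}(\lambda_i;\atheta)$, which is $\mathcal{H}(\lambda_i;\mathbb{R}) \otimes \atheta$ and hence has dimension $(-\chi(\lambda_i))\cdot\dim\atheta = (-\chi(\lambda_i))\,|\theta|$. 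This contributes the $|\theta|(-\chi(\lambda)+n_o(\lambda))$-type terms once one checks that $\dim\atheta = |\theta|$; note $n_o$ enters because for orientable components one should compare $-\chi$ of the lamination versus a normalization, and I would track the bookkeeping so the $n_o(\lambda)$ correction appears correctly (this is the place where one must be careful about whether $\chi(\lambda)$ counts the cover or the base, and about the constant shift in Bonahon's formula).

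For a \emph{connected non-orientable} component, the orientation cover $\widehat{\lambda_i}$ is itself connected and $\mathfrak{R}$ acts as a fixed-point-free involution on it. Here a transverse cycle $\varepsilon$ on $\widehat{\lambda_i}$ decomposes under the $\pm 1$ eigenspaces of the induced involution $\iota$ on $\atheta$: write $\atheta = \mathfrak{a}_{\theta}^{+}\oplus\mathfrak{a}_{\theta}^{-}$ with $\iota = +1$ on the first summand and $-1$ on the second. The twist condition $\mathfrak{R}^*\varepsilon=\iota(\varepsilon)$ then says the $\mathfrak{a}_{\theta}^{+}$-component of $\varepsilon$ is $\mathfrak{R}$-invariant (hence descends to a transverse cycle on $\lambda_i$ itself, of which the space is $-\chi(\lambda_i)$-dimensional over $\mathbb{R}$ — or the appropriate count for the non-orientable base) while the $\mathfrak{a}_{\theta}^{-}$-component is $\mathfrak{R}$-anti-invariant. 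I would compute the dimension of each piece using the transfer/averaging argument for the $\mathbb{Z}/2$-action: the invariants have dimension equal to that of cycles on the quotient $\lambda_i$, and the anti-invariants make up the rest of $\mathcal{H}(\widehat{\lambda_i};\mathbb{R})$, whose total dimension is $-\chi(\widehat{\lambda_i}) = -2\chi(\lambda_i)$ when $\widehat{\lambda_i}$ double-covers — but here one must instead use the precise count of $\mathcal{H}(\widehat{\lambda};\mathbb{R})$ in terms of edges and switches of the underlying train track, which for a \emph{maximal} lamination is where the $6g(S)-6$ appears. Matching $\dim\mathfrak{a}_{\theta}^{-} = |\theta| - |\theta'|$ (since $\theta'$ is a maximal subset of $\theta$ with $\iota(\theta')\cap\theta'=\emptyset$, the involution $\iota$ on $\atheta$ has a $(+1)$-eigenspace of dimension $|\theta|-|\theta'|$ indexed by the $\iota$-orbits and a $(-1)$-eigenspace of dimension $|\theta'|$... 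I would verify this linear-algebra identity carefully, as the roles of $+$ and $-$ and the exact dimension of each eigenspace in terms of $|\theta|$ and $|\theta'|$ is the crux) then yields the $|\theta'|(n(\lambda)-n_o(\lambda))$ correction, since only non-orientable components contribute a term of the anti-invariant type beyond the universal $|\theta|$-part.

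The main obstacle I anticipate is precisely this last bookkeeping: correctly identifying, for a non-orientable connected component, which eigenspace of $\iota$ on $\atheta$ pairs with the $\mathfrak{R}$-invariant versus $\mathfrak{R}$-anti-invariant cycles on $\widehat{\lambda}$, and getting the dimension of the $\mathfrak{R}$-anti-invariant transverse cycles on a connected orientation double cover right (it is \emph{not} simply half of $\mathcal{H}(\widehat{\lambda};\mathbb{R})$ in general; one needs the relation between cycles on the cover and cycles-with-coefficients-in-the-orientation-local-system on the base, together with the correct value of $\dim \mathcal{H}(\widehat{\lambda};\mathbb{R})$ from train track combinatorics). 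Once the single-component orientable and non-orientable counts are pinned down, summing over components and substituting $\dim\atheta = |\theta|$, $\dim\mathfrak{a}_{\theta}^{+\text{-part relevant to }\iota} $ and its complement, gives the stated formula.
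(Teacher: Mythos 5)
Your overall strategy is the same as the paper's: decompose $\atheta$ into the $(\pm 1)$-eigenspaces of the opposition involution $\iota$, decompose $\mathcal{H}(\olambda;\mathbb{R})$ into the $(\pm 1)$-eigenspaces of $\mathfrak{R}^*$, observe that the twist condition $\mathfrak{R}^*\varepsilon = \iota(\varepsilon)$ pairs the $+$ with the $+$ and the $-$ with the $-$, and then invoke Bonahon's dimension formula. The paper does this globally (writing $\varepsilon = \sum_\alpha \varepsilon_\alpha H_\alpha$ with respect to a basis satisfying $\iota(H_\alpha) = H_{\iota(\alpha)}$ and splitting each $\varepsilon_\alpha$ into $\varepsilon_\alpha^\pm$) rather than component-by-component as you propose, but the two organizations are equivalent since all of $\chi$, $n$, $n_o$, and the dimension are additive over connected components.

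However, your sketch does not close the argument, and it contains two concrete errors in exactly the places you flag as ``the crux.'' First, for a connected orientable component $\lambda_i$ you write $\dim\mathcal{H}(\lambda_i;\mathbb{R}) = -\chi(\lambda_i)$, but Bonahon's formula is $-\chi(\lambda_i) + n_o(\lambda_i) = -\chi(\lambda_i) + 1$; this is precisely the missing $n_o(\lambda)$ in the final answer. Second, you state $\dim\mathfrak{a}_\theta^- = |\theta| - |\theta'|$ and then, in the same sentence, that the $(-1)$-eigenspace has dimension $|\theta'|$. The correct count is: the basis $\{H_\alpha\}_{\alpha\in\theta}$ consists of $|\theta| - 2|\theta'|$ fixed points of $\iota$ and $|\theta'|$ orbits of size two, hence $\dim\mathfrak{a}_\theta^+ = |\theta| - |\theta'|$ and $\dim\mathfrak{a}_\theta^- = |\theta'|$. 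The remaining inputs you would need but do not supply are $\dim \mathcal{H}(\olambda;\mathbb{R})^+ = -\chi(\lambda) + n_o(\lambda)$ (via the identification with $\mathcal{H}(\lambda;\mathbb{R})$) and $\dim \mathcal{H}(\olambda;\mathbb{R})^- = -\chi(\lambda) + n(\lambda)$ (by subtraction from $\dim\mathcal{H}(\olambda;\mathbb{R}) = -2\chi(\lambda) + n_o(\lambda) + n(\lambda)$). With those four numbers, the result is
\begin{align*}
\dim\Htwist = (|\theta| - |\theta'|)\bigl(-\chi(\lambda)+n_o(\lambda)\bigr) + |\theta'|\bigl(-\chi(\lambda)+n(\lambda)\bigr),
\end{align*}
which simplifies to the claimed formula. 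So the idea is right, but the proof as written stops short of a computation and the numbers you do write down do not yet consistently produce it.
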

\begin{proof}
Let $\varepsilon \in \Htwist$ and let $\{ H_\alpha\}_{\alpha \in \theta}$ be a basis for $\atheta$ satisfying $\iota(H_\alpha) = H_{\iota(\alpha)}$ for all $\alpha \in \theta$.
Write $\varepsilon = \sum_{\alpha \in \theta} \varepsilon_\alpha H_\alpha$, where $\varepsilon_\alpha \in \mathcal{H}(\olambda; \mathbb{R})$ is an $\mathbb{R}$-valued transverse cycle. 
The twist condition is equivalent to $\mathfrak{R}^*\varepsilon_\alpha = \varepsilon_{\iota(\alpha)}$ for all $\alpha$.
We can split up $\varepsilon_\alpha$ as $\varepsilon_\alpha = \varepsilon_\alpha^+ + \varepsilon_\alpha^-$, where $\varepsilon_\alpha^{\pm}$ lies in the $(\pm 1)$-eigenspace $\mathcal{H}(\olambda; \mathbb{R})^\pm$ of $\mathfrak{R}^*$.
By the twist condition, we have $\varepsilon_{\iota(\alpha)}^+ = \varepsilon_\alpha^+$ and $\varepsilon_{\iota(\alpha)}^- = - \varepsilon_\alpha^{-}$. 
Thus, we have
\begin{align*}
\mathrm{dim} \ \mathcal{H}^{\mathrm{Twist}}(\olambda; \atheta) &= |\theta'| \ \mathrm{dim} \ \mathcal{H}(\widehat{\lambda}; \mathbb{R}) + | \mathrm{Fix}(\iota) \cap \theta |  \ \mathcal{H}(\widehat{\lambda}; \mathbb{R})^+,
\end{align*}
where $\mathrm{Fix}(\iota) \cap \theta$ is the set of elements in $\theta$ that are fixed under $\iota$.
By a result from Bonahon \cite[Proposition 1]{Bonahon_Shearing}, the dimension of $\mathcal{H}(\lambda; \mathbb{R})$ is  $-\chi(\lambda)+ n_o(\lambda)$ and the dimension of $\mathcal{H}(\olambda; \mathbb{R})$ is $-2\chi(\lambda)+n_o(\lambda)+n(\lambda)$.
Using the one-to-one correspondence between arcs transverse to $\olambda$ and oriented arcs transverse to $\lambda$, we can identify $\mathcal{H}(\widehat{\lambda}; \mathbb{R})^+$  with $\mathcal{H}(\lambda; \mathbb{R})$, so $\mathrm{dim} \mathcal{H}(\olambda; \mathbb{R})^+$ is equal to $-\chi(\lambda) + n_o(\lambda)$.
The claim now follows from a short computation, taking into account that $| \theta |  =  | \mathrm{Fix}(\iota) \cap \theta  | + 2|\theta'|$.
\end{proof}
\begin{cor}
\label{cor::dim_Htwist_maximal}
If the lamination $\lambda$ is maximal, we have
\begin{align*}
\mathrm{dim} \ \mathcal{H}^{\mathrm{Twist}}(\olambda; \atheta) =  |\theta| (6\cdot g(S)-6) + | \theta'|.
\end{align*}
\end{cor}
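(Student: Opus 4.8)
The statement is a direct specialization of Proposition~\ref{prop::dim_Htwist}: the plan is simply to substitute the values of $\chi(\lambda)$, $n(\lambda)$ and $n_o(\lambda)$ that hold for a maximal lamination and then do the (trivial) arithmetic. So the real content is collecting the relevant structural facts about maximal laminations.

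First I would record the three facts I need about a maximal lamination $\lambda$ on the closed surface $S$ of genus $g(S)$: (i) $\lambda$ is connected, so $n(\lambda)=1$; (ii) $\lambda$ is non-orientable, so $n_o(\lambda)=0$; and (iii) $-\chi(\lambda)+n_o(\lambda)=6g(S)-6$, equivalently $\dim\mathcal{H}(\lambda;\mathbb{R})=6g(S)-6$. Fact (iii) is classical: the complement $S\setminus\lambda$ consists of $4g(S)-4$ ideal triangles, and the space of real transverse cocycles of a maximal lamination has dimension $6g(S)-6$, matching $\dim\mathrm{Teich}(S)$ via Thurston--Bonahon shear coordinates; I would cite \cite{Bonahon_Shearing} and \cite{Bonahon_Laminations}. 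For (ii) the point is that near any complementary ideal triangle the three boundary leaves cannot carry a coherent orientation: in the upper-half-plane model of the triangle with ideal vertices $0,1,\infty$, requiring continuity of the orientation along the two leaves asymptotic at each spike forces the third (semicircular) leaf to be oriented from $0$ to $1$ near the spike $0$ and from $1$ to $0$ near the spike $1$, a contradiction. Hence no connected component adjacent to a complementary triangle -- and therefore no component at all -- can be orientable. For (i) I would invoke the standard structure theory of maximal laminations (every complementary region is a disk, which forces connectedness); alternatively, one only needs the weaker combination $n(\lambda)-n_o(\lambda)=1$.

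With these in hand, Proposition~\ref{prop::dim_Htwist} gives
\[
\dim\,\Htwist = |\theta|\bigl(-\chi(\lambda)+n_o(\lambda)\bigr)+|\theta'|\bigl(n(\lambda)-n_o(\lambda)\bigr)= |\theta|(6g(S)-6)+|\theta'|,
\]
which is the assertion. In the special case $G=\mathrm{PSL}(n,\mathbb{R})$, $\theta=\Delta$ one recovers \cite[Lemma~16]{Dreyer_Cataclysms} after identifying $|\theta'|=\lfloor (n-1)/2\rfloor$ with Dreyer's count of diagram-involution orbits.

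The only genuine obstacle is (ii)--(i): getting the structural input about maximal laminations stated and cited correctly. In particular the identity $n(\lambda)-n_o(\lambda)=1$ is what makes the $|\theta'|$-term a pure constant rather than something scaling with the number of components, so I would make sure to justify that a maximal lamination is connected and non-orientable (or at least has exactly one non-orientable component) before invoking Proposition~\ref{prop::dim_Htwist}; everything after that is a one-line substitution.
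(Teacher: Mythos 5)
Your reduction to Proposition~\ref{prop::dim_Htwist} is exactly the intended route, and the arithmetic is correct: with $n(\lambda)=1$, $n_o(\lambda)=0$, and $-\chi(\lambda)=6g(S)-6$ (from the $4g(S)-4$ complementary ideal triangles), the formula collapses to $|\theta|(6g(S)-6)+|\theta'|$. The three structural facts you record are the right ones, and all three are true and citable.

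Two of your justifications, however, have gaps. The parenthetical ``every complementary region is a disk, which forces connectedness'' is not a valid one-liner: having disk complementary regions does not by itself force a closed subset to be connected. What actually makes it work is that the three boundary leaves of any complementary ideal triangle lie in the same connected component of $\lambda$: near each spike the two asymptotic boundary leaves become arbitrarily close in the compact surface $S$, hence accumulate onto a common point of $\lambda$, so their closures (which are connected and contained in $\lambda$) meet. Once one knows each triangle's boundary sits in a single component, a partition argument (write $S$ as the disjoint union of $\lambda_i\cup\bigcup\{T:\partial T\subset\lambda_i\}$ and check each piece is closed) gives connectedness. Relatedly, your parity argument near a complementary triangle is sound, but the conclusion ``no connected component adjacent to a complementary triangle can be orientable'' implicitly assumes the three boundary leaves lie in one component; if they were spread over several components the parity contradiction would not single out which one fails to be orientable. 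So the logical order should be: (a) establish that each complementary triangle has all three sides in one component (the accumulation argument), hence $\lambda$ is connected; (b) then the parity argument shows this single component is non-orientable. Also note your ``weaker combination $n(\lambda)-n_o(\lambda)=1$'' remark is not actually weaker: since $-\chi(\lambda)=6g(S)-6$ is forced by the triangle count, $-\chi+n_o=6g-6$ already pins down $n_o=0$, and then $n-n_o=1$ forces $n=1$. None of this changes the conclusion — the corollary is a one-line substitution once the structural facts are in hand — but as written the justification of (i) would not survive scrutiny and should either be replaced by the accumulation argument or by a precise citation.
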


We conclude this section with an estimate for transverse cycles.
Let $\tilde{k}$ be an oriented arc transverse to $\tilde{\lambda}$ between two connected components $P$ and $Q$ of $\tilde{S} \setminus \tilde{\lambda}$.

Let $\norm{\cdot}_{\atheta}$ be the maximum norm with respect to a basis $\{H_\alpha\}_{\alpha \in \theta}$ for $\atheta$, i.e.\ if $X = \sum_{\alpha \in \theta} x_\alpha H_\alpha$ with $x_\alpha \in \mathbb{R}$, then $\norm{X}_{\atheta} = \max_{\alpha \in \theta} |x_\alpha|$. 
Further, on $\mathcal{H}(\olambda; \atheta)$ consider the norm $\norm{\varepsilon}_{\mathcal{H}(\olambda; \atheta)} = \max_{\alpha \in \theta} \norm{\varepsilon_\alpha}_{\mathcal{H}(\olambda; \mathbb{R})}$ for a transverse cycle $\varepsilon = \sum_{\alpha \in \theta} \varepsilon_\alpha H_\alpha$ for some fixed norm on $\mathcal{H}(\olambda; \mathbb{R})$.

\begin{lemma}
\label{lem::cycle_estimate}
There exists some constant $C > 0$, depending on $\tilde{k}$, such that for every transverse cycle $\varepsilon \in \mathcal{H}(\olambda;\atheta)$, for every $R \in  \mathcal{C}_{PQ}$, 
\begin{align*}
\norm{\varepsilon(P,R)}_{\atheta} \leq C \norm{\varepsilon}_{\mathcal{H}(\olambda; \atheta)} (r(R) +1).
\end{align*}
\end{lemma}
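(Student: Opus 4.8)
The plan is to reduce the $\atheta$-valued estimate to the scalar case and then exploit the divergence-radius growth control from Lemma~\ref{lem::divergence_radius}. Writing $\varepsilon = \sum_{\alpha \in \theta} \varepsilon_\alpha H_\alpha$ with $\varepsilon_\alpha \in \mathcal{H}(\olambda;\mathbb{R})$, by definition of the maximum norms we have $\norm{\varepsilon(P,R)}_{\atheta} = \max_{\alpha} |\varepsilon_\alpha(\widehat{k}_{PR})|$ and $\norm{\varepsilon}_{\mathcal{H}(\olambda;\atheta)} = \max_\alpha \norm{\varepsilon_\alpha}_{\mathcal{H}(\olambda;\mathbb{R})}$, so it suffices to prove the claim for a single $\mathbb{R}$-valued transverse cycle $\varepsilon_\alpha$, i.e.\ to show $|\varepsilon_\alpha(\widehat{k}_{PR})| \leq C \norm{\varepsilon_\alpha}_{\mathcal{H}(\olambda;\mathbb{R})}(r(R)+1)$ with $C$ independent of $\alpha$ and $\varepsilon_\alpha$. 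The arc $\widehat{k}_{PR}$ is (the lift of) a subarc of $\tilde{k}$, namely the piece from $P$ to $R$.

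First I would set up the combinatorial decomposition of this subarc. The components $R' \in \mathcal{C}_{PR}$ through which the subarc of $\tilde k$ from $P$ to $R$ passes are exactly those separating $P$ from $R$, and one has $\mathcal{C}_{PR} \subset \mathcal{C}_{PQ}$ with divergence radii bounded by $r$; moreover by Lemma~\ref{lem::divergence_radius}(2), for each $N$ the number of $R' \in \mathcal{C}_{PR}$ with $r(R') = N$ is bounded by a universal constant $B_0$, and by Lemma~\ref{lem::divergence_radius}(1) the lengths $\ell(\tilde k \cap R')$ decay like $e^{-A_2 r(R')}$. The key point is that the subarc only meets finitely many leaves before reaching $R$: indeed a tightly transverse arc crosses each leaf at most once and the crossed leaves correspond bijectively to the ``gaps'' between consecutive components, so the number of leaves crossed between $P$ and $R$ is comparable to $\#\mathcal{C}_{PR}$. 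Finite additivity of $\varepsilon_\alpha$ then lets me write $\varepsilon_\alpha(\widehat{k}_{PR})$ as a sum over these finitely many crossings.

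The heart of the argument is a uniform bound on $\#\mathcal{C}_{PR}$ in terms of $r(R)$. Since $\mathcal{C}_{PR} \subseteq \{R' : r(R') \leq r(R)\} \cap \mathcal{C}_{PQ}$ — any component strictly between $P$ and $R$ is separated from, say, $R$ by fewer leaves than the ones defining $r(R)$, hence has divergence radius at most $r(R)$ (one should cite the precise definition in \cite[Section 1]{Bonahon_Shearing} here, or argue via the geometry of $\tilde k$) — we get
\begin{align*}
\#\mathcal{C}_{PR} \leq \sum_{N=0}^{r(R)} \#\{R' \in \mathcal{C}_{PQ} : r(R') = N\} \leq B_0\,(r(R)+1).
\end{align*}
Now I estimate the contribution of a single crossing: each crossed leaf lies on the boundary of two components $R'$ adjacent along $\tilde k$, and the absolute value of $\varepsilon_\alpha$ on the corresponding elementary transverse arc is controlled by $\norm{\varepsilon_\alpha}_{\mathcal{H}(\olambda;\mathbb{R})}$ times a constant depending only on $\tilde k$ (by $\lambda$-invariance, each elementary crossing-arc is homotopic, respecting the lamination, into a fixed compact family determined by $\tilde k$, on which any transverse cycle is bounded by its norm up to a fixed multiplicative constant). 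Summing these $\#\mathcal{C}_{PR}$ contributions and using the displayed bound gives $|\varepsilon_\alpha(\widehat{k}_{PR})| \leq C' \norm{\varepsilon_\alpha}_{\mathcal{H}(\olambda;\mathbb{R})}(r(R)+1)$ with $C'$ depending only on $\tilde k$, and taking the maximum over $\alpha \in \theta$ finishes the proof.

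The main obstacle I anticipate is making rigorous the claim that $\#\mathcal{C}_{PR} = O(r(R))$: this requires unwinding the precise definition of the divergence radius from \cite[Section 1]{Bonahon_Shearing} to see that every component separating $P$ from $R$ has divergence radius no larger than $r(R)$ (intuitively, it is ``closer'' to the leaves bounding $R$, so needs at most as many separating lifts). A secondary technical point is the uniformity over $\alpha$ of the per-crossing constant, which is handled cleanly by choosing the basis $\{H_\alpha\}$ once and for all and working coordinate-wise, so that a single compactness argument on the finitely many homotopy types of elementary transverse subarcs of $\tilde k$ suffices.
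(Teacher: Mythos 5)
The first paragraph of your proof — reducing the $\atheta$-valued statement to the $\mathbb{R}$-valued case by decomposing $\varepsilon = \sum_\alpha \varepsilon_\alpha H_\alpha$ and comparing max norms — matches what the paper does. However, the paper then simply invokes Bonahon's Lemma~6 from \cite{Bonahon_Shearing} for the scalar case, whereas you go on to re-derive it, and your re-derivation contains a genuine gap.

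The flaw is the combinatorial claim $\mathcal{C}_{PR} \subseteq \{R' : r(R') \leq r(R)\}$, and with it the cardinality bound $\#\mathcal{C}_{PR} \leq B_0(r(R)+1)$. This is false. The divergence radius of $R'$ is not governed by how many leaves separate $R'$ from $R$; it is (roughly) the minimum, over the two boundary geodesics $g$ and $h$, of the number of lifts of the \emph{projection of $R'$} lying between $R'$ and that boundary geodesic. A component $R'$ lying between $P$ and $R$ can easily have much \emph{larger} divergence radius than $R$ itself. Concretely, take $\lambda$ a single simple closed geodesic $\gamma$ and a transverse arc $k$ crossing $\gamma$ ten times, so $\mathcal{C}_{PQ} = \{R_2, \dots, R_{10}\}$. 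Then $r(R_{10})$ is as small as it can be (it is adjacent to $h$), yet $\mathcal{C}_{P R_{10}} = \{R_2, \dots, R_9\}$ has eight elements, several with divergence radius $2$ or $3$. So $\#\mathcal{C}_{PR}$ is not $O(r(R))$, and the naive sum-over-crossings bound breaks down. This is exactly why Bonahon's argument is subtler: it must exploit, among other things, that $\varepsilon(P,R) = \varepsilon(\widehat{k}) - \varepsilon(R,Q)$ so that components near $Q$ are handled via the other side of the arc, and more generally it has to control repeated crossings of the same projected gap rather than just counting components. Replacing your second and third paragraphs by a citation to \cite[Lemma 6]{Bonahon_Shearing}, as the paper does, is the clean fix; alternatively one would need to carry out Bonahon's telescoping argument in full, which is substantially more involved than a cardinality count.

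A secondary, less serious issue: your per-crossing bound ("each elementary crossing-arc is homotopic into a fixed compact family, so $|\varepsilon_\alpha|$ on it is $\leq C\|\varepsilon_\alpha\|$") is stated rather loosely and would need a genuine compactness argument over homotopy classes of elementary transverse arcs; since it is not the load-bearing step, I only flag it.
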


\begin{proof}
In \cite[Lemma 6]{Bonahon_Shearing}, the statement is proven for $\mathbb{R}$-valued transverse cycles. 
The more general case of $\atheta$-valued transverse cycles follows from this by writing $\varepsilon = \sum_{\alpha \in \theta} \varepsilon_\alpha H_\alpha$ and the definition of the norms.
\end{proof}

\section{Cataclysm deformations}
\label{sec::cataclysms}

\subsection{Shearing maps}
\label{sec::shearing_maps}

To a $\theta$-Anosov representation $\rho \colon \pi_1(S) \to G$ and a transverse twisted cycle $\varepsilon \in \Htwist$, we now assign a family $\{\varphi_{PQ}^\varepsilon\}_{(P,Q)}$ of elements in $G$, called \emph{shearing maps}, where $(P,Q)$ ranges over all pairs $(P,Q)$ of connected components of $\tilde{S} \setminus \tilde{\lambda}$.

Let $\mathcal{C} = \{ R_1, \dots, R_m\} \subset \mathcal{C}_{PQ}$ be a finite subset of connected components of $\tilde{S} \setminus \tilde{\lambda}$ that lie between $P$ and $Q$, labeled from $P$ to $Q$.
Recall that we can apply $\varepsilon$ to a pair of connected components of $\tilde{S} \setminus \tilde{\lambda}$ as in \eqref{eq::varepsilon_PQ}.
Define
\begin{align*}
\varphi_{\mathcal{C}}^\varepsilon := \left( T_{g_1^-}^{\varepsilon(P, R_1)}  T_{g_1^+}^{-\varepsilon(P, R_1)}  \right) \dots \left( T_{g_m^-}^{\varepsilon(P, R_m)}  T_{g_m^+}^{-\varepsilon(P, R_m)}  \right)  T_{g_Q^-}^{\varepsilon(P, Q)},
\end{align*}
where $g_i^\pm := g_{R_i}^{\pm}$ (see Figure \ref{fig::Notation_g_R}).
\begin{prop}
\label{prop::shearing_map}
Let $\tilde{k}$ be an arc transverse to $\tilde{\lambda}$ connecting $P$ to $Q$.
There exists a constant $B >0$ depending on $\tilde{k}$  and  the representation $\rho$ such that for $\varepsilon \in \Htwist$ with $\norm{\varepsilon} _{\mathcal{H}^{\mathrm{Twist}}(\widehat{\lambda};\mathfrak{a}_\theta)} < B$, the limit
\begin{align*}
\varphi^{\varepsilon}_{PQ} := \lim_{\mathcal{C} \to \mathcal{C}_{PQ}} \varphi^{\varepsilon}_{\mathcal{C}}
\end{align*}
exists.
\end{prop}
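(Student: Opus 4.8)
The plan is to show that the net $\{\varphi_\mathcal{C}^\varepsilon\}$, indexed by finite subsets $\mathcal{C}\subset\mathcal{C}_{PQ}$ ordered by inclusion, is Cauchy in the complete metric space $(G,\dG)$, hence convergent. The key observation is that passing from $\mathcal{C}$ to a larger finite set $\mathcal{C}'$ amounts, at each newly inserted component $R$, to replacing a factor $T_{g_R^-}^{\varepsilon(P,R)}$ (coming from the stretching attached to the geodesic bounding $R$ on the $P$-side, which appears implicitly through telescoping) by a product $T_{g_R^-}^{\varepsilon(P,R)}\bigl(T_{g_R^0}^{H}T_{g_R^1}^{-H}\bigr)$-type correction with $H=\varepsilon(P,R)$; more precisely, the difference between $\varphi_{\mathcal{C}}^\varepsilon$ and $\varphi_{\mathcal{C}'}^\varepsilon$ is controlled, after using left-invariance and almost right-invariance of $\dG$ to move everything into one slot, by a sum over the inserted components $R$ of terms of the form $\dG\bigl(T_{g_R^0}^{\varepsilon(P,R)}T_{g_R^1}^{-\varepsilon(P,R)},\Id\bigr)$, each multiplied by an operator-norm factor $\norm{\mathrm{Ad}_{b^{-1}}}_{\mathrm{op}(\lieg)}$ coming from the partial products sitting to the right.

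First I would make the telescoping structure precise: rewrite $\varphi_\mathcal{C}^\varepsilon$ so that consecutive factors pair up as $T_{g_i^+}^{-\varepsilon(P,R_i)}T_{g_{i+1}^-}^{\varepsilon(P,R_{i+1})}$, and note that inserting a component $R$ between $R_i$ and $R_{i+1}$ introduces exactly one extra ``wedge'' factor $T_{g_R^-}^{\varepsilon(P,R)}T_{g_R^+}^{-\varepsilon(P,R)}$ (up to the $\varepsilon(R_i,R)$-vs-$\varepsilon(P,R)$ bookkeeping, which is handled by finite additivity of $\varepsilon$ and the twist condition). Then I would invoke Corollary~\ref{cor::adjacent_stretches} to bound $\dG\bigl(T_{g_R^0}^{\varepsilon(P,R)}T_{g_R^1}^{-\varepsilon(P,R)},\Id\bigr)\le C(e^{\norma{\varepsilon(P,R)}}+1)e^{-Ar(R)}$, and Lemma~\ref{lem::cycle_estimate} to bound $\norma{\varepsilon(P,R)}\le\norm{\varepsilon(P,R)}_{\atheta}\le C'\norm{\varepsilon}_{\mathcal{H}^{\mathrm{Twist}}(\widehat{\lambda};\mathfrak{a}_\theta)}(r(R)+1)$. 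Choosing $B>0$ small enough (depending on $\tilde k$ and $\rho$, via the constants $A,C,C'$ and the exponential-growth constant $A_1$ hidden in the divergence radius) so that $\norm{\varepsilon}<B$ forces $e^{\norma{\varepsilon(P,R)}}e^{-Ar(R)}\le e^{-(A/2)r(R)}$, say, makes each inserted term summable against the count of components with a given divergence radius.

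The main obstacle is the operator-norm factors $\norm{\mathrm{Ad}_{b^{-1}}}_{\mathrm{op}(\lieg)}$ that appear when transporting the estimate for an inserted wedge past the remaining factors of $\varphi_\mathcal{C}^\varepsilon$ to its right: a priori these could grow with $\mathcal{C}$ and destroy summability. The fix is to observe that each such $b$ is itself a product of stretching maps $T_{g}^{\pm\varepsilon(P,R_j)}$, so that $\norm{\mathrm{Ad}_{b^{-1}}}_{\mathrm{op}(\lieg)}$ is bounded by $\prod_j\norm{\mathrm{Ad}_{\exp(\mp\varepsilon(P,R_j))}}_{\mathrm{op}(\lieg)}$ up to conjugation by the (Hölder-close-to-identity) slithering maps, and hence by $\exp\bigl(\sum_j\norma{\varepsilon(P,R_j)}\bigr)$ up to a fixed multiplicative constant; using Lemma~\ref{lem::cycle_estimate} termwise and property~\eqref{item::orientation} of Lemma~\ref{lem::stretch_properties} to see massive cancellation (the $T_{g_i^+}^{-\varepsilon(P,R_i)}$ and $T_{g_{i+1}^-}^{\varepsilon(P,R_{i+1})}$ nearly cancel because $g_i^+$ and $g_{i+1}^-$ are close), one shows $\sum_j\norma{\varepsilon(P,R_j)}$ is bounded independently of $\mathcal{C}$ once $\norm{\varepsilon}<B$. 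With that uniform bound on the conjugation factors in hand, the Cauchy estimate becomes $\dG(\varphi_\mathcal{C}^\varepsilon,\varphi_{\mathcal{C}'}^\varepsilon)\le\mathrm{const}\cdot\sum_{R\in\mathcal{C}'\setminus\mathcal{C}}e^{-(A/2)r(R)}$, which tends to $0$ as $\mathcal{C},\mathcal{C}'\to\mathcal{C}_{PQ}$ by part~(2) of Lemma~\ref{lem::divergence_radius}; completeness of $(G,\dG)$ then gives the limit.
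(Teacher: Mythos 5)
Your overall architecture---pass to a Cauchy estimate on the partial products, bound each inserted factor by Corollary~\ref{cor::adjacent_stretches} and Lemma~\ref{lem::cycle_estimate}, and choose $B$ so that the resulting exponential sums converge---matches the paper's proof in spirit, and you correctly identify the crux: controlling the operator-norm factor $\norm{\mathrm{Ad}_{b^{-1}}}_{\mathrm{op}(\lieg)}$ that appears when an inserted wedge is transported across the partial product to its right.

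Your proposed fix for that operator-norm factor, however, does not work. You bound $\norm{\mathrm{Ad}_{b^{-1}}}_{\mathrm{op}(\lieg)}$ by $\exp\bigl(\sum_j\norma{\varepsilon(P,R_j)}\bigr)$ and then assert that $\sum_j\norma{\varepsilon(P,R_j)}$ is bounded independently of $\mathcal{C}$ for small $\norm{\varepsilon}$. That sum is not bounded: Lemma~\ref{lem::cycle_estimate} gives $\norma{\varepsilon(P,R_j)} \leq C\norm{\varepsilon}(r(R_j)+1)$, and Lemma~\ref{lem::divergence_radius} says there are roughly a bounded number of components at each divergence radius, so $\sum_{R\in\mathcal{C}}\norma{\varepsilon(P,R)}$ grows like $\norm{\varepsilon}\cdot N^2$ as $\mathcal{C}$ exhausts $\mathcal{C}_{PQ}$, regardless of how small $\norm{\varepsilon}$ is. The ``massive cancellation'' you invoke does not exist at the level of these scalar quantities, and the pairing you describe does not cancel either: the product $T_{g_i^+}^{-\varepsilon(P,R_i)}\,T_{g_{i+1}^-}^{\varepsilon(P,R_{i+1})}$ carries two \emph{different} exponents, so even when $g_i^+$ and $g_{i+1}^-$ coincide or are close it is near $T_{g_i^+}^{\varepsilon(R_i,R_{i+1})}$, not near $\Id$. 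Moreover, item~\eqref{item::orientation} of Lemma~\ref{lem::stretch_properties} concerns orientation reversal of a single geodesic and has no bearing on adjacent-geodesic cancellation. The cancellation that actually matters happens inside each pair $T_{g_j^0}^{\varepsilon(P,R_j)}\,T_{g_j^1}^{-\varepsilon(P,R_j)}$ (same exponent, geodesics exponentially close in $r(R_j)$), and it lives in $G$, not in the sum of Lie-algebra norms---passing to $\exp\bigl(\sum_j\norma{\cdot}\bigr)$ throws that cancellation away.

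The paper repairs this with a different intermediate step. Setting $\psi_{\mathcal{C}}^\varepsilon := \varphi_{\mathcal{C}}^{\varepsilon}\,T_{g_Q^-}^{-\varepsilon(P,Q)}$, it first shows $\dG(\psi_{\mathcal{C}}^\varepsilon,\Id)$ is bounded uniformly over all finite $\mathcal{C}\subset\mathcal{C}_{PQ}$, using only left-invariance of $\dG$ to peel off the factors $T_{g_j^0}^{\varepsilon(P,R_j)}\,T_{g_j^1}^{-\varepsilon(P,R_j)}$ one at a time (no $\mathrm{Ad}$-factor appears at this stage), then Corollary~\ref{cor::adjacent_stretches}, Lemma~\ref{lem::cycle_estimate} and Lemma~\ref{lem::divergence_radius}. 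Since the partial products $\psi_{\mathcal{C}'}^\varepsilon$ therefore all lie in a fixed compact set depending only on $\tilde{k}$ and $\rho$, the operator norms $\norm{\mathrm{Ad}_{(\psi_{\mathcal{C}'}^\varepsilon)^{-1}}}_{\mathrm{op}(\lieg)}$ are uniformly bounded by continuity of $\mathrm{Ad}$---this is the bound you need in the Cauchy step, and it replaces your unbounded $\exp\bigl(\sum_j\norma{\varepsilon(P,R_j)}\bigr)$. Only in the second step (inserting one component $R$ at a time) does the paper invoke almost right-invariance together with that uniform $\mathrm{Ad}$-bound, and the rest of your Cauchy argument then goes through.
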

\begin{defi}
For $P,Q \in \tilde{S} \setminus \tilde{\lambda}$, the element $\varphi_{PQ}^\varepsilon \in G$ is called the \emph{shearing map} from $P$ to $Q$ with respect to the shearing parameter $\varepsilon\in \Htwist$.
\end{defi}
Note that the shearing maps also depend on the representation $\rho$, which is not reflected in the notation.
If two $\theta$-Anosov representations $\rho, \rho'$ are conjugated, then also the corresponding shearing maps are conjugated by the same element.

\begin{proof}[Proof of Proposition \ref{prop::shearing_map}]
Let $\mathcal{C} \subset \mathcal{C}_{PQ}$ be as above. 
Define
\begin{align*}
\psi_{\mathcal{C}}^\varepsilon := \varphi_{\mathcal{C}}^{\varepsilon} T_{g_Q^-}^{-\varepsilon(P,Q)}.
\end{align*}
The first step of the proof is to show that $\dG(\psi_{\mathcal{C}}^\varepsilon, \Id)$ is uniformly bounded, the bound depending on $\tilde{k}$ and $\rho$.
Without loss of generality assume that for all $R \in \mathcal{C}_{PQ}$, $g_R^0$ and $g_R^1$ share an endpoint. 
We can do so because there are only finitely many components that do not have this property, thus this just changes the uniform bound by an additive constant.
By the triangle inequality and left-invariance of $\dG$, we have
\begin{align*}
\dG(\psi^{\varepsilon}_{\mathcal{C}}, \Id)
&\leq \sum_{j=1}^{m} \dG\left(T^{\varepsilon(P,R_j)}_{g_j^0}  T^{-\varepsilon(P,R_j)}_{g_j^1},  \Id \right) \\
&\leq  C_1 \sum_{j=1}^{m} \left(e^{\norma{\varepsilon(P,R_j)}} +1 \right) e^{-Ar(R_j)}, 
\end{align*}
where for the last inequality, we use the estimate from  Corollary \ref{cor::adjacent_stretches}.
By Lemma \ref{lem::cycle_estimate} and the fact that the number of connected components with fixed divergence radius is uniformly bounded by some $D \in \mathbb{N}$ (Lemma \ref{lem::divergence_radius}), it follows that there exists $C_2>0$ such that
\begin{align*}
 \sum_{j=1}^{m_1} \left(e^{\norma{\varepsilon(P,R_j)}} +1 \right) e^{-Ar(R_j)} 
  &\leq C_2 \sum_{j=1}^{m_1} \left( e^{C\norm{\varepsilon}(r(R_j)+1)} +1 \right) e^{-Ar(R_j)}  \\
 &\leq C_2D \sum_{r=0}^\infty \left( e^{C\norm{\varepsilon}(r+1)}  +1 \right) e^{-Ar}.
\end{align*}
For $\norm{\varepsilon} < A/C$, this sum converges.
Thus, $\dG(\psi^{\varepsilon}_{\mathcal{C}}, Id)$ is uniformly bounded, the bound depending on $\tilde{k}$ and $\rho$.

To show that the limit $\lim_{\mathcal{C} \to \mathcal{C}_{PQ}} \varphi_{\mathcal{C}}^\varepsilon$ exists, 
choose  a sequence $(\mathcal{C}_m)_{m \in \mathbb{N}}$ of subsets of  $\mathcal{C}_{PQ}$ such that $\mathcal{C}_m$ has  cardinality $m$ and such that $\mathcal{C}_m \subset \mathcal{C}_{m+1}$ for all $m \in \mathbb{N}$.
Fix $m \in \mathbb{N}$ and let $R \subset \tilde{S} \setminus \tilde{\lambda}$ be such that $\mathcal{C}_{m+1} = \mathcal{C}_{m} \cup \{ R\}$.
Further, let  $\mathcal{C}, \mathcal{C'} \subset $ be such that $\mathcal{C}_m = \mathcal{C} \  \dot{\cup} \  \mathcal{C'}$, $\mathcal{C}_{m+1} = \mathcal{C} \  \dot{\cup} \ \{ R\} \ \dot{\cup} \  \mathcal{C'}$ and such that $R$ separates the components in $\mathcal{C}$ from the components in $\mathcal{C'}$.
By the triangle inequality, left-invariance and almost-right invariance of the metric, we have
\begin{align*}
\dG\left(\psi_{\mathcal{C}_m}^{\varepsilon}, \psi_{\mathcal{C}_{m+1}}^{\varepsilon}\right) &\leq \norm{\mathrm{Ad}_{\psi_{\mathcal{C'}^\varepsilon}}}_{\mathrm{op}(\lieg)} \ \dG\left(\Id, T^{\varepsilon(P, R)}_{g_R^0} T^{-\varepsilon(P, R)}_{g_R^1} \right) \\
&\leq C  \left( e^{ \norma{\varepsilon(P,R)}}+1 \right) e^{-Ar(R)}.
\end{align*}
For the last estimate, we use that $\dG\left(\psi_{\mathcal{C}'}^{\varepsilon}, \Id \right)$ is uniformly bounded and Corollary \ref{cor::adjacent_stretches}.
As seen above in the proof of uniform convergence, this goes to $0$ as $r(R)$ goes to infinity. 
It follows that $\left( \psi^{\varepsilon}_{\mathcal{C}_m} \right)_{m \in \mathbb{N}}$ is a Cauchy sequence, so converges.
Thus, also $\left( \varphi^{\varepsilon}_{\mathcal{C}_m} \right)_{m \in \mathbb{N}}$ converges as $m$ goes to infinity.
\end{proof}
Note that, at this stage, the bound on $\norm{\varepsilon}$ depends on the transverse arc $\tilde{k}$ between $P$ and $Q$. 
We will see in Proposition \ref{prop::varphi_PQ_final} that the bound can be made independent on the transverse arc $\tilde{k}$.

The family of shearing maps depends continuously on the twisted cycle $\varepsilon$.
Further, it has the following properties.
\begin{prop}
\label{prop::shearing_composition}
For connected components $P,Q,R \in \tilde{S} \setminus \tilde{\lambda}$, $\varepsilon \in \Htwist$ small enough and $\gamma \in \pi_1(S)$, the shearing maps satisfy
\begin{itemize}
\item  $ \left(\varphi^\varepsilon_{PQ}\right)^{-1} =  \varphi^\varepsilon_{QP}$,
\item $\varphi^\varepsilon_{PQ} = \varphi^\varepsilon_{PR} \ \varphi^\varepsilon_{RQ}$ and
\item $\varphi^\varepsilon_{(\gamma P)( \gamma Q)} = \rho(\gamma) \varphi^\varepsilon_{PQ}  \rho(\gamma)^{-1}$.
\end{itemize}
\end{prop}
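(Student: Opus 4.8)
The plan is to deduce all three properties from the analogous (easy) identities for the finite approximations $\varphi^\varepsilon_{\mathcal{C}}$ together with the corresponding properties of the stretching maps from Lemma \ref{lem::stretch_properties}, and then pass to the limit using Proposition \ref{prop::shearing_map}. The key point throughout is that one may reduce each identity to a statement about finite products of stretching maps, for which everything is a short manipulation, and the limits exist by Proposition \ref{prop::shearing_map} (shrinking the neighborhood of $0$ so that all shearing maps involved are defined simultaneously — here one uses that only finitely many transverse arcs $\tilde{k}$ are needed for a fixed triple $P,Q,R$, or invokes the uniform bound of Proposition \ref{prop::varphi_PQ_final} if one prefers).

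First I would treat the equivariance property $\varphi^\varepsilon_{(\gamma P)(\gamma Q)} = \rho(\gamma)\varphi^\varepsilon_{PQ}\rho(\gamma)^{-1}$. For a finite set $\mathcal{C} = \{R_1,\dots,R_m\} \subset \mathcal{C}_{PQ}$, the set $\gamma\mathcal{C} = \{\gamma R_1,\dots,\gamma R_m\}$ lies in $\mathcal{C}_{(\gamma P)(\gamma Q)}$, and the geodesics satisfy $g_{\gamma R_i}^\pm = \gamma g_{R_i}^\pm$. Since $\varepsilon$ is a cycle pulled back from $S$, applying $\gamma$ to an arc does not change the value of $\varepsilon$, so $\varepsilon(\gamma P, \gamma R_i) = \varepsilon(P,R_i)$ and $\varepsilon(\gamma P, \gamma Q) = \varepsilon(P,Q)$. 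Property \eqref{item::equivariance} of Lemma \ref{lem::stretch_properties} gives $T^H_{\gamma g} = \rho(\gamma) T^H_g \rho(\gamma)^{-1}$, so term by term $\varphi^\varepsilon_{\gamma \mathcal{C}} = \rho(\gamma)\varphi^\varepsilon_{\mathcal{C}}\rho(\gamma)^{-1}$; as $\mathcal{C} \to \mathcal{C}_{PQ}$ we also have $\gamma\mathcal{C} \to \mathcal{C}_{(\gamma P)(\gamma Q)}$, and taking limits (conjugation by the fixed element $\rho(\gamma)$ is continuous) yields the claim.

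Next, the inverse relation $(\varphi^\varepsilon_{PQ})^{-1} = \varphi^\varepsilon_{QP}$: reversing a transverse arc $\tilde{k}$ reverses the orientations of all the geodesics $g_R^0, g_R^1$ and interchanges their roles, and by the twist condition $\varepsilon(Q,\cdot)$ relates to $\varepsilon(P,\cdot)$ via $\iota$; combined with property \eqref{item::orientation} of Lemma \ref{lem::stretch_properties}, $T^H_{\overline g} = T^{-\iota(H)}_g$, each factor $T^{\varepsilon(P,R)}_{g_R^0} T^{-\varepsilon(P,R)}_{g_R^1}$ in $\varphi^\varepsilon_{\mathcal{C}}$ gets replaced in $\varphi^\varepsilon_{\overline{\mathcal{C}}}$ (reading $\mathcal{C}$ in reverse order, which is the correct labeling from $Q$ to $P$) by its inverse; since reversing the order of a product and inverting each factor inverts the whole product, $\varphi^\varepsilon_{QP} = (\varphi^\varepsilon_{PQ})^{-1}$ at the finite level, and we pass to the limit. (Here one must be a little careful that the "leftover'' factor $T^{\varepsilon(P,Q)}_{g_Q^-}$ combines correctly with the $\psi$-part; this is exactly the sort of bookkeeping done in the proof of Proposition \ref{prop::shearing_map}, and I expect it to go through after matching the factor associated to the last component to the endpoint of the reversed arc.) Finally, the cocycle identity $\varphi^\varepsilon_{PQ} = \varphi^\varepsilon_{PR}\varphi^\varepsilon_{RQ}$ for $R$ separating $P$ from $Q$ follows because then $\mathcal{C}_{PQ} = \mathcal{C}_{PR} \sqcup \{R\} \sqcup \mathcal{C}_{RQ}$ and one can choose approximating families respecting this splitting; the product $\varphi^\varepsilon_{\mathcal{C}}$ factors as a $\mathcal{C}_{PR}$-part, the middle factor for $R$, and a $\mathcal{C}_{RQ}$-part, and one checks that the middle factor together with the endpoint factors reassembles into $\varphi^\varepsilon_{\mathcal{C} \cap \mathcal{C}_{PR}}$ times $\varphi^\varepsilon_{\mathcal{C} \cap \mathcal{C}_{RQ}}$, using additivity of $\varepsilon$ to rewrite $\varepsilon(P,\cdot)$ in terms of $\varepsilon(R,\cdot)$. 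For general $R$ not between $P$ and $Q$ one reduces to the separating case by applying the identity twice. Passing to the limit gives the result. The main obstacle I anticipate is precisely this last bookkeeping: the definition of $\varphi^\varepsilon_{\mathcal{C}}$ is not symmetric in $P$ and $Q$ (it carries a distinguished trailing stretching factor $T^{\varepsilon(P,Q)}_{g_Q^-}$ at the $Q$-end), so verifying that the factorizations line up requires carefully tracking how this trailing factor is absorbed or produced when one splits or reverses the arc, and using the twist condition and additivity at exactly the right places.
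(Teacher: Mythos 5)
Your treatment of $\rho$-equivariance is correct and matches the paper: it holds exactly for every finite $\mathcal{C}$ by Lemma~\ref{lem::stretch_properties}\eqref{item::equivariance}, and passes to the limit. However, for the inverse and composition properties there is a genuine gap, precisely at the ``bookkeeping'' you flag as a possible obstacle.

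The issue is that the finite partial products $\varphi^\varepsilon_{\mathcal{C}}$, for a \emph{proper} finite subset $\mathcal{C}=\{R_1,\dots,R_m\}\subsetneq\mathcal{C}_{PQ}$, do \emph{not} satisfy the composition or inverse identities. If you write out the telescoping manipulation you describe (using additivity to replace $\varepsilon(P,\cdot)$ by $\varepsilon(R,\cdot)$, or the twist condition to replace $\varepsilon(Q,\cdot)$ by $\iota(\varepsilon(\cdot,Q))$), the cancellation you need is of the form $T^{-H}_{g^1_{R_i}}T^{H}_{g^0_{R_{i+1}}}=\Id$ for consecutive $R_i,R_{i+1}\in\mathcal{C}$, which requires $g^1_{R_i}=g^0_{R_{i+1}}$, i.e.\ that consecutive elements of $\mathcal{C}\cup\{P,Q\}$ are \emph{adjacent} plaques sharing a boundary leaf. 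This is false whenever $\mathcal{C}\subsetneq\mathcal{C}_{PQ}$: between $R_i$ and $R_{i+1}$ there typically remain infinitely many unenumerated components, so $g^1_{R_i}$ and $g^0_{R_{i+1}}$ are genuinely distinct leaves and the stray factor $T^{-\varepsilon(P,R)}_{g^1_{R_i}}T^{\varepsilon(P,R)}_{g^0_{R_{i+1}}}$ does not vanish. Thus neither $\varphi^\varepsilon_{\mathcal{C}}=\varphi^\varepsilon_{\mathcal{C}\cap\mathcal{C}_{PR}}\,\varphi^\varepsilon_{\mathcal{C}\cap\mathcal{C}_{RQ}}$ nor $\varphi^\varepsilon_{\overline{\mathcal{C}}}=(\varphi^\varepsilon_{\mathcal{C}})^{-1}$ holds on the nose at the finite level, and you cannot simply take limits of an identity that is not yet true.

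The paper's resolution is an extra approximation step that your proposal omits: replace the relevant piece of $\tilde{\lambda}$ by a genuine \emph{finite} sublamination (in the sense of \cite[Lemma~5.8]{Bonahon_Acta}), whose plaques really are mutually adjacent, and use it to define an auxiliary finite product $\widehat{\varphi}^\varepsilon_{\mathcal{C}}$. For $\widehat{\varphi}^\varepsilon_{\mathcal{C}}$ the telescoping cancellations are exact, so the inverse and composition identities hold exactly for the auxiliary products; then one shows, via the estimate $\dG(\widehat{\psi}^\varepsilon_{\mathcal{C}},\psi^\varepsilon_{\mathcal{C}})\le C\sum_{R\in\mathcal{C}\setminus\mathcal{C}_{gh}}e^{-A r(R)}\to 0$, that $\widehat{\varphi}^\varepsilon_{\mathcal{C}}$ and $\varphi^\varepsilon_{\mathcal{C}}$ have the same limit, so $\varphi^\varepsilon_{PQ}$ inherits the identities. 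Without this intermediate finite-lamination object, your limiting argument does not close; you would at minimum need to show that the error terms coming from the mismatched boundary leaves tend to $0$, which is essentially the content of the estimate above and should be made explicit rather than asserted.
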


\begin{figure}
\centering
\includegraphics[width=0.7\textwidth]{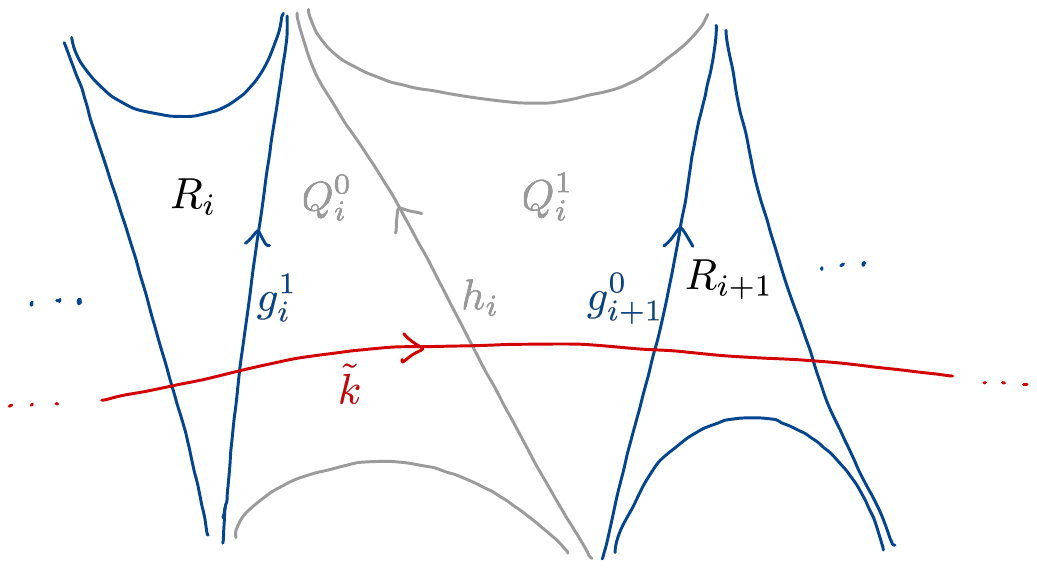}
\caption{In between two triangles $R_i$ and $R_{i+1}$, insert an auxiliary geodesic $h_i$ and two triangles $Q_i^0$ and $Q_i^1$ to approximate the part of the lamination $\tilde{\lambda}$ between $R_i$ and $R_{i+1}$ by a finite lamination.}
\label{fig::Notation_Q_i}
\end{figure}
\begin{proof}
The $\rho$-equivariance follows from the $\rho$-equivariance of the stretching maps (Lemma \ref{lem::stretch_properties}).
For the other properties, assume first that $P$ and $Q$ are separated by finitely many leaves of $\tilde{\lambda}$. 
In this case, $\mathcal{C}_{PQ} = \{R_1, \dots, R_m \}$ and every $R_i$ is adjacent to $R_{i+1}$. 
Then the behavior under taking the inverse follows from the behavior of $T_g^H$ under taking inverses and the twist condition of the cycle.
The composition property follows from the definition of $\varphi^\varepsilon_{PQ}$ and the additivity of the cycle. 
For the general case and a finite subset $\mathcal{C} \subset \mathcal{C}_{PQ}$, we approximate the part of the lamination $\tilde{\lambda}$ between the components in $\mathcal{C}$ by a finite lamination as in \cite[Lemma 5.8]{Bonahon_Acta}:
If $\mathcal{C} = \{R_1, \dots, R_m\}$, for every $i = 1, \dots, m$, we introduce an auxiliary geodesic $h_i$ and two triangles $Q_i^0$ and $Q_i^1$ that separate $R_i$ and $R_{i+1}$ as shown in Figure \ref{fig::Notation_Q_i}.
This gives a finite sequence of components $Q_0^0, Q_0^1, R_1, Q_1^0, Q_1^1, R_2, \dots, Q_m^0, Q_m^1, R_m$ separating $P$ from $Q$, where each component is adjacent to the next one.
Note that the geodesics $h_i$ are not contained in the lamination $\tilde{\lambda}$, and the $Q_i^{0/1}$ are not elements of $\mathcal{C}_{PQ}$.
We define a finite concatenation of maps of the form $\left( T_{g_R^-}^{\varepsilon(P, R)}  T_{g_R^+}^{-\varepsilon(P, R)}  \right)$, with $R$ being equal to either $R_i$ or $Q_i^{0/1}$ for some $i$.
We denote this composition by $\widehat{\varphi}_{\mathcal{C}}^\varepsilon$. 
For $\widehat{\varphi}_{\mathcal{C}}^\varepsilon$, we can show the proposition as explained above. 
Let $\widehat{\psi}_\mathcal{C}^\varepsilon := \widehat{\varphi}_{\mathcal{C}}^\varepsilon T^{-\varepsilon(P,Q)}_{g_Q^0}$.
Using the same techniques as in the proof of Proposition \ref{prop::shearing_map}, namely triangle inequality, almost-invariance of the distance, Proposition \ref{prop::Close_stretches} and the fact that $\psi_{\mathcal{C}}^\varepsilon$ is uniformly bounded, we obtain the estimate
\begin{align*}
\dG\left( \widehat{\psi}_\mathcal{C}^\varepsilon, \psi_{\mathcal{C}}^\varepsilon \right) \leq C \sum_{R \in \mathcal{C}_{PQ} \setminus \mathcal{C}} e^{-Ar(R)}
\end{align*} 
for some constants $C$ and $A$.
The right-hand side converges to $0$ as $\mathcal{C}$ converges to $\mathcal{C}_{PQ}$.
In total, this shows that $\widehat{\psi}_\mathcal{C}^\varepsilon$ and $\psi_{\mathcal{C}}^\varepsilon$ have the same limit for $\mathcal{C}$ to $\mathcal{C}_{PQ}$.
This implies that also $\widehat{\varphi}_{\mathcal{C}}^\varepsilon$ and $\varphi_{\mathcal{C}}^\varepsilon$ have the same limit $\varphi_{PQ}^\varepsilon$. 
Thus, $\varphi_{PQ}^\varepsilon$ inherits the behavior under inversion and taking inverses of $\widehat{\varphi}_{\mathcal{C}}^\varepsilon$.
For a more detailed version of the proof, see \cite[Proposition 5.6]{Pfeil_Cataclysms}.
\end{proof}

Up to now, the bound on the cycle $\varepsilon \in \Htwist$ that guarantees the convergence depends on the transverse arc $\tilde{k}$. 
We now show that there exists a constant depending on the representation $\rho$ only.
\begin{prop}
\label{prop::varphi_PQ_final}
There exists a constant $B >0$ depending on the representation $\rho$ only such that for all connected components $P,Q, R \subset \tilde{S} \setminus \tilde{\lambda}$, for all $\varepsilon \in \Htwist$ with $\norm{\varepsilon} _{\mathcal{H}^{\mathrm{Twist}}(\widehat{\lambda};\mathfrak{a}_\theta)} < B$, the limit $\varphi^\varepsilon_{PQ} = \lim_{\mathcal{C} \to \mathcal{C}_{PQ}} \varphi^\varepsilon_{PQ}$ exists and satisfies the properties from Proposition \ref{prop::shearing_composition}.
\end{prop}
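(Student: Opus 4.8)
The idea is to revisit the proof of Proposition~\ref{prop::shearing_map} and isolate which of the constants occurring there actually govern the threshold on $\norm{\varepsilon}$ below which the defining product converges. In that proof the threshold had the shape $B_{\tilde k} = A_{\tilde k}/C_{\tilde k}$, where $A_{\tilde k}$ is the exponential decay rate furnished by Corollary~\ref{cor::adjacent_stretches} and $C_{\tilde k}$ is the slope of $r(R)$ in the estimate of Lemma~\ref{lem::cycle_estimate}; every other appearance of $\tilde k$ entered only through a multiplicative constant standing in front of an already convergent series, or through an additive constant accounting for the finitely many $R \in \mathcal{C}_{PQ}$ with $g_R^0$ and $g_R^1$ not sharing an endpoint. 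Such multiplicative and additive constants change the bound on $\dG(\psi^\varepsilon_{\mathcal{C}}, \Id)$ but are irrelevant to convergence. Hence it is enough to bound $A_{\tilde k}$ from below and $C_{\tilde k}$ from above \emph{uniformly} over all tightly transverse arcs $\tilde k$.

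To control the decay rate I would trace Corollary~\ref{cor::adjacent_stretches}: there one uses $\mathrm{d}(g_R^0, g_R^1) \le B\,\ell(\tilde k \cap R)$ with $B$ depending on $\tilde k$, then $\ell(\tilde k \cap R) \le C_2 e^{-A_2 r(R)}$ from Lemma~\ref{lem::divergence_radius}, and finally Proposition~\ref{prop::Close_stretches} contributes a factor $\mathrm{d}(g_R^0, g_R^1)^{A}$ in which the exponent $A$ is the local H\"older exponent of the slithering maps. The latter coincides, by the construction in Proposition~\ref{prop::slithering}, with the H\"older exponent of the boundary map $\zeta$, which is a global invariant of $\rho$; and the exponent $A_2$ in Lemma~\ref{lem::divergence_radius} (as opposed to the multiplicative constants $C_1, C_2$) is intrinsic to the pair $(\lambda, m)$. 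Thus the composite decay rate $A_{\tilde k}$ may be replaced by a constant $A_0 > 0$ depending only on $\lambda$, $m$ and $\rho$. A parallel bookkeeping in the proof of Lemma~\ref{lem::cycle_estimate} (following \cite[Lemma 6]{Bonahon_Shearing}) shows that the number of components of $\tilde S \setminus \tilde\lambda$ lying between $P$ and $R$ grows at most linearly in $r(R)$ with a slope $c_0$ fixed by the combinatorics of $\lambda$, the rest being an additive term depending on $\tilde k$; since each such component contributes at most $\norm{\varepsilon}$ to $\varepsilon(P,R)$, one gets $\norm{\varepsilon(P,R)}_{\atheta} \le \norm{\varepsilon}\,(c_0\, r(R) + c_1(\tilde k))$, so $C_{\tilde k}$ may be replaced by $c_0$ at the expense of that additive term.

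With uniform exponents available, I would re-run the estimates of Proposition~\ref{prop::shearing_map} line by line: the series controlling $\dG(\psi^\varepsilon_{\mathcal{C}}, \Id)$ becomes, up to a fixed multiplicative constant, $\sum_{r \ge 0} \bigl( e^{\,c_0 \norm{\varepsilon}(r+1)} + 1 \bigr) e^{-A_0 r}$, which converges as soon as $\norm{\varepsilon} < A_0/c_0 =: B$, a bound depending only on $\rho$ (and on $\lambda$, $m$), not on $P$, $Q$ or the auxiliary arc; the Cauchy estimate for $(\psi^\varepsilon_{\mathcal{C}_m})_m$ is identical. Since the limit $\varphi^\varepsilon_{PQ}$ is independent of the transverse arc joining $P$ to $Q$, and since the proof of Proposition~\ref{prop::shearing_composition} used only equivariance of the stretching maps together with the finite-sublamination approximation --- neither of which depends on the value of the threshold --- the inversion, composition and $\rho$-equivariance identities persist for all $\varepsilon$ with $\norm{\varepsilon} < B$.

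The crux is the second step: one cannot use Corollary~\ref{cor::adjacent_stretches} and Lemma~\ref{lem::cycle_estimate} as black boxes, since as stated their constants may depend on $\tilde k$; the real work is to re-enter those proofs (and, beneath them, Proposition~\ref{prop::slithering} and \cite[Lemma 5.3]{Bonahon_Acta}, \cite[Lemma 6]{Bonahon_Shearing}) and confirm the dichotomy that exponents and linear slopes are intrinsic while the remaining constants are harmless. An alternative that sidesteps this bookkeeping is to fix once and for all a $\pi_1(S)$-equivariant assignment $(P,Q) \mapsto \tilde k_{PQ}$ of a canonical tightly transverse arc (e.g.\ a geodesic arc between suitable basepoints); cocompactness of the $\pi_1(S)$-action then makes the constants in Corollary~\ref{cor::adjacent_stretches} and Lemma~\ref{lem::cycle_estimate} for the family $\{\tilde k_{PQ}\}$ uniform automatically, and one finishes as above.
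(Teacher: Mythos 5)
Your main argument and the paper's proof take genuinely different routes. The paper does not re-open any of the constants from Proposition~\ref{prop::shearing_map}: instead, it chooses finitely many arcs $k_1,\dots,k_m$ on the \emph{compact} surface $S$ so that every pair of connected components of $S\setminus\lambda$ is joined by some $k_j$, obtains a threshold $B_j$ for each from Proposition~\ref{prop::shearing_map} (which is independent of the chosen lift because all estimates are $\rho$-equivariant), sets $B := \min_j B_j$, and then handles an arbitrary pair $(P,Q)$ by exhibiting a chain $P = R_0, R_1,\dots, R_{N+1}=Q$ in which consecutive components are joined by a lift of one of the finitely many $k_{i_j}$, finally writing $\varphi^\varepsilon_{PQ} = \varphi^\varepsilon_{PR_1}\cdots\varphi^\varepsilon_{R_N Q}$ via the composition property. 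Uniformity in $(P,Q)$ is therefore obtained entirely combinatorially, with no need to examine which constants in Lemma~\ref{lem::divergence_radius}, Corollary~\ref{cor::adjacent_stretches} or Lemma~\ref{lem::cycle_estimate} are intrinsic versus arc-dependent.

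Your primary approach --- showing directly that the exponent $A$ and the slope $C$ are intrinsic to $(\rho,\lambda,m)$ --- is plausible and, if carried out, would give a somewhat stronger statement (a uniform threshold that works for a single arbitrary transverse arc, rather than via decomposition into finitely many building blocks). But it requires reopening the proofs of results the paper only cites, including \cite[Lemma 5.3]{Bonahon_Acta} and \cite[Lemma 6]{Bonahon_Shearing}, and the H\"older exponent in Proposition~\ref{prop::slithering}; the paper's statements only assert ``constants depending on $\tilde k$ and $\rho$'' and never separate exponents from prefactors, so you cannot extract the uniformity from the paper's lemmas as stated. The paper's route deliberately avoids this bookkeeping.

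Your ``alternative'' at the end is close to the paper's proof, but as written it contains a gap: assigning a canonical geodesic arc $\tilde k_{PQ}$ to \emph{every} pair $(P,Q)$ and invoking cocompactness does \emph{not} make the constants uniform over the family, because $\pi_1(S)$ does not act cocompactly on pairs of components of $\tilde S\setminus\tilde\lambda$ --- the projections of the $\tilde k_{PQ}$ to $S$ can be arbitrarily long. What rescues the argument is precisely the paper's observation that the composition property $\varphi^\varepsilon_{PQ}=\varphi^\varepsilon_{PR}\varphi^\varepsilon_{RQ}$ (Proposition~\ref{prop::shearing_composition}) reduces everything to lifts of a \emph{finite} collection of arcs on $S$; you hint at composition earlier but do not bring it to bear at the point where your alternative needs it.
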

\begin{proof}
The only thing left to show is that the constant $B$ from Proposition \ref{prop::shearing_map} can be made independent of the arc $\tilde{k}$. 
Choose a collection $k_1, \dots, k_m$ of arcs on the surface $S$ transverse to $\lambda$ such that for every two connected components in $S \setminus \lambda$, there is an arc $k_j$ connecting them.
For every $j$, let $B_j >0$ be the constant as in Proposition \ref{prop::shearing_map} for a lift $\tilde{k}_{j}$ of $k_j$. 
This is independent of the choice of lift. 
Let $B := \min\{B_1, \dots, B_m\}$. 
Let $P, Q \subset \tilde{S} \setminus \tilde{\lambda}$ be connected components. 
Then there exists a finite sequence of components $P = R_0, R_1, \dots, R_N, R_{N+1} = Q$ such that $R_j$ separates $R_{j-1}$ from $R_{j+1}$ and such that  $R_j$ and  $R_{j+1}$ are connected by the same lift $\widetilde{k}_{i_j}$ of a transverse arc $k_{i_j}$. 
For $\norm{\varepsilon}_{\Htwist} <B$, the maps $\varphi_{R_j R_{j+1}}$ exist and thus also $\varphi_{PQ}  = \varphi_{PR_1} \cdots\varphi_{R_N Q}$.
\end{proof}

We conclude this subsection with an estimate that will be useful later.
\begin{lemma}
\label{lem::psi_P_estimate}
Let $r \in \mathbb{N}$ and let $P,Q \subset \tilde{S} \setminus \tilde{\lambda}$ be such that all $R \in \mathcal{C}_{PQ}$  satisfy  $r(R) >r$.
Then, for $\varepsilon \in \Htwist$ small enough, there exist constants $C', A' >0$ such that $\dG(\psi^\varepsilon_{PQ}, Id) \leq Ce^{-A'r}$.
\end{lemma}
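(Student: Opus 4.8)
The plan is to follow the template already established in the proof of Proposition \ref{prop::shearing_map}, but now exploiting the extra hypothesis that \emph{all} components separating $P$ and $Q$ have divergence radius strictly greater than $r$. Recall that $\psi^\varepsilon_{PQ} = \varphi^\varepsilon_{PQ} (T^{-\varepsilon(P,Q)}_{g_Q^0})$ is obtained as the limit of the finite products $\psi^\varepsilon_{\mathcal{C}} = \prod_{j} \bigl( T^{\varepsilon(P,R_j)}_{g_j^0} T^{-\varepsilon(P,R_j)}_{g_j^1}\bigr)$ over $\mathcal{C} \to \mathcal{C}_{PQ}$. First I would, as in Proposition \ref{prop::shearing_map}, reduce to the case that every $R \in \mathcal{C}_{PQ}$ has $g_R^0$ and $g_R^1$ sharing an endpoint; here, since the finitely many exceptional components still have divergence radius $> r$, absorbing them only changes the multiplicative constant and does not spoil the $e^{-A'r}$ decay. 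Then, writing the telescoping estimate
\begin{align*}
\dG(\psi^\varepsilon_{\mathcal{C}}, \Id) \leq \sum_{j} \norm{\mathrm{Ad}_{\psi^\varepsilon_{\mathcal{C}_j'}}}_{\mathrm{op}(\lieg)} \, \dG\bigl(\Id, T^{\varepsilon(P,R_j)}_{g_j^0} T^{-\varepsilon(P,R_j)}_{g_j^1}\bigr),
\end{align*}
one bounds the $\mathrm{Ad}$-factor uniformly (the partial $\psi$'s are uniformly bounded, exactly as in Proposition \ref{prop::shearing_map}) and applies Corollary \ref{cor::adjacent_stretches} to each term to get $\dG(\Id, T^{\varepsilon(P,R_j)}_{g_j^0} T^{-\varepsilon(P,R_j)}_{g_j^1}) \leq C(e^{\norma{\varepsilon(P,R_j)}}+1) e^{-Ar(R_j)}$.

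The next step is to insert the cycle estimate Lemma \ref{lem::cycle_estimate}, $\norma{\varepsilon(P,R_j)} \leq C\norm{\varepsilon}_{\mathcal{H}(\olambda;\atheta)}(r(R_j)+1)$, and the bounded-multiplicity statement in Lemma \ref{lem::divergence_radius}(2). Grouping the sum by the value $N = r(R_j)$ and using that only $N > r$ occurs, we arrive at a bound of the form
\begin{align*}
\dG(\psi^\varepsilon_{\mathcal{C}}, \Id) \leq C'' \sum_{N = r+1}^{\infty} \bigl( e^{C\norm{\varepsilon}(N+1)} + 1 \bigr) e^{-AN}.
\end{align*}
For $\norm{\varepsilon} < A/(2C)$, say (shrinking the earlier neighbourhood if necessary), the summand is dominated by a constant times $e^{-AN/2}$, and the geometric tail starting at $N = r+1$ is bounded by a constant times $e^{-A'r}$ with $A' = A/2$. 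Passing to the limit $\mathcal{C} \to \mathcal{C}_{PQ}$ (the convergence being already guaranteed by Proposition \ref{prop::varphi_PQ_final}, and $\dG(\cdot,\Id)$ being continuous) yields $\dG(\psi^\varepsilon_{PQ}, \Id) \leq C' e^{-A'r}$, which is the claim.

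I do not expect a genuine obstacle here: every ingredient — the uniform boundedness of the partial products, Corollary \ref{cor::adjacent_stretches}, Lemma \ref{lem::cycle_estimate}, the bounded multiplicity of divergence radii — is already in place, and the only real point is the bookkeeping that converts "all divergence radii exceed $r$" into a geometric tail. The mildly delicate point, and the one I would state carefully, is that the constants $C', A'$ must be uniform over all pairs $P,Q$ satisfying the hypothesis; this follows because the constants furnished by Corollary \ref{cor::adjacent_stretches}, Lemma \ref{lem::cycle_estimate} and Lemma \ref{lem::divergence_radius} depend only on a transverse arc $\tilde{k}$ and on $\rho$, and — just as in Proposition \ref{prop::varphi_PQ_final} — one can take a finite collection of arcs on $S$ realizing all pairs of complementary components and let the constants be the worst case over this finite collection. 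The second minor point worth a sentence is the reduction step handling the finitely many components where $g_R^0, g_R^1$ do not share an endpoint: since those also have $r(R) > r$, their contribution is again $O(e^{-A'r})$ rather than merely $O(1)$, so the estimate is genuinely decaying in $r$ and not just bounded.
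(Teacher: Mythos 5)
Your proposal is correct and follows essentially the same route as the paper: start from the bound $\dG(\psi^\varepsilon_{PQ},\Id) \leq C_1\sum_{R}(e^{C\norm{\varepsilon}(r(R)+1)}+1)e^{-Ar(R)}$ established in Proposition \ref{prop::shearing_map}, group by the divergence radius using Lemma \ref{lem::divergence_radius}(2), and observe that the hypothesis $r(R)>r$ turns the sum into the tail of a geometric series, bounded by a constant times $e^{-A'r}$ once $\norm{\varepsilon}$ is small enough. Your extra remarks on handling the finitely many non-wedge components and on uniformity of the constants over pairs $(P,Q)$ via a finite collection of arcs are sound refinements that the paper's one-paragraph proof leaves implicit, but they do not constitute a different argument.
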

\begin{proof}
As in the proof of Proposition \ref{prop::shearing_map}, we have
\begin{align*}
\dG(\psi^\varepsilon_{PQ}, \Id) &\leq  C_1 \sum_{R \in \mathcal{C}_{PQ}} \left( e^{C\norm{\varepsilon}(r(R)+1)} +1 \right) e^{-Ar(R)}.
\end{align*}
Using that all components in $\mathcal{C}_{PQ}$ have divergence radius at least $r+1$ and that the number of components with fixed divergence radius is bounded by some $D \in \mathbb{N}$ (Lemma \ref{lem::divergence_radius}), this gives us
\begin{align*}
\dG(\psi_{PQ}, \Id)
 &\leq C_1 D \sum_{l=r+1}^\infty \left( e^{C\norm{\varepsilon}(l+1)} \right) e^{-Al}  + C_1 \sum_{l=r+1}^\infty e^{-Al}.
\end{align*}
Both sums are the remainder term of the geometric series and for $\norm{\varepsilon} < A/C$, they are bounded by a constant times $e^{-A'r}$, where $A' := -(C \norm{\varepsilon} - A)$.
This finishes the proof.
\end{proof}

\subsection{Cataclysms}

From Proposition \ref{prop::shearing_map}, it follows that there exists a neighborhood $\mathcal{V}_\rho \subset \Htwist$ around $0$ depending on $\rho$ and a map
\begin{align*}
\mathcal{V}_\rho & \to G^{\{ (P,Q) \ | \ P,Q \subset \tilde{S} \setminus \tilde{\lambda} \} } \\
\varepsilon & \mapsto \{ \varphi^\varepsilon_{PQ} \}_{(P,Q)}
\end{align*}
that assigns to $\varepsilon$ the family of shearing maps.

Using the shearing maps, we can define the cataclysm deformation based at $\rho$ as follows:
Fix a reference component $P \subset \tilde{S} \setminus \tilde{\lambda}$ and a twisted cycle $\varepsilon \in \mathcal{V}_\rho$.
For $\gamma \in \pi_1(S)$, set
\begin{align*}
\Lambda_P^\varepsilon \rho (\gamma) := \varphi_{P (\gamma P)}^\varepsilon \rho(\gamma).
\end{align*}
With this definition, we have the following result.
\begin{theorem}
\label{thm::cataclysm_deformation_hom}
Let $\rho \colon \pi_1(S) \to G$ be a $\theta$-Anosov representation, $\lambda$ a geodesic lamination on $S$, not necessarily maximal, and $\tilde{\lambda}$ its lift to the universal cover $\tilde{S}$.
There exists a neighborhood $\mathcal{V}_\rho$ of $0$ in $\Htwist$ such that for any reference component $P \subset \tilde{S} \setminus \tilde{\lambda}$, there is a continuous map 
\begin{align*}
\Lambda_P \colon \mathcal{V}_\rho &\to \mathrm{Hom}(\pi_1(S), G) \\
\varepsilon &\mapsto \Lambda_P^\varepsilon \rho
\end{align*}
such that $\Lambda_P^0 \rho = \rho$.
This map is called \emph{cataclysm} based at $\rho$ along $\rho$.
Further, there exists a neighborhood $\mathcal{U}_\rho \subset \mathcal{V}_\rho$ such that for all $\varepsilon \in \mathcal{U}_\rho$, $\Lambda^\varepsilon_0 \rho$ is $\theta$-Anosov.
For different reference components $P, Q$, the resulting deformations $\Lambda_P^\varepsilon \rho$ and $\Lambda_Q^\varepsilon \rho$ differ by conjugation. 
\end{theorem}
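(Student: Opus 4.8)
The plan is to verify the three assertions of Theorem \ref{thm::cataclysm_deformation_hom} in turn: that $\Lambda_P^\varepsilon\rho$ is a homomorphism, that it depends continuously on $\varepsilon$ with $\Lambda_P^0\rho=\rho$, that it is $\theta$-Anosov for $\varepsilon$ in a possibly smaller neighborhood, and finally that changing the reference component changes the representation by conjugation. The homomorphism property is the computational heart. To check $\Lambda_P^\varepsilon\rho(\gamma_1\gamma_2)=\Lambda_P^\varepsilon\rho(\gamma_1)\Lambda_P^\varepsilon\rho(\gamma_2)$ one expands both sides using the definition $\Lambda_P^\varepsilon\rho(\gamma):=\varphi^\varepsilon_{P(\gamma P)}\rho(\gamma)$. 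The left side is $\varphi^\varepsilon_{P(\gamma_1\gamma_2 P)}\rho(\gamma_1\gamma_2)$, while the right side is $\varphi^\varepsilon_{P(\gamma_1 P)}\rho(\gamma_1)\varphi^\varepsilon_{P(\gamma_2 P)}\rho(\gamma_2)$. Using the $\rho$-equivariance from Proposition \ref{prop::shearing_composition}, namely $\rho(\gamma_1)\varphi^\varepsilon_{P(\gamma_2 P)}\rho(\gamma_1)^{-1}=\varphi^\varepsilon_{(\gamma_1 P)(\gamma_1\gamma_2 P)}$, the right side becomes $\varphi^\varepsilon_{P(\gamma_1 P)}\varphi^\varepsilon_{(\gamma_1 P)(\gamma_1\gamma_2 P)}\rho(\gamma_1\gamma_2)$, and then the composition (cocycle) property $\varphi^\varepsilon_{PQ}=\varphi^\varepsilon_{PR}\varphi^\varepsilon_{RQ}$ collapses this to $\varphi^\varepsilon_{P(\gamma_1\gamma_2 P)}\rho(\gamma_1\gamma_2)$, as desired. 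One must make sure the $\varepsilon$-smallness condition is uniform: this is exactly what Proposition \ref{prop::varphi_PQ_final} provides, a bound $B$ depending only on $\rho$ under which all the $\varphi^\varepsilon_{PQ}$ exist and satisfy the cocycle and equivariance relations simultaneously. So we take $\mathcal{V}_\rho=\{\varepsilon\in\Htwist:\norm{\varepsilon}_{\mathcal{H}^{\mathrm{Twist}}(\widehat{\lambda};\mathfrak{a}_\theta)}<B\}$.

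For continuity of $\Lambda_P$ and the identity $\Lambda_P^0\rho=\rho$: continuity follows from the already-noted continuous dependence of the family $\{\varphi^\varepsilon_{PQ}\}$ on $\varepsilon$ (stated after Proposition \ref{prop::shearing_map}), since for each fixed $\gamma$ the map $\varepsilon\mapsto\varphi^\varepsilon_{P(\gamma P)}\rho(\gamma)$ is then continuous, and $\mathrm{Hom}(\pi_1(S),G)$ carries the topology of pointwise convergence on a finite generating set. For $\varepsilon=0$ every stretching map $T^0_g=m_g\exp(0)m_g^{-1}=\mathrm{Id}$, hence every $\varphi^0_{\mathcal{C}}=\mathrm{Id}$ and so $\varphi^0_{PQ}=\mathrm{Id}$, giving $\Lambda_P^0\rho(\gamma)=\rho(\gamma)$.

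The $\theta$-Anosov conclusion is the step I expect to be the genuine obstacle, and the cleanest route is to invoke openness of the $\theta$-Anosov condition in $\mathrm{Hom}(\pi_1(S),G)$ (\cite[Theorem 5.13]{GuichardWienhard_Anosov}, cited in the text): since $\Lambda_P$ is continuous and $\Lambda_P^0\rho=\rho$ is $\theta$-Anosov, the preimage of the open set of $\theta$-Anosov representations is an open neighborhood $\mathcal{U}_\rho\subset\mathcal{V}_\rho$ of $0$, and every $\varepsilon\in\mathcal{U}_\rho$ yields a $\theta$-Anosov deformation. (This is where I would expect the paper instead to work harder and exhibit the boundary map directly, as in Theorem \ref{thm::boundary_intro}, proving transversality and dynamics-preservation of $\zeta'(x)=\varphi^\varepsilon_{PQ_x}\zeta(x)$ by hand; if one wants a self-contained argument that does not shrink the neighborhood via an abstract openness statement, that construction is the substantive work, and controlling transversality of the deformed flags uniformly in $\varepsilon$ is the delicate part.)

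Finally, for the change of reference component: fix $P,Q\subset\tilde{S}\setminus\tilde\lambda$ and set $h:=\varphi^\varepsilon_{QP}\in G$. For any $\gamma\in\pi_1(S)$, compute
\begin{align*}
h\,\Lambda_P^\varepsilon\rho(\gamma)\,h^{-1}
&=\varphi^\varepsilon_{QP}\,\varphi^\varepsilon_{P(\gamma P)}\,\rho(\gamma)\,\varphi^\varepsilon_{PQ}\\
&=\varphi^\varepsilon_{QP}\,\varphi^\varepsilon_{P(\gamma P)}\,\varphi^\varepsilon_{(\gamma P)(\gamma Q)}\,\rho(\gamma)\\
&=\varphi^\varepsilon_{Q(\gamma Q)}\,\rho(\gamma)=\Lambda_Q^\varepsilon\rho(\gamma),
\end{align*}
where the second equality uses $\rho(\gamma)\varphi^\varepsilon_{PQ}=\varphi^\varepsilon_{(\gamma P)(\gamma Q)}\rho(\gamma)$ (the $\rho$-equivariance of Proposition \ref{prop::shearing_composition}) and the third uses the cocycle property $\varphi^\varepsilon_{QP}\varphi^\varepsilon_{P(\gamma P)}\varphi^\varepsilon_{(\gamma P)(\gamma Q)}=\varphi^\varepsilon_{Q(\gamma Q)}$. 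Hence $\Lambda_Q^\varepsilon\rho$ is the conjugate of $\Lambda_P^\varepsilon\rho$ by $\varphi^\varepsilon_{QP}$, completing the proof.
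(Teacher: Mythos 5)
Your proof is correct and follows essentially the same route as the paper's: homomorphism via the cocycle and equivariance identities of Proposition \ref{prop::shearing_composition}, uniform smallness from Proposition \ref{prop::varphi_PQ_final}, continuity from continuous dependence of the shearing maps on $\varepsilon$, $\theta$-Anosov from openness via \cite[Theorem 5.13]{GuichardWienhard_Anosov}, and conjugacy by $\varphi^\varepsilon_{PQ}$ (equivalently $\varphi^\varepsilon_{QP}$) under a change of reference component. You merely spell out the computations the paper leaves implicit, and your aside about directly verifying the deformed boundary map is a correct observation but not what the paper does here.
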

\begin{proof}
The fact that $\Lambda_P^\varepsilon \rho$ is a group homomorphism follows from the $\rho$-equivariance and composition property of the shearing maps (Propositon \ref{prop::shearing_composition}).
The continuity results from the fact that the shearing maps depend continuously on the shearing parameter $\varepsilon$.
The existence of the neighborhood $\mathcal{U}_\rho$ follows from the fact that the set of $\theta$-Anosov representations is open in $\mathrm{Hom}(\pi_1(S), G)$ by \cite[Theorem 5.13]{GuichardWienhard_Anosov}.
Finally, the representations  $\Lambda_Q^\varepsilon \rho$ and $\Lambda_P^\varepsilon \rho$ are conjugate by $\varphi_{P Q}^\varepsilon$, which follows from a short computation, using the composition property of shearing maps. 
\end{proof}

\begin{remark}
Theorem \ref{thm::cataclysm_deformation_hom} is a generalization of a result in \cite{Dreyer_Cataclysms}, which covers the special case $G = \mathrm{PSL}(n, \mathbb{R})$.
Our result works in the much more general context of $\theta$-Anosov representations into a semisimple non-compact connected Lie group $G$ for $\theta \subset \Delta$ satisfying $\iota(\theta) = \theta$. 
Futher, in contrast to \cite{Dreyer_Cataclysms}, we do not assume that the lamination $\lambda$ is maximal.
\end{remark}

\begin{remark} 
Since conjugate representations $\rho$ and $\rho'$ give conjugate shearing maps, also the deformed representations $\Lambda_P^\varepsilon \rho$ and $\Lambda_P^\varepsilon \rho'$ are conjugate.
Thus, the map $\Lambda _P$ descends to a map on the character variety $\Lambda \colon \mathcal{U}_\rho \to \mathrm{Hom}_{\theta\mathrm{-Anosov}}(\pi_1(S), G)//G$ that is independent of the choice of reference component $P$.
\end{remark}

\subsection{The boundary map}

Anosov representations are often studied through their boundary map.
Thus, it is natural to ask how the boundary map changes under a cataclysm deformation.
We have the following result:

\begin{theorem}
\label{thm::boundary_map}
Let $\rho \colon \pi_1(S) \to G$ be $\theta$-Anosov, let $\varepsilon \in \mathcal{U}_\rho$ and let $\rho' := \Lambda^\varepsilon_P \rho$ be the $\varepsilon$-cataclysm deformation of $\rho$ along $\lambda$ with respect to a reference component $P \subset \tilde{S} \setminus \tilde{\lambda}$. 
Here,  $\mathcal{U}_\rho \subset \Htwist$ is as in Theorem \ref{thm::cataclysm_deformation_hom} such that $\rho'$ is $\theta$-Anosov.
Let $\zeta$ and $\zeta'$ be the boundary maps associated with $\rho$ and $\rho'$, respectively.
Then for every $x \in \partial_\infty \tilde{\lambda}$ that is a vertex of a connected component of $\tilde{S} \setminus \tilde{\lambda}$, the boundary map $\zeta^\varepsilon$ is given by
\begin{align}
\label{eq::zeta'_on_vertices}
\zeta'(x) = \varphi^\varepsilon_{P Q_x} \cdot \zeta(x).
\end{align}
\end{theorem}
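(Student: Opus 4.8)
The plan is to verify that the map $\zeta' \colon \partial_\infty\tilde\lambda \to \mathcal F_\theta$ defined on vertices by $\zeta'(x) = \varphi^\varepsilon_{PQ_x}\cdot\zeta(x)$ is well-defined, then show it is $\rho'$-equivariant, transverse, and dynamics-preserving, and finally invoke uniqueness of boundary maps (stated in the excerpt after Definition~\ref{def::Anosov}) to conclude that it agrees with the genuine boundary map of $\rho' = \Lambda^\varepsilon_P\rho$ on the set of endpoints of leaves. Since that set is dense in $\partial_\infty\tilde S$ and both maps are continuous, they coincide everywhere that \eqref{eq::zeta'_on_vertices} makes sense.

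\textbf{Well-definedness.} A boundary point $x$ of $\tilde\lambda$ may be a vertex of more than one component $Q_x$; if $x$ is an endpoint of a leaf $g$, the two components on either side of $g$ near $x$ both have $x$ as a vertex. First I would show $\varphi^\varepsilon_{PQ}\cdot\zeta(x) = \varphi^\varepsilon_{PQ'}\cdot\zeta(x)$ whenever $Q,Q'$ are two such components. By the composition property $\varphi^\varepsilon_{PQ'} = \varphi^\varepsilon_{PQ}\,\varphi^\varepsilon_{QQ'}$ (Proposition~\ref{prop::shearing_composition}), this reduces to $\varphi^\varepsilon_{QQ'}\cdot\zeta(x) = \zeta(x)$, i.e.\ the shearing map between two components sharing the vertex $x$ fixes the flag $\zeta(x)$. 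For adjacent components this is immediate: $\varphi^\varepsilon_{QQ'}$ is a product of stretching maps $T^H_{g}$ along leaves $g$ all having $x$ as an endpoint, and each such $T^H_g = m_g\exp(H)m_g^{-1}$ fixes $P_g^+ = \zeta(x)$ (resp.\ $P_g^-$) because $\exp(\atheta)$ lies in the Levi $L_\theta = P_\theta^+\cap P_\theta^-$. For the general (infinite) case one passes to the limit using continuity of $\varphi^\varepsilon_{QQ'}$ in the exhausting sequence of finite subsets, together with the estimate of Lemma~\ref{lem::psi_P_estimate}.

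\textbf{Equivariance, transversality, dynamics.} Equivariance: for $\gamma\in\pi_1(S)$ we have $\zeta'(\gamma x) = \varphi^\varepsilon_{P\,(\gamma Q_x)}\cdot\zeta(\gamma x)$, and choosing the component $\gamma Q_x$ as the one with vertex $\gamma x$, the $\rho$-equivariance of $\zeta$ together with $\varphi^\varepsilon_{P(\gamma Q_x)} = \varphi^\varepsilon_{P(\gamma P)}\,\rho(\gamma)\,\varphi^\varepsilon_{PQ_x}\rho(\gamma)^{-1}$ (composition plus $\rho$-equivariance of shearing maps) yields $\zeta'(\gamma x) = \varphi^\varepsilon_{P(\gamma P)}\rho(\gamma)\cdot\zeta'(x) = \rho'(\gamma)\cdot\zeta'(x)$, using the very definition $\Lambda^\varepsilon_P\rho(\gamma) = \varphi^\varepsilon_{P(\gamma P)}\rho(\gamma)$. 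Transversality at two vertices $x\neq y$: pick a single component $Q$ having both $x$ and $y$ as vertices when one exists (e.g.\ the two share a bounding leaf), so $\zeta'(x) = \varphi^\varepsilon_{PQ}\cdot\zeta(x)$ and $\zeta'(y) = \varphi^\varepsilon_{PQ}\cdot\zeta(y)$ are a common $G$-translate of the transverse pair $(\zeta(x),\zeta(y))$, hence transverse; the general case follows by a chain of shearing maps along components separating $x$ from $y$, each step preserving transversality by the same argument. Dynamics-preserving: for $\gamma$ with attracting fixed point $x^+\in\partial_\infty\tilde S$, which is a vertex of some component, one checks that $\zeta'(x^+)$ is an attracting fixed flag of $\rho'(\gamma)$; since $\rho'$ is $\theta$-Anosov (Theorem~\ref{thm::cataclysm_deformation_hom}) it already has a boundary map, and a boundary-map candidate that is continuous and dynamics-preserving is forced to be it, so it suffices to observe that $\zeta'$ extends continuously from vertices to all of $\partial_\infty\tilde S$ — the Hölder estimates underlying the convergence of $\varphi^\varepsilon_{PQ}$ (Propositions~\ref{prop::Close_stretches}, \ref{prop::varphi_PQ_final}) are exactly what gives this continuity.

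\textbf{Main obstacle.} The delicate point is \emph{continuity of $\zeta'$ at non-vertex boundary points}, i.e.\ extending \eqref{eq::zeta'_on_vertices} to a well-defined continuous map on all of $\partial_\infty\tilde S$: as $x\to x_\infty$ where $x_\infty$ is not an endpoint of any leaf, the components $Q_x$ run off to infinity in $\tilde S$ and the shearing maps $\varphi^\varepsilon_{PQ_x}$ do not converge, while $\zeta(x)\to\zeta(x_\infty)$. The resolution — parallel to \cite{Bonahon_Acta, Dreyer_Cataclysms} — is that for two vertices $x,y$ close in $\partial_\infty\tilde S$ one uses $\varphi^\varepsilon_{PQ_x} = \varphi^\varepsilon_{PQ_y}\,\varphi^\varepsilon_{Q_y Q_x}$ and the fact that $\varphi^\varepsilon_{Q_y Q_x}$ is close to fixing $\zeta(x)$ — quantitatively via Lemma~\ref{lem::psi_P_estimate}, since the components separating $Q_x$ from $Q_y$ all have large divergence radius when $x,y$ are close — so that $\varphi^\varepsilon_{PQ_x}\cdot\zeta(x)$ and $\varphi^\varepsilon_{PQ_y}\cdot\zeta(y)$ stay close. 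This shows $\zeta'$ is uniformly continuous on vertices and hence extends continuously; the extension is automatically $\rho'$-equivariant, transverse and dynamics-preserving by density, and therefore equals the boundary map of $\rho'$, which proves \eqref{eq::zeta'_on_vertices}.
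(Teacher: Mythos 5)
Your proof takes a genuinely different route from the paper, and the gaps in it sit exactly where the paper has to do its serious work.

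The paper does not build a candidate boundary map from scratch. Since $\rho'$ is already known to be $\theta$-Anosov (Theorem~\ref{thm::cataclysm_deformation_hom}), its boundary map $\zeta'$ exists and is continuous a priori. The proof then establishes a single nontrivial fact, the \emph{main observation}: for $x$ a vertex of the reference component $P$, $\zeta'(x) = \zeta(x)$, i.e.\ the cataclysm leaves the flag curve unchanged at the vertices of $P$. This is the technical heart; it is proved in \cite[Prop.~6.8]{Pfeil_Cataclysms} via a linear-algebra contraction estimate from \cite[Lemma~A.6]{BPS} together with the reduction to projective Anosov representations by Pl\"ucker embedding. The formula \eqref{eq::zeta'_on_vertices} for a vertex of an arbitrary $Q_x$ then follows by a change of reference component: $\Lambda_P^\varepsilon\rho$ is conjugate to $\Lambda_{Q_x}^\varepsilon\rho$ by $\varphi_{PQ_x}^\varepsilon$, so $\zeta' = \varphi_{PQ_x}^\varepsilon\cdot\zeta'_x$, and the main observation applied to $\zeta'_x$ at $x$ (a vertex of $Q_x$) gives $\zeta'_x(x)=\zeta(x)$. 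You instead define the candidate on vertices, extend by continuity, and invoke uniqueness; this is conceivable, but as written it has two genuine gaps.

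First, the dynamics-preserving step is incorrect. You assert that the attracting fixed point $x^+$ of $\gamma\in\pi_1(S)$ is a vertex of some component of $\tilde S\setminus\tilde\lambda$. For generic $\gamma$ this is false: $x^+$ is an endpoint of a leaf of $\tilde\lambda$ only when the axis of $\gamma$ coincides with or is asymptotic to a leaf, and most elements of $\pi_1(S)$ are not of this form. So \eqref{eq::zeta'_on_vertices} does not apply at $x^+$ and you cannot read off that $\zeta'(x^+)$ is the attracting flag. The ensuing reduction (``it suffices to observe that $\zeta'$ extends continuously'') is circular: the uniqueness statement quoted after Definition~\ref{def::Anosov} requires continuity \emph{and} the dynamics-preserving property, so the latter cannot be dropped.

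Second, the continuity of the extension — which you correctly flag as the main obstacle — is sketched but not closed. In the factorization $\varphi_{PQ_x}^\varepsilon = \varphi_{PQ_y}^\varepsilon\,\varphi_{Q_yQ_x}^\varepsilon$, Lemma~\ref{lem::psi_P_estimate} controls only the relative piece $\psi_{Q_yQ_x}^\varepsilon$. The prefactor $\varphi_{PQ_y}^\varepsilon$ is \emph{unbounded} in $G$ as $Q_y$ runs toward a non-vertex $x_\infty\in\partial_\infty\tilde S$, so closeness of $\varphi^\varepsilon_{Q_yQ_x}\cdot\zeta(x)$ to $\zeta(y)$ does not propagate to closeness of $\varphi^\varepsilon_{PQ_x}\cdot\zeta(x)$ and $\varphi^\varepsilon_{PQ_y}\cdot\zeta(y)$ without a contraction estimate for the $G$-action on $\mathcal{F}_\theta$ under the diverging family $\varphi^\varepsilon_{PQ_y}$. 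That is precisely the kind of estimate the paper's main observation requires. In short, your route relocates the analytic difficulty rather than removing it, and the relocated version is not actually carried out.
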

\begin{proof}
A short computation shows that the right-hand side of \eqref{eq::zeta'_on_vertices} gives a well-defined and $\rho'$-equivariant boundary map.
The main observation for the proof is that a cataclysm deformation does not change the flag curve on the vertices of the reference component.
More precisely, if $x$ is a vertex of the reference component $P$, we have $\varphi_{P P}^\varepsilon \cdot \zeta(x)= \zeta(x) = \zeta'(x)$.
For the proof of this fact, we refer to \cite[Proposition 6.8]{Pfeil_Cataclysms}.
The idea is to first look at representations into $\SL{n}$, where we can use a linear algebraic estimate from \cite[Lemma A.6]{BPS}. 
For general $\theta$-Anosov representations into a Lie group $G$, we can use that every $\theta$-Anosov representation into a Lie group $G$ can be turned into a projective Anosov representation by composing it with a specific irreducible representation by \cite[Proposition 4.3 and Remark 4.12]{GuichardWienhard_Anosov}.

For an arbitrary point $x \in \partial_\infty \tilde{\lambda}$ that is a vertex of the component $Q_x \subset \tilde{S} \setminus \tilde{\lambda}$, we now use a trick and change the reference component:
If $x \in \partial_\infty \tilde{\lambda}$ is a vertex of the component $Q_x \subset \tilde{S} \setminus \tilde{\lambda}$, consider the $\varepsilon$-cataclysm deformation  $\rho'_x := \Lambda^\varepsilon_{Q_x} \rho$ with respect to the reference component $Q_x$.
Denote the corresponding boundary map by $\zeta'_x$. 
Then $\rho'$ is conjugated to $\rho'_x$ by $\varphi_{P Q_x}^\varepsilon$, so $\zeta' = \varphi_{P Q_x}^\varepsilon \cdot \zeta'_x$.
It follows that
\begin{align*}
\zeta'(x) = \varphi_{P Q_x}^\varepsilon \cdot \zeta'_x (x) = \varphi_{P Q_x}^\varepsilon \cdot \zeta (x),
\end{align*}
which finishes the proof.
\end{proof}

\section{Properties of cataclysms}

In addition to the lamination $\lambda$, the deformation depends on the representation $\rho$ we start with and the twisted cycle $\varepsilon$ that determines the amount of shearing. 
In this section, we ask how the deformation behaves when we change these parameters.
More precisely, we add two twisted cycles and we compose the representation $\rho$ with a group homomorphism. 

Let $\eta, \varepsilon \in \Htwist$ be small enough such that all the cataclysm deformations appearing in the following exist.
Intuitively, deforming first with $\varepsilon$ and then with $\eta$ should be the same as deforming with $\varepsilon + \eta$. 
Indeed, the following holds: 

\begin{theorem}
\label{thm::additivity}
The cataclysm deformation is additive, i.e.\ for a $\theta$-Anosov representation $\rho \colon \pi_1(S) \to G$ $\theta$-Anosov, a reference component $P \subset \tilde{S} \setminus \tilde{\lambda}$ and $\varepsilon, \eta \in \Htwist$ small enough, we have
\begin{align*}
 \Lambda_P^{\varepsilon + \eta} \rho = \Lambda_P^\varepsilon\left(\Lambda_P^\eta \rho \right).
\end{align*}
\end{theorem}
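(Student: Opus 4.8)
\textbf{Proof proposal for Theorem \ref{thm::additivity}.}

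The plan is to reduce the identity of representations to an identity of shearing maps, and then to prove that identity by using the defining formula for the shearing maps together with the additivity of transverse cycles. Write $\rho'' := \Lambda_P^\eta \rho$, with boundary map $\zeta''$ and shearing maps $\varphi''^{\,\delta}_{RS}$ (computed with respect to $\rho''$). Since both sides of the claimed equation are representations evaluated on $\gamma \in \pi_1(S)$, and since $\Lambda_P^{\varepsilon+\eta}\rho(\gamma) = \varphi^{\varepsilon+\eta}_{P(\gamma P)}\rho(\gamma)$ while $\Lambda_P^\varepsilon(\rho'')(\gamma) = \varphi''^{\,\varepsilon}_{P(\gamma P)}\rho''(\gamma) = \varphi''^{\,\varepsilon}_{P(\gamma P)}\varphi^\eta_{P(\gamma P)}\rho(\gamma)$, the theorem is equivalent to the statement that, for every pair of components $P, Q \subset \tilde S \setminus \tilde\lambda$,
\begin{align*}
\varphi^{\varepsilon+\eta}_{PQ} = \varphi''^{\,\varepsilon}_{PQ}\,\varphi^\eta_{PQ}.
\end{align*}
First I would verify the analogous fact at the level of stretching maps: by Theorem \ref{thm::boundary_map}, the boundary map $\zeta''$ of $\rho''$ satisfies $\zeta''(x) = \varphi^\eta_{PQ_x}\cdot\zeta(x)$, so for an oriented geodesic $g$ with endpoints on vertices of components between $P$ and a given component, the pair of parabolics $(P''^{\,+}_g, P''^{\,-}_g)$ for $\rho''$ is obtained from $(P^+_g, P^-_g)$ by applying $\varphi^\eta$. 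Consequently the $\rho''$-stretching map is $T''^{\,H}_g = \varphi^\eta_{P\,\cdot}\, T^H_g\, (\varphi^\eta_{P\,\cdot})^{-1}$ for the appropriate intermediate shearing map, i.e.\ the two families of stretching maps are conjugate by the $\eta$-shearing maps, with the conjugating element depending on which component the geodesic borders.

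The key step is then a telescoping/bookkeeping computation. Take a finite subset $\mathcal{C} = \{R_1, \dots, R_m\} \subset \mathcal{C}_{PQ}$ labeled from $P$ to $Q$, and write out $\varphi''^{\,\varepsilon}_{\mathcal{C}}$ using the defining formula for shearing maps applied to $\rho''$: each factor is $\bigl(T''^{\,\varepsilon(P,R_i)}_{g_i^-} T''^{\,-\varepsilon(P,R_i)}_{g_i^+}\bigr)$, which by the conjugation relation above equals $\varphi^\eta_{P R_i}\bigl(T^{\varepsilon(P,R_i)}_{g_i^-} T^{-\varepsilon(P,R_i)}_{g_i^+}\bigr)\bigl(\varphi^\eta_{P R_i}\bigr)^{-1}$ up to correctly identifying which component labels the conjugator on the left versus the right of each stretching map (the geodesic $g_i^-$ borders the component just before $R_i$, and $g_i^+$ borders $R_i$, so the conjugators telescope). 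Using the composition property $\varphi^\eta_{PR} = \varphi^\eta_{PR'}\varphi^\eta_{R'R}$ from Proposition \ref{prop::shearing_composition} and the additivity of the cycle $\varepsilon + \eta$ on the arc, the product $\varphi''^{\,\varepsilon}_{\mathcal{C}}\,\varphi^\eta_{\mathcal{C}}$ collapses to $\varphi^{\varepsilon+\eta}_{\mathcal{C}}$. Taking the limit $\mathcal{C} \to \mathcal{C}_{PQ}$, which exists for all three cycles once $\varepsilon$, $\eta$ and $\varepsilon + \eta$ are small enough by Proposition \ref{prop::varphi_PQ_final}, gives the desired identity; continuity of the limit in the relevant elements of $G$ justifies passing the product of limits to the limit of the product.

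I expect the main obstacle to be the precise bookkeeping in the telescoping step: one must be careful about \emph{which} intermediate component indexes each conjugating shearing map $\varphi^\eta$, whether it sits on the left or the right of a given stretching factor, and how the final factor $T^{\varepsilon(P,Q)}_{g_Q^-}$ (present in $\varphi_{\mathcal{C}}$ but not in $\psi_{\mathcal{C}}$) is handled — this is exactly where the $\rho$-equivariance and the twist condition need to be invoked to make the orientations match, as in the remark following Definition \ref{def::twisted_cycles}. A secondary point is the approximation argument for non-maximal laminations: as in the proof of Proposition \ref{prop::shearing_composition}, one first proves the identity for $P$ and $Q$ separated by finitely many leaves and then passes to the general case via the finite-lamination approximation of \cite[Lemma 5.8]{Bonahon_Acta}, noting that the error estimates there are uniform in the cycle for small cycles. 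Once the shearing-map identity is established, the statement about representations is immediate, and the independence of the reference component $P$ needs no separate argument since $P$ is fixed throughout.
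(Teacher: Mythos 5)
Your high-level strategy matches the paper's: reduce to the shearing-map identity ${\varphi''}^{\,\varepsilon}_{PQ}\,\varphi^\eta_{PQ} = \varphi^{\varepsilon+\eta}_{PQ}$ (this is exactly Proposition \ref{prop::additivity_shearing}), observe via Theorem \ref{thm::boundary_map} that the stretching maps for $\rho''=\Lambda_P^\eta\rho$ are conjugates ${T''}^{H}_g = \varphi^\eta_{PR}\,T^H_g\,(\varphi^\eta_{PR})^{-1}$ of those for $\rho$, and then re-expand the finite product defining ${\varphi''}^{\,\varepsilon}_{\mathcal{C}}$. However, the claim that ``the product ${\varphi''}^{\,\varepsilon}_{\mathcal{C}}\,\varphi^\eta_{\mathcal{C}}$ collapses to $\varphi^{\varepsilon+\eta}_{\mathcal{C}}$'' is not true at the level of finite truncations, and this is exactly where the real work lies. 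After inserting the conjugation relation, the conjugator connecting the stretching block for $R_i$ to the one for $R_{i+1}$ is the \emph{full limit} shearing map $\varphi^\eta_{R_iR_{i+1}}$, because $\zeta''(x)=\varphi^\eta_{PQ_x}\cdot\zeta(x)$ holds for the genuine shearing map, not for a truncated product. Since $\varphi^\eta_{R_iR_{i+1}}$ absorbs all components of $\mathcal{C}_{PQ}$ lying between $R_i$ and $R_{i+1}$ that are \emph{not} in $\mathcal{C}$, the telescoping does not close: already for a single intermediate triangle the explicit computation leaves the leftover factors $\psi^\eta_{PR}$ and $T^{\eta(P,R)}_{g_R^1}\,\psi^\eta_{RQ}\,T^{-\eta(P,R)}_{g_Q^0}$, none of which equals the identity for a finite $\mathcal{C}$.

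What remains to be shown, and is the technical heart of the proof, is that these leftover factors disappear in the limit $\mathcal{C}\to\mathcal{C}_{PQ}$, uniformly enough that the whole product converges to $\varphi^{\varepsilon+\eta}_{PQ}$. The paper does this by introducing a corrected quantity $\widehat{\psi}^{\eta,\varepsilon}_{\mathcal{C}}$ and proving $\dG\bigl(\psi^{\varepsilon+\eta}_{\mathcal{C}},\widehat{\psi}^{\eta,\varepsilon}_{\mathcal{C}}\bigr)\to 0$ using Lemma \ref{lem::psi_P_estimate} together with divergence-radius and H\"older estimates of the same kind as in Proposition \ref{prop::shearing_map}. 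Your appeal to ``continuity of the limit ... justifies passing the product of limits to the limit of the product'' does not supply this, because the discrepancy is not a continuity issue: each finite truncation has a nonzero error built in, and one must estimate it. Apart from this gap (and the minor point that the finite-lamination approximation you invoke for non-maximal $\lambda$ is not the source of the difficulty here), the skeleton of your argument and the reduction to the shearing-map level are correct.
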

Here, we need to take into account where the deformations are \emph{based}, which is not reflected in the notation.
On the left, we look at the $(\varepsilon + \eta)$-cataclysm deformation based at $\rho$, on the right, at the $\varepsilon$-cataclysm deformation based at $\Lambda_P^\eta \rho$.  

To see why Theorem \ref{thm::additivity} holds, we need to understand how the shearing maps behave under adding transverse cycles.
Let $\rho' := \Lambda^\eta \rho$ be the deformed representation. 
We denote the shearing maps for $\rho'$ with ${\varphi'}_{PQ}^\varepsilon$.

\begin{prop}
\label{prop::additivity_shearing}
Let $P$ be the fixed reference component for the cataclysm deformation $\Lambda_P$.
Then for $\varepsilon, \eta \in \Htwist$ small enough, for every component $Q \subset \tilde{S} \setminus \tilde{\lambda}$ it holds that
\begin{align*}
{\varphi}_{P Q}^{\varepsilon + \eta} = {\varphi'}_{P Q}^{\varepsilon}   {\varphi}_{P Q}^{\eta} .  
\end{align*}
\end{prop}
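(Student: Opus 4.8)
The strategy is to reduce the statement about the limiting shearing maps to the corresponding identity for the finite approximations $\varphi_{\mathcal C}^\varepsilon$, and then to prove the finite version by a direct computation with stretching maps. First I would fix a finite subset $\mathcal C = \{R_1,\dots,R_m\} \subset \mathcal C_{PQ}$, labeled from $P$ to $Q$, and write out $\varphi_{\mathcal C}^{\varepsilon+\eta}$, ${\varphi'}_{\mathcal C}^{\varepsilon}$ and $\varphi_{\mathcal C}^{\eta}$ explicitly as concatenations of the basic blocks $T_{g_i^-}^{\,\cdot}\,T_{g_i^+}^{\,-\cdot}$ followed by the final factor $T_{g_Q^-}^{\,\cdot}$. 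The key point is that the stretching maps $T_g^H$ for $\rho' = \Lambda^\eta_P \rho$ are conjugates of those for $\rho$: by Theorem \ref{thm::boundary_map} the boundary map of $\rho'$ is $\zeta' = \varphi^\eta_{P\,\cdot}\cdot\zeta$, so for a geodesic $g$ bounding a component $Q'$ between $P$ and $Q$ the pair $(P_g^{\prime +}, P_g^{\prime -})$ equals $\varphi^\eta_{PQ'}\cdot(P_g^+,P_g^-)$, whence ${T'}_g^H = \varphi^\eta_{PQ'}\,T_g^H\,(\varphi^\eta_{PQ'})^{-1}$. I also record that, by additivity of the transverse cycle, $(\varepsilon+\eta)(P,R) = \varepsilon(P,R) + \eta(P,R)$, and that stretching maps along a fixed geodesic add in the exponent (Lemma \ref{lem::stretch_properties}(1)), so $T_g^{(\varepsilon+\eta)(P,R)} = T_g^{\varepsilon(P,R)}\,T_g^{\eta(P,R)}$.

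The heart of the argument is then a telescoping/bookkeeping computation: substituting ${T'}_g^H = \varphi^\eta_{P\,\cdot}\,T_g^H\,(\varphi^\eta_{P\,\cdot})^{-1}$ into the definition of ${\varphi'}_{\mathcal C}^{\varepsilon}$, the inner conjugating factors $\varphi^\eta_{P R_i}$, which by Proposition \ref{prop::shearing_composition} (composition property) can themselves be written as partial products of the $\eta$-blocks, cancel against the adjacent pieces of $\varphi_{\mathcal C}^\eta$, and what survives reassembles exactly into $\varphi_{\mathcal C}^{\varepsilon+\eta}$. Concretely one shows, by induction on the number of components in $\mathcal C$, that ${\varphi'}_{\mathcal C}^{\varepsilon}\,\varphi_{\mathcal C}^{\eta} = \varphi_{\mathcal C}^{\varepsilon+\eta}$ for every finite $\mathcal C$; the inductive step adds one more component $R$ (with $R$ separating the others) and uses the conjugation formula for ${T'}^{\,\cdot}_{g_R^{\pm}}$ together with the composition identity $\varphi^\eta_{PR'} = \varphi^\eta_{PR}\,\varphi^\eta_{RR'}$. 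Care is needed with the trailing factor $T_{g_Q^-}^{(\varepsilon+\eta)(P,Q)} = T_{g_Q^-}^{\varepsilon(P,Q)}T_{g_Q^-}^{\eta(P,Q)}$ and the fact that in ${\varphi'}^\varepsilon_{\mathcal C}$ this trailing factor is ${T'}_{g_Q^-}^{\varepsilon(P,Q)} = \varphi^\eta_{PQ}\,T_{g_Q^-}^{\varepsilon(P,Q)}\,(\varphi^\eta_{PQ})^{-1}$, so one has to verify the conjugating factor $\varphi^\eta_{PQ}$ is precisely absorbed.

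Finally I would pass to the limit $\mathcal C \to \mathcal C_{PQ}$. By Proposition \ref{prop::shearing_map} (and Proposition \ref{prop::varphi_PQ_final}) each of $\varphi_{\mathcal C}^{\varepsilon+\eta}$, ${\varphi'}_{\mathcal C}^{\varepsilon}$, $\varphi_{\mathcal C}^{\eta}$ converges, for $\varepsilon,\eta$ small enough, to $\varphi^{\varepsilon+\eta}_{PQ}$, ${\varphi'}^{\varepsilon}_{PQ}$, $\varphi^{\eta}_{PQ}$ respectively — here one uses that $\norm{\varepsilon+\eta} \le \norm{\varepsilon} + \norm{\eta}$ stays below the convergence threshold, and that the shearing maps of the Anosov representation $\rho'$ are controlled by the same kind of estimates since $\rho'$ is again $\theta$-Anosov. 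Since multiplication in $G$ is continuous, passing to the limit in the finite identity ${\varphi'}_{\mathcal C}^{\varepsilon}\,\varphi_{\mathcal C}^{\eta} = \varphi_{\mathcal C}^{\varepsilon+\eta}$ yields ${\varphi'}^{\varepsilon}_{PQ}\,\varphi^{\eta}_{PQ} = \varphi^{\varepsilon+\eta}_{PQ}$, which is the claim. The main obstacle I anticipate is the middle step: organizing the cancellation of the conjugating factors cleanly, because the naive substitution produces a product in which the $\eta$-blocks of $\varphi^\eta_{PR_i}$ are interleaved with the $\varepsilon$-blocks in a way that only telescopes after carefully using the composition property of the (already-limiting, in the non-maximal case) shearing maps; doing the induction so that the bookkeeping stays honest — in particular handling the approximation-by-finite-lamination device from the proof of Proposition \ref{prop::shearing_composition} when $\lambda$ is not maximal — is where the real work lies.
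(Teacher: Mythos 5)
Your overall strategy — conjugation formula for ${T'}_g^H$, algebraic rearrangement, then pass to the limit — matches the paper's, and the conjugation formula ${T'}_g^H = \varphi^\eta_{P Q'}\,T_g^H\,(\varphi^\eta_{P Q'})^{-1}$ is exactly the key observation. However, the middle step contains a genuine gap: the finite exact identity ${\varphi'}_{\mathcal{C}}^{\varepsilon}\,\varphi_{\mathcal{C}}^{\eta} = \varphi_{\mathcal{C}}^{\varepsilon+\eta}$ you propose to prove by induction is false. The reason is that the stretching maps ${T'}^{H}_{g}$ entering ${\varphi'}_{\mathcal{C}}^{\varepsilon}$ are conjugated by the \emph{limiting} shearing maps $\varphi^\eta_{P R_i}$, which see the entire (infinite) collection $\mathcal{C}_{PQ}$, whereas $\varphi_{\mathcal{C}}^{\varepsilon+\eta}$ and $\varphi_{\mathcal{C}}^{\eta}$ are finite truncations that only see $\mathcal{C}$. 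You can already see this for $\mathcal{C} = \emptyset$ when $\mathcal{C}_{PQ}$ is nonempty: then ${\varphi'}_\emptyset^{\varepsilon}\,\varphi_\emptyset^\eta = \varphi^\eta_{PQ}\,T_{g_Q^0}^{\varepsilon(P,Q)}\,(\varphi^\eta_{PQ})^{-1}\,T_{g_Q^0}^{\eta(P,Q)}$, which equals $\varphi_\emptyset^{\varepsilon+\eta} = T_{g_Q^0}^{(\varepsilon+\eta)(P,Q)}$ only if $\varphi^\eta_{PQ}$ commutes with $T_{g_Q^0}^{\varepsilon(P,Q)}$, and there is no reason for that once $\mathcal{C}_{PQ}$ is nonempty.

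What actually happens (and what the paper does) is that, after substituting the conjugation formula and using the identities $\varphi^\eta_{PR_i} = \psi^\eta_{PR_i}\,T^{\eta(P,R_i)}_{g_i^0}$ and the composition property, one expresses ${\varphi'}_{\mathcal{C}}^{\varepsilon}$ as a product that is \emph{interleaved} with the factors $\psi^\eta_{R_i R_{i+1}}$ — limiting objects that are nontrivial but close to $\Id$ when the $R_i$ become densely spaced. Defining $\widehat{\psi}_{\mathcal{C}}^{\eta,\varepsilon} := {\varphi'}_{\mathcal{C}}^{\varepsilon}\,\bigl(T^{(\varepsilon+\eta)(P,Q)}_{g_Q^0}(\varphi^\eta_{PQ})^{-1}\bigr)^{-1}$, the right thing to prove is that $\dG\bigl(\psi_{\mathcal{C}}^{\varepsilon+\eta},\,\widehat{\psi}_{\mathcal{C}}^{\eta,\varepsilon}\bigr) \to 0$ as $\mathcal{C} \to \mathcal{C}_{PQ}$, which requires estimates of the type in Corollary \ref{cor::adjacent_stretches} and Lemma \ref{lem::psi_P_estimate} rather than a telescoping induction. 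Your concluding paragraph anticipates that "organizing the cancellation cleanly" is delicate; the honest resolution is that it does not cancel exactly at the finite level, and the argument is analytic, not purely algebraic.
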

\begin{proof}
Let ${T'}_g^H$ denote the stretching map for the deformed representation $\rho'$.
We first look at the stretching maps $T^H_g$ and ${T'}^H_g$ for an oriented geodesic $g$ and $H \in \atheta$. 
In Theorem \ref{thm::boundary_map} we proved that if $x$ is a vertex of the component $Q$, then that $\zeta'(x) = \varphi_{P Q}^{\eta} \! \cdot \!  \zeta(x)$.
Thus, if $g$ is an oriented geodesic bounding the component $Q$, the stretching maps satisfy 
\begin{align}
\label{eq::additivity_T'_conjugated}
{T'}_g^{H}  = \varphi_{PQ}^\eta {T}_g^{H} \left(\varphi_{PQ}^\eta \right)^{-1} ,
\end{align}
so ${T'}_g^{H}$ is conjugated to ${T}_g^{H}$ by $\varphi_{P Q}^\eta$.

We now consider a one-element subset $\{ R\} \subset \mathcal{C}_{PQ}$. 
Recall that $\psi_{P R}^\varepsilon = \varphi_{P R}^\varepsilon T_{g_R^0}^{-\varepsilon(P,R)}$.
Using \eqref{eq::additivity_T'_conjugated} and additivity of the cycle, we compute
\begin{align*}
{\varphi'}_{\{ R\}}^\varepsilon &= \left( {T'}_{g_R^0}^{\varepsilon(P,R)} {T'}_{g_R^1}^{-\varepsilon(P,R)} \right) {T'}_{g_Q^0}^{\varepsilon(P,Q)} \\
&= \left( \psi_{PR}^\eta  {T}_{g_R^0}^{\eta(P,R)} \right) 
\left( {T}_{g_R^0}^{\varepsilon(P,R)} {T}_{g_R^1}^{-\varepsilon(P,R)} \right)
\varphi_{RQ}^\eta {T}_{g_Q^0}^{\varepsilon(P,Q)} \left(\varphi_{PQ}^\eta \right)^{-1}  \\
&=   \psi_{PR}^\eta\left( {T}_{g_R^0}^{(\varepsilon + \eta) (P,R)} {T}_{g_R^1}^{-(\varepsilon+\eta)(P,R)} \right)
\left( {T}_{g_R^1}^{\eta(P,R)} \psi_{RQ}^\eta {T}_{g_Q^0}^{-\eta(P,R)} \right)
{T}_{g_Q^0}^{(\varepsilon+\eta)(P,Q)} \left(\varphi_{PQ}^\eta \right)^{-1}.
\end{align*}
Using the same techniques on a finite subset $\mathcal{C} = \{R_1, \dots, R_{m} \} \subset \mathcal{C}_{PQ}$ we find
\begin{align*}
{\varphi'}_{\mathcal{C}}^{\varepsilon} &= \prod_{i=1}^{m} \left( {T'}_{g_i^0}^{\varepsilon(P,R_i)} {T'}_{g_i^1}^{-\varepsilon(P,R_i)} \right) \quad {T'}_{g_Q^0}^{\varepsilon(P,Q)} \\
&=  \psi_{PR_1}^\eta \prod_{i=1}^{m} \left( \left( {T}_{g_i^0}^{(\varepsilon + \eta) (P,R_i)} {T}_{g_i^1}^{-(\varepsilon+\eta)(P,R_i)} \right)
\left( {T}_{g_i^1}^{\eta(P,R_i)} \psi_{R_i R_{i+1}}^\eta {T}_{g_{i+1}^0}^{-\eta(P,R_i)} \right) \right) \\ 
& \qquad \qquad {T}_{g_Q^0}^{(\varepsilon+\eta)(P,Q)} \left(\varphi_{PQ}^\eta \right)^{-1}.
\end{align*}
Set
\begin{align*}
\widehat{\psi}_{\mathcal{C}}^{\eta, \varepsilon} := {\varphi'}_{\mathcal{C}}^\varepsilon \left( {T}_{g_Q^0}^{(\varepsilon+\eta)(P,Q)} \left(\varphi_{PQ}^\eta \right)^{-1} \right)^{-1}.
\end{align*}
We claim that $\lim_{\mathcal{C} \to \mathcal{C}_{PQ} } \widehat{\psi}_{\mathcal{C}}^{\eta, \varepsilon} = \psi_{PQ}^{\varepsilon + \eta}$.
Recall that 
\begin{align*}
\psi_{PQ}^{\varepsilon+ \eta} = \lim_{\mathcal{C} \to \mathcal{C}_{PQ} } \psi_{\mathcal{C}}^{\varepsilon + \eta} = \lim_{m \to \infty} \prod_{i = 1}^{m} \left( {T}_{g_i^0}^{(\varepsilon + \eta) (P,R_i)} {T}_{g_i^1}^{-(\varepsilon+\eta)(P,R_i)} \right).
\end{align*}
This converges to $\psi_{PQ}^{\varepsilon + \eta}$ as $r$ goes to infinity.
Thus, it suffices to show that $\dG(\psi_{\mathcal{C}}^{\varepsilon + \eta} , \widehat{\psi}_{\mathcal{C}}^{\eta, \varepsilon} )$ tends to $0$ as $\mathcal{C}$ tends to $\mathcal{C}_{PQ}$.
The proof of this is technical, but does not use any new techniques or ideas.
Thus, we omit it here and refer the interested reader to \cite[Appendix A.2]{Pfeil_Cataclysms} for the details.

It follows that 
\begin{align*}
{\varphi'}_{PQ}^\varepsilon &= \lim_{\mathcal{C} \to \mathcal{C}_{PQ}} {\varphi'}_{\mathcal{C}}^{\varepsilon} \\
&= \lim_{\mathcal{C} \to \mathcal{C}_{PQ}} {\widehat{\psi}}_{\mathcal{C}}^{\eta, \varepsilon} T^{(\varepsilon + \eta)(P,Q)}_{g_Q^0} \left( \varphi_{PQ}^\eta \right)^{-1} \\
&= \psi_{PQ}^{\varepsilon + \eta} T^{(\varepsilon + \eta)(P,Q)}_{g_Q^0} \left( \varphi_{PQ}^\eta \right)^{-1} \\
&= \varphi_{PQ}^{\varepsilon + \eta}  \left( \varphi_{PQ}^\eta \right)^{-1},
\end{align*}
which finishes the proof.
\end{proof}

Theorem \ref{thm::additivity} is now a direct consequence of Proposition \ref{prop::additivity_shearing}. \\

Now we compose the $\theta$-Anosov representation $\rho \colon \pi_1(S) \to G$ with a homomorphism $\kappa \colon G \to G'$, where $G'$ is another semisimple connected non-compact Lie group with finite center.
We denote all objects associated with $G'$ with a prime. 
Denote by $\kappa_* \colon \mathfrak{a} \to \mathfrak{a}'$ the map induced by $\kappa$.
Let $W'_{\theta'}$ be the subgroup of the Weyl group $W'$ for $G'$ that fixes $\mathfrak{a}'_{\theta'}$ pointwise.
In \cite[Proposition 4.4]{GuichardWienhard_Anosov}, Guichard and Wienhard give sufficient conditions such that $\kappa \circ \rho$ is $\theta'$-Anosov for some $\theta' \subset \Delta'$:
\begin{prop}[{\cite[Proposition 4.4]{GuichardWienhard_Anosov}}]
\label{prop::Anosov_composition}
Let $\kappa \colon G \to G'$ be a Lie group homomorphism and assume that $\kappa(K) \subset K'$ and $\kappa(\mathfrak{a}) \subset \mathfrak{a}'$.
Let $\theta \subset \Delta$ and suppose that there exist $w' \in W'$ and $\theta' \subset \Delta'$ such that
\begin{align}
\label{eq::Anosov_composition_assumption}
\kappa_* \left( \overline{\mathfrak{a}^+} \setminus \bigcup_{\alpha \in \theta} \ker(\alpha) \right) \subset w' \cdot W'_{ \theta'} \cdot \left( \overline{{\mathfrak{a}'}^+} \setminus \bigcup_{\alpha \in  \theta'} \ker(\alpha') \right).
\end{align}
Then for any $\theta$-Anosov representation $\rho \colon \pi_1(S) \to G$, the representation $\kappa \circ \rho$ is ${\theta'}$-Anosov.
Furthermore $\kappa(P_\theta^\pm) \subset w'{P'}_{\theta'}^\pm {w'}^{-1}$, and there is an induced map $\kappa^+ \colon \mathcal{F}_\theta \to \mathcal{F}'_{\theta'}$.
If $\zeta \colon \partial_\infty \tilde{S} \to \mathcal{F}_\theta$ is the boundary map associated to $\rho$, then the boundary map for $\kappa \circ \rho$ is $\kappa^+ \circ \zeta$.
\end{prop}
The assumption \eqref{eq::Anosov_composition_assumption} guarantees that if $H \in \overline{\mathfrak{a}^+}$ does not lie in a wall of the Weyl chamber corresponding to an element  $\alpha \in \theta$, then also its image under $\kappa_*$ stays away from the walls corresponding to the elements $\alpha' \in \theta'$.
Note that the element $w'$ permuting the Weyl chambers cannot be omitted (see \cite[Example 5.24]{Pfeil_Cataclysms}).
\begin{remark}
The notation is used in \cite{GuichardWienhard_Anosov} is different from the notation we use -- what they call $\theta$-Anosov is $\left(\Delta \setminus \theta\right)$-Anosov in our notation.
Moreover, in their paper, there is a typing error in the statement and proof of their Proposition 4.4: In the assumption \eqref{eq::Anosov_composition_assumption},  $\theta$ and $\Delta \setminus \theta$ are reversed.
Here, we use our notational convention of $\theta$-Anosov and the  corrected assumption \eqref{eq::Anosov_composition_assumption}.
\end{remark}
The fact that $\kappa \circ \rho$ is again Anosov now triggers the question how such compositions of representations behave under cataclysm deformations. 
Indeed, under an additional assumption, we have:

\begin{prop}
\label{prop::cataclysm_equivariant}
Let $\kappa \colon G \to G'$ be a Lie group homomorphism and assume that $\kappa(K) \subset K'$ and $\kappa(\mathfrak{a}) \subset \mathfrak{a}'$.
Let $\theta \subset \Delta$ and $\theta' \subset \Delta'$ such that \eqref{eq::Anosov_composition_assumption} is satisfied. 
Further, assume that $\kappa_*(\atheta) \subset \mathfrak{a}'_{\theta'}$. 
Let $\rho \colon \pi_1(S) \to G$ be $\theta$-Anosov and let $\varepsilon \in \Htwist$ be sufficiently small such that $\Lambda^\varepsilon_0 \rho$ exists.
Then $\kappa \circ \rho$ is $\theta'$-Anosov and 
\begin{align}
\label{eq::cataclysm_equivariant}
\Lambda^{\kappa_*\varepsilon}_P \left( \kappa \circ \rho \right) = \kappa \left(\Lambda_P^\varepsilon \rho \right).
\end{align}
\end{prop}
Note that assumption $\kappa_*(\atheta) \subset \mathfrak{a}'_{\theta'}$ is not implied by \eqref{eq::Anosov_composition_assumption} as we will see in Example \ref{ex::composition_counterexample}.
Further, we want to remark that the notation $\Lambda_P$ for the cataclysm deformation does not encode the group $G$ containing the image of the representation $\rho$. 
This is given implicitly by the cycle.
In particular, in Proposition \ref{prop::cataclysm_equivariant}, $\Lambda^{\kappa_*\varepsilon}_P$ is the deformation of a representation with values in $G'$, and $\Lambda^{\varepsilon}_P$ is the deformation of a representation with values in $G$.

\begin{proof}[Proof of Proposition \ref{prop::cataclysm_equivariant}]
By Proposition \ref{prop::Anosov_composition}, the composition $\kappa \circ \rho$ is Anosov and the boundary map for $\kappa \circ \rho$ is given by $\kappa^+ \circ \zeta$, where $\kappa^+ \colon \mathcal{F}_\theta \to \mathcal{F}'_{\theta'}$ is a map induced by $\kappa$.
It follows that the stretching maps for $\kappa \circ \rho$ satisfy $T^{\kappa_* H}_g = \kappa \left( T^H_g \right)$ for any $H \in \atheta$ and oriented geodesic $g$. 
Here, $T^H_g$ is the stretching map for the representation $\rho$. 
Consequently, the shearing maps satisfy $\varphi_{PQ}^{\kappa_* \varepsilon} = \kappa \left( \varphi_{PQ}^\varepsilon \right)$, and the claim follows by the definition of $\Lambda_P$.
\end{proof}
We end this section with two examples - one family of representations for which the prerequisites of Proposition \ref{prop::cataclysm_equivariant} are satisfied, and one representation where it is not.

\begin{ex}
\label{ex::horocyclic}
An example where we can apply Proposition \ref{prop::cataclysm_equivariant} are \emph{$(n,k)$-horocyclic representations}.
They stabilize a $k$-dimensional subspace of $\mathbb{R}^n$ and are obtained from composing a discrete and faithful representation $\rho_0 \colon \pi_1(S) \to \SL{2}$ with the reducible representation $\iota_{n,k} \colon \SL{2} \to \SL{k}$ defined by
\begin{align*}
\iota_{n,k} \colon \SL{2} &\to \SL{n}, \\
\begin{pmatrix}
a & b \\ c & d 
\end{pmatrix} 
&\mapsto
\begin{pmatrix}
a \Id_k & 0 & b \Id_k \\ 0 & \Id_{n-2k} & 0 \\ c \Id_k & 0 & d \Id_k
\end{pmatrix}.
\end{align*}
One can check directly that $\iota_{n,k}$ satisfies all prerequisites of Proposition \ref{prop::cataclysm_equivariant}, so we have $
\Lambda^{{\iota_{n,k}}_*\varepsilon}_0 \left( \iota_{n,k} \circ \rho_0 \right) = \iota_{n,k} \left(\Lambda_P^\varepsilon \rho_0 \right)$.
In particular, if we deform an $(n,k)$-horocyclic representation using cycles in $\left(\iota_{n,k}\right)_* \left( \Htwist \right)$, then the resulting representation is again $(n,k)$-horocyclic.
We will look more closely at deformations of $(n,k)$-horocyclic representations in Section \ref{sec::horocyclic}.

\end{ex}
\begin{ex}
\label{ex::composition_counterexample}
We now give an example where $\kappa \circ \rho$ is an Anosov representation, but the additional assumption $\kappa_*(\atheta) \subset \mathfrak{a}'_{\theta'}$ in Proposition \ref{prop::cataclysm_equivariant} is not satisfied.
Let $G= \SL{5}$, $\theta = \{ \alpha_2, \alpha_3\}$ and let $\kappa := \bigwedge^2_5 \colon \SL{5} \to \SL{10}$ be the exterior power representation.
Let $\theta' := \{ \alpha_1, \alpha_9\}$.
Up to applying an element in $W'_{\theta'}$,  $\left( \bigwedge^2_5 \right)_* (H)$ lies in $\overline{{\mathfrak{a}'}^+}$  for all  $H \in \overline{\mathfrak{a}^+} \setminus \bigcap_{\alpha \in \theta} \ker (\alpha)$, so \eqref{eq::Anosov_composition_assumption} is satisfied and by \cite[Propositon 4.4]{GuichardWienhard_Anosov}, $\kappa \circ \rho$ is $\theta'$-Anosov. 
However, it is easy to check that the additional assumption $\left( \bigwedge^2_5\right)_*(\atheta) \subset \mathfrak{a}'_{\theta'}$ from Proposition \ref{prop::cataclysm_equivariant} is not satisfied.
This example shows that Proposition \ref{prop::additivity_shearing} does not apply in general when we compose an Anosov representation with an irreducible representation into $\SL{n}$ in order to obtain a projective Anosov representation.
\end{ex}

\section{Injectivity properties of cataclysms}

For Teichm\"uller space, cataclysms are injective and give rise to shearing coordinates (see \cite{Bonahon_Shearing}). 
This motivates the question if they are also injective in the general case.
For Hitchin representations into $\mathrm{PSL}(n, \mathbb{R})$, this is true (see Corollary \ref{cor::injective_SLn}).
However, for general $\theta$-Anosov representations, it does not hold.
In this section, we show that for $(n,k)$-horocyclic representations, the cataclysm deformation is not injective.
We then present a sufficient condition for injectivity.

\subsection{Horocyclic representations}
\label{sec::horocyclic}

Let $\rho$ be an $(n,k)$-horocyclic representation as introduced in Example \ref{ex::horocyclic}, obtained from composing a Fuchsian representation into $\SL{2}$ with the reducible representation $\iota_{n,k}$ whose image stabilizes a $k$-plane.
A horocyclic representation is reducible and has non-trivial centralizer, which will play an important role in the proof of the following result.

\begin{theorem}
\label{thm::Htrivial}
Let $\rho \colon \pi_1(S) \to \SL{n}$ be $(n,k)$-horocyclic.
Then there is a subspace $\mathcal{H}_{\mathrm{trivial}} \subset \mathcal{H}(\widehat{\lambda}; \atheta)$ such that $\Lambda^\varepsilon_0 \rho = \rho$ if and only if $\varepsilon \in \mathcal{H}_{\mathrm{trivial}}$.
The subspace $\mathcal{H}_{\mathrm{trivial}} $ depends only on the lamination $\lambda$.
The dimension of $\mathcal{H}_{\mathrm{trivial}}$ can be estimated by
\begin{align}
\label{eq::dimension_kerf_estimate}
-\chi(\lambda) + n(\lambda) - 2g(S) \leq \dim \mathcal{H}_{\mathrm{trivial}} \leq -\chi(\lambda) + n(\lambda),,
\end{align}
where $\chi(\lambda)$ is the Euler characteristic of $\lambda$ and $n(\lambda)$ the number of connected components.
\end{theorem}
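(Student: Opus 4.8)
The plan is to exploit the reducible structure of a horocyclic representation. Write $\rho = \iota_{n,k}\circ\rho_0$ with $\rho_0\colon\pi_1(S)\to\SL{2}$ Fuchsian, set $\kappa := \iota_{n,k}$, and assume $2k<n$; then $\rho$ is $\theta$-Anosov for $\theta = \{\alpha_k,\alpha_{n-k}\}$, and $\atheta$ splits as a sum of $\iota$-eigenspaces $\atheta = \mathbb{R}H_1\oplus\mathfrak{z}$. Here $\mathbb{R}H_1 = \kappa_*(\mathfrak{a}_{\SL{2}})$ is the $(+1)$-eigenspace of $\iota$, while $\mathfrak{z}$, the line of diagonal traceless matrices acting by a single scalar on the $2k$-dimensional block $W_1\subset\mathbb{R}^n$ on which $\kappa(\SL{2})$ acts non-trivially, is the $(-1)$-eigenspace. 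Two facts about $\mathfrak{z}$, immediate from the matrix form of $\kappa$, will drive the proof: $\exp(\mathfrak{z})$ centralises $\mathrm{image}(\rho) = \kappa(\SL{2})$, and $\exp(\mathfrak{z})$ fixes every flag $\zeta(x) = \kappa^+(\zeta_0(x))$. Accordingly every small $\varepsilon\in\Htwist$ decomposes uniquely as $\varepsilon = \varepsilon_1 + \varepsilon_2$ with $\varepsilon_1 = \kappa_*\delta$ for an $\mathfrak{a}_{\SL{2}}$-valued twisted cycle $\delta$, and $\varepsilon_2$ a $\mathfrak{z}$-valued twisted cycle, i.e.\ (identifying $\mathfrak{z}\cong\mathbb{R}$) an element of $\mathcal{H}(\widehat\lambda;\mathbb{R})^-$.

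The first key point is that the $\mathfrak{z}$-direction is degenerate. Taking $m_g = \kappa(h_g)$ in Definition \ref{def::stretch_map} and using $\exp(H)\in Z(\kappa(\SL{2}))$ for $H\in\mathfrak{z}$, one gets $T^H_g = \exp(H)$ for every oriented geodesic $g$; hence all the bracketed factors in the definition of $\varphi^{\varepsilon_2}_{\mathcal{C}}$ cancel and $\varphi^{\varepsilon_2}_{PQ} = \exp(\varepsilon_2(P,Q))$, with no limit needed. By Theorem \ref{thm::boundary_map} and the fact that $\exp(\mathfrak{z})$ fixes each $\zeta(x)$, the representation $\rho' := \Lambda^{\varepsilon_2}_0\rho$ has boundary map $\zeta' = \zeta$, hence the same stretching maps, hence the same $\varepsilon_1$-shearing maps as $\rho$. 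Plugging this into Proposition \ref{prop::additivity_shearing} (with $\eta = \varepsilon_2$) and using that $\varphi^{\varepsilon_1}_{PQ}\in\kappa(\SL{2})$ commutes with $\exp(\mathfrak{z})$, I obtain for all $\gamma\in\pi_1(S)$
\begin{align*}
\Lambda^\varepsilon_0\rho(\gamma) &= \varphi^{\varepsilon_1+\varepsilon_2}_{P(\gamma P)}\,\rho(\gamma) = \exp\!\big(\varepsilon_2(P,\gamma P)\big)\,\Lambda^{\varepsilon_1}_0\rho(\gamma)\\
&= \exp\!\big(\varepsilon_2(P,\gamma P)\big)\,\kappa\!\big(\Lambda^\delta_0\rho_0(\gamma)\big),
\end{align*}
the last equality by Proposition \ref{prop::cataclysm_equivariant} (whose hypotheses $\kappa$ satisfies, cf.\ Example \ref{ex::horocyclic}).

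Now $\Lambda^\varepsilon_0\rho = \rho = \kappa(\rho_0)$ is equivalent to $\exp(\varepsilon_2(P,\gamma P)) = \kappa\big(\rho_0(\gamma)\Lambda^\delta_0\rho_0(\gamma)^{-1}\big)$ for all $\gamma$; as $\exp(\mathfrak{z})\cap\kappa(\SL{2}) = \{\Id\}$, both sides must equal $\Id$. So $\varepsilon_2(P,\gamma P) = 0$ for all $\gamma$, and $\Lambda^\delta_0\rho_0 = \rho_0$; the latter forces $\delta = 0$, i.e.\ $\varepsilon_1 = 0$, since cataclysms of the Fuchsian representation $\rho_0$ are injective (classical injectivity of earthquakes, or Corollary \ref{cor::injective_SLn} with $n=2$). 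Conversely, if $\varepsilon_1 = 0$ and $\varepsilon_2(P,\gamma P) = 0$ for all $\gamma$, the displayed formula gives $\Lambda^\varepsilon_0\rho = \rho$. Hence
\begin{align*}
\mathcal{H}_{\mathrm{trivial}} = \big\{\,\varepsilon_2\in\mathcal{H}^{\mathrm{Twist}}(\widehat\lambda;\mathfrak{z})\ :\ \varepsilon_2(P,\gamma P) = 0\ \text{ for all }\gamma\in\pi_1(S)\,\big\},
\end{align*}
a linear subspace; the twist condition makes it independent of the choice of $P$, and as $\mathcal{H}^{\mathrm{Twist}}(\widehat\lambda;\mathfrak{z})\cong\mathcal{H}(\widehat\lambda;\mathbb{R})^-$ and the defining conditions involve only $\lambda$, it depends only on $\lambda$.

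Finally, $\varepsilon_2\mapsto(\gamma\mapsto\varepsilon_2(P,\gamma P))$ is a well-defined linear map $\mathcal{H}^{\mathrm{Twist}}(\widehat\lambda;\mathfrak{z})\to\mathrm{Hom}(\pi_1(S),\mathbb{R}) = H^1(S;\mathbb{R})\cong\mathbb{R}^{2g(S)}$, the homomorphism property being a consequence of finite additivity of the cycle and of the fact that $\pi_1(S)$ acts on $\tilde S$ by deck transformations, which do not change the projection to $S$ of a transverse arc. Its kernel is $\mathcal{H}_{\mathrm{trivial}}$, whence
\begin{align*}
\dim\mathcal{H}^{\mathrm{Twist}}(\widehat\lambda;\mathfrak{z}) - 2g(S)\ \le\ \dim\mathcal{H}_{\mathrm{trivial}}\ \le\ \dim\mathcal{H}^{\mathrm{Twist}}(\widehat\lambda;\mathfrak{z});
\end{align*}
and the computation in the proof of Proposition \ref{prop::dim_Htwist} gives $\dim\mathcal{H}^{\mathrm{Twist}}(\widehat\lambda;\mathfrak{z}) = \dim\mathcal{H}(\widehat\lambda;\mathbb{R})^- = -\chi(\lambda)+n(\lambda)$, since $\mathfrak{z}$ is one-dimensional with $\iota$ acting by $-1$ — which is exactly \eqref{eq::dimension_kerf_estimate}. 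The step I expect to be the main obstacle is the second paragraph: proving that the $\mathfrak{z}$-summand acts through a genuinely constant family of stretching maps, and that it does not interfere with the $\mathbb{R}H_1$-summand, is what pins $\mathcal{H}_{\mathrm{trivial}}$ down to precisely the $\mathfrak{z}$-valued cycles with vanishing periods, and it relies on the explicit reducible form of $\rho$ and the structure of its centraliser.
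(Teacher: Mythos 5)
Your proposal is correct and follows essentially the same route as the paper: decompose $\atheta$ as $\kappa_*(\mathfrak{a}_{\SL{2}})\oplus\mathfrak{z}$, observe that $\exp(\mathfrak{z})$ centralises $\kappa(\SL{2})$ so the $\mathfrak{z}$-stretching maps are constant and the $\mathfrak{z}$-shearing maps reduce to $\exp(\varepsilon_2(P,Q))$, reduce the $\kappa_*(\mathfrak{a}_{\SL{2}})$-part to a Fuchsian cataclysm via Proposition~\ref{prop::cataclysm_equivariant} and its injectivity, and identify $\mathcal{H}_{\mathrm{trivial}}$ as the kernel of the period homomorphism into $\mathrm{Hom}(\pi_1(S),\mathbb{R})$, yielding the dimension bounds. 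You are somewhat more explicit than the paper in invoking Theorem~\ref{thm::boundary_map} and Proposition~\ref{prop::additivity_shearing} to justify the splitting of $\Lambda^\varepsilon_P\rho$ into commuting factors, but the content is the same.
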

\begin{proof}
We can split up $\atheta$ as $\atheta = \left(\iota_{n,k}\right)_*(\mathfrak{a}_2) \oplus \mathfrak{a}'$, where $\mathfrak{a}_2$ is the maximal abelian subalgebra for $\SL{2}$, $\left(\iota_{n,k}\right)_* \colon \mathfrak{a}_2 \to \mathfrak{a}$ is induced from $\iota_{n,k}$ and $\mathfrak{a}'$ is the $1$-dimensional subspace given by
\begin{align*}
\mathfrak{a}' :=  \left\{ \begin{pmatrix}
a  \Id_k& & \\ & -2a \Id_{n-2k} & \\ & & a \Id_k
\end{pmatrix} \middle| a \in \mathbb{R} \right\} \subset \mathfrak{a}.
\end{align*}
By Example \ref{ex::horocyclic}, we know that for cycles with values in $\left(\iota_{n,k}\right)_*\left( \mathfrak{a}_2 \right)$, the cataclysm deformation based at $\rho$ is injective, since it is the image under $\iota_{n,k}$ of a cataclysm deformation of the representation $\rho_0$ in $\SL{2}$. 
These are injective by Corollary \ref{cor::injective_SLn} below.
For cycles with values in $\mathfrak{a}'$ we observe that $\exp(\mathfrak{a}')$ lies in the centralizer of $\iota_{n,k}(\SL{2})$. 
It follows that the stretching maps satisfy $T_g^H = \exp(H)$ for all $H \in \mathfrak{a}'$ and all oriented geodesics $g$.
By additivity of the cycle, this implies that $\varphi_{PQ}^\varepsilon = \exp\left(\varepsilon(P,Q) \right)$ for all connected components $P$ and $Q$ and all $\mathfrak{a}'$-valued transverse twisted cycles $\varepsilon \in \mathcal{H}^\mathrm{Twist}\left(\olambda; \mathfrak{a}'\right)$.
By definition of the cataclysm deformation, we thus have
\begin{align}
\label{eq::horocyclic_trivial_iff}
\Lambda_P^\varepsilon \rho = \rho \qquad \mathrm{if \ and \ only \ if \ } \qquad \varepsilon(P, \gamma P) = 0 \ \forall \gamma \in \pi_1(S).
\end{align}
Since $\mathfrak{a}'$ is one-dimensional, we can identify $\mathcal{H}\left( \olambda; \mathfrak{a}'\right)$ with $\mathbb{R}$-valued transverse cycles $\mathcal{H}\left( \olambda; \mathbb{R}\right)$.
Restricting to twisted cycles, there is a one-to-one correspondence between $\mathcal{H}^{\mathrm{Twist}}\left( \olambda; \mathfrak{a}'\right)$ and $\mathcal{H}\left( \olambda; \mathbb{R}\right)^-$, the $(-1)$-eigenspace of $\mathcal{H}\left( \olambda; \mathbb{R}\right)$ with respect to the orientation reversing involution $\mathfrak{R}$.
Define 
\begin{align*}
f \colon \mathcal{H}(\widehat{\lambda}; \mathbb{R})^- &\to \mathrm{Hom}(\pi_1(S), \mathbb{R}), \\
\varepsilon &\mapsto \left( u_{\varepsilon} \colon \gamma \mapsto \varepsilon(P, \gamma P) \right).
\end{align*}
A short computation shows that this is a group homomorphism and independent of the reference component.
Together with \eqref{eq::horocyclic_trivial_iff}, we now have that $\Lambda_P^\varepsilon \rho = \rho$ if and only if $\varepsilon$ takes values in $\mathfrak{a}'$ and lies in $\ker(f)=: \mathcal{H}_{\mathrm{trivial}}$. 
Here, we also use the additivity of the cataclysm deformation (Theorem \ref{thm::additivity}) and that for representations into $\SL{2}$, the cataclysm deformation is injective (Corollary \ref{cor::injective_SLn}).
By the dimension formula, the dimension of $\ker(f)$ lies between $\mathrm{dim} \left( \mathcal{H}\left( \olambda; \mathbb{R}\right)^- \right)  = -\chi(\lambda) + n(\lambda)$ and $\mathrm{dim} \left( \mathcal{H}\left( \olambda; \mathbb{R}\right)^- \right) - \mathrm{dim} \left( \mathrm{Hom}(\pi_1(S), \mathbb{R}) \right) =  -\chi(\lambda) + n(\lambda) - 2g$, which finishes the proof.
A more detailed version of the proof can be found in \cite[Section 7.4]{Pfeil_Cataclysms}.
\end{proof}
\begin{remark}
For $(3,1)$-horocyclic representations, cataclysm deformations with cycles in $ \mathcal{H}^{\mathrm{Twist}}(\widehat{\lambda}; \mathfrak{a}')$ agree with \emph{$u$-deformations}, that have been introduced by Barbot in \cite[Section 4.1]{Barbot_3dimAnosov} for $u \in \mathrm{Hom}(\pi_1(S), \mathbb{R})$.
Barbot also gives a precise condition  on $u$ for which the deformation $\rho_u$ is Anosov \cite[Theorem 4.2]{Barbot_3dimAnosov}.
Deformations of this form also appear as \emph{bulging deformations} of convex projective structures in \cite{Goldman_Bulging} and \cite{Wienhard_Zhang_Deforming_CRPS}.
\end{remark}

\begin{cor}
If the lamination $\lambda$ is maximal, then the subspace $\mathcal{H}_{\mathrm{trivial}}$ in Theorem \ref{thm::Htrivial} has dimension $4g(S)-5$.
\end{cor}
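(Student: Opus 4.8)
The plan is to read off $\dim \mathcal{H}_{\mathrm{trivial}}$ from the description of $\mathcal{H}_{\mathrm{trivial}}$ as a kernel obtained inside the proof of Theorem \ref{thm::Htrivial}, combined with the dimension counts available for a maximal lamination. Recall from that proof that $\mathcal{H}_{\mathrm{trivial}} = \ker(f)$, where
\begin{align*}
f \colon \mathcal{H}(\olambda; \mathbb{R})^- \to \mathrm{Hom}(\pi_1(S), \mathbb{R}), \qquad \varepsilon \mapsto \bigl( \gamma \mapsto \varepsilon(P, \gamma P) \bigr)
\end{align*}
is the period homomorphism and $\mathcal{H}(\olambda; \mathbb{R})^-$ is the $(-1)$-eigenspace of $\mathfrak{R}^*$ in $\mathcal{H}(\olambda; \mathbb{R})$. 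Since $f$ is linear, $\dim \mathcal{H}_{\mathrm{trivial}} = \dim \mathcal{H}(\olambda; \mathbb{R})^- - \dim \mathrm{im}(f)$, so it is enough to identify these two quantities.

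For the source, I would use the identities recorded in the proof of Proposition \ref{prop::dim_Htwist}, which give $\dim \mathcal{H}(\olambda; \mathbb{R})^- = -\chi(\lambda) + n(\lambda)$. When $\lambda$ is maximal, $S \setminus \lambda$ consists of $-2\chi(S) = 4g(S) - 4$ ideal triangles, so $\chi(\lambda) = \chi(S) - (4g(S)-4) = 6 - 6g(S)$, and $\lambda$ is connected, hence $n(\lambda) = 1$; rather than invoking these two topological facts separately one may also compare Proposition \ref{prop::dim_Htwist} with Corollary \ref{cor::dim_Htwist_maximal}, which forces $-\chi(\lambda) + n_o(\lambda) = 6g(S)-6$ and $n(\lambda) - n_o(\lambda) = 1$ for maximal $\lambda$, and adding these gives $-\chi(\lambda) + n(\lambda) = 6g(S)-5$. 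Either way $\dim \mathcal{H}(\olambda; \mathbb{R})^- = 6g(S) - 5$, while $\dim \mathrm{Hom}(\pi_1(S), \mathbb{R}) = 2g(S)$.

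The remaining and only substantial point is that $\dim \mathrm{im}(f) = 2g(S)$, i.e.\ that $f$ is surjective; this is precisely where maximality of $\lambda$ is used, and it is what pins $\dim \mathcal{H}_{\mathrm{trivial}}$ to the lower end of the range in Theorem \ref{thm::Htrivial}. I would prove it by producing, for each class in $\mathrm{Hom}(\pi_1(S), \mathbb{R}) = H^1(S; \mathbb{R})$, an element of $\mathcal{H}(\olambda; \mathbb{R})^-$ realizing it as a period: since the complementary regions of a maximal lamination are contractible ideal triangles, every cohomology class is carried by a transverse twisted cocycle supported in a tie neighborhood of $\lambda$ (a neighborhood foliated by arcs transverse to $\lambda$), in the spirit of Bonahon's theory of transverse Hölder cocycles \cite{Bonahon_Shearing}; one then checks that the resulting assignment on oriented transverse arcs is finitely additive, $\lambda$-invariant and odd under $\mathfrak{R}$, hence lies in $\mathcal{H}(\olambda; \mathbb{R})^-$, and that its period homomorphism $\gamma \mapsto \varepsilon(P, \gamma P)$ is the prescribed class --- the latter by evaluating on a transverse arc from $P$ to $\gamma P$ with endpoints on the boundary leaves of the two triangles. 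Granting this,
\begin{align*}
\dim \mathcal{H}_{\mathrm{trivial}} = (6g(S) - 5) - 2g(S) = 4g(S) - 5.
\end{align*}

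I expect the surjectivity of $f$ to be the crux: naively integrating a closed $1$-form over transverse arcs does not produce a transverse cycle unless the form is adapted to the complementary regions, so the construction must be run within Bonahon's framework for maximal laminations, and the bookkeeping with basepoints needed to identify the period of the constructed cycle with the given class --- carried out in \cite[Section 7.4]{Pfeil_Cataclysms} --- is the delicate part. Everything else is the elementary dimension count above.
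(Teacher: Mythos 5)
Your proposal is correct in outline but takes a genuinely different route from the paper, and it leaves the hardest step as a sketch. The paper does not argue surjectivity of the period map $f$ directly. Instead it augments $f$: fixing representatives $P=Q_0,Q_1,\dots,Q_{4g-5}$ (one lift of each of the $4g-4$ ideal triangles) and generators $\alpha_i,\beta_i$ of $\pi_1(S)$, it defines
\begin{align*}
V\colon \mathcal{H}(\olambda;\mathbb{R})^- \to \mathbb{R}^{2g}\times\mathbb{R}^{\mathcal{C}_\lambda},\qquad
\varepsilon \mapsto \bigl((\varepsilon(P,\alpha_i P),\varepsilon(P,\beta_i P))_i,\,(\varepsilon(P,Q_j))_j\bigr),
\end{align*}
shows $V$ is injective by finite additivity of $\varepsilon$ (these $6g-5$ numbers determine $\varepsilon(P,Q)$ for all $Q$, hence $\varepsilon$ entirely), and then observes that source and target both have dimension $6g-5$, so $V$ is a bijection. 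Since $f(\varepsilon)=0$ iff the first $2g$ coordinates of $V(\varepsilon)$ vanish, $\mathcal{H}_{\mathrm{trivial}}=\ker f = V^{-1}(\{0\}^{2g}\times\mathbb{R}^{\mathcal{C}_\lambda})$ has dimension $4g-5$. This buys surjectivity of $f$ for free: no explicit transverse cycle needs to be built. Your route --- prove $f$ surjective by realizing every class in $H^1(S;\mathbb{R})$ by a twisted cycle supported near $\lambda$, then apply rank-nullity --- would also work, and you correctly identify surjectivity as the crux (it is exactly what forces equality at the lower end of the range in Theorem \ref{thm::Htrivial}); but you never actually carry out the construction, and in any case the paper's $V$-map argument is shorter and avoids it entirely. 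Two small points: the claim ``$\lambda$ connected, so $n(\lambda)=1$'' is false in general for maximal laminations (a maximal lamination can contain a simple closed leaf as a separate component), so you are right to fall back on extracting $-\chi(\lambda)+n(\lambda)=6g(S)-5$ from the comparison of Proposition \ref{prop::dim_Htwist} with Corollary \ref{cor::dim_Htwist_maximal}; and when you do prove surjectivity you would also need to check the resulting cycle lies in the $(-1)$-eigenspace of $\mathfrak{R}^*$, which you do flag but which requires some care.
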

\begin{proof}
For a maximal lamination, the complement $S \setminus \lambda$ consists of $4g-4$ connected components that are ideal triangles.
Let $\mathcal{C}'_{\lambda} := \{ Q_0, Q_1, \dots, Q_{4g-5}\}$ be a set of representatives of connected components of $\tilde{S} \setminus \tilde{\lambda}$ such that each component of $S \setminus \lambda$ has a lift contained in $\mathcal{C}'_\lambda$.
Let $P = Q_0$ be the fixed reference component, and let $\mathcal{C}_\lambda := \mathcal{C}'_\lambda \setminus \{ P \}$.
Further, let $\alpha_i, \beta_i$ for $i= 1, \dots, g(S)$ be generators of $\pi_1(S)$.
Define the map
\begin{align*}
V \colon \mathcal{H}(\widehat{\lambda}; \mathbb{R})^- &\to \mathbb{R}^{2g} \times \mathbb{R}^{\mathcal{C}_\lambda}, \\
\varepsilon &\mapsto \left( \left(\varepsilon(P, \alpha_i P), \varepsilon(P, \beta_i P) \right)_{i = 1, \dots, g}, \left( \varepsilon(P,Q) \right)_{Q \in \mathcal{C}_{\lambda}} \right).
\end{align*}
It is straightforward to check that $V$ is a vector space homomorphism.
From additivity of the transverse cycle $\varepsilon$, it follows that $\varepsilon$ is uniquely determined by $V(\varepsilon)$, so $V$ is injective. 
For dimension reasons, $V$ is also surjective, so a bijection.
By construction, $\ker(f)$ agrees with $V^{-1}\left( \{ 0\}^{2g(S)} \times \mathbb{R}^{\mathcal{C}_\lambda} \right)$, which has dimension $4g(S) - 5$.
\end{proof}

\begin{remark}
An analogous construction as in Theorem \ref{thm::Htrivial} works for reducible $\Delta$-Anosov representations into $\SL{2n+1}$ that are obtained from composing a $\Delta_{\Sp{2n}}$-Anosov representation $\rho_0 \colon \pi_1(S) \to \Sp{2n}$ with a reducible embedding $\iota_{2n \to 2n+1} \colon \Sp{2n} \to \SL{2n+1}$. 
With the same arguments as for the case of $(n,k)$-horocyclic representations, it holds that there exists a subspace $\mathcal{H}_{\mathrm{trivial}} \subset \mathcal{H}^{\mathrm{Twist}}(\widehat{\lambda}; \mathfrak{a})$ such that $\Lambda^\varepsilon_P \rho = \rho$ for all $\varepsilon \in \mathcal{H}_{\mathrm{trivial}}$.
In this case, we can split up $\mathfrak{a}$ as ${\left(\iota_{2n \to 2n+1}\right)}_{*}(\mathfrak{a}_{2n}) \oplus \mathfrak{a}' \oplus \mathfrak{a}''$, where $\mathfrak{a}'$ is $1$-dimensional and $\mathcal{H}_{\mathrm{trivial}} \subset \mathcal{H}^{\mathrm{Twist}}(\widehat{\lambda}; \mathfrak{a}')$ identifies with $\mathcal{H}\left( \olambda; \mathbb{R}\right)^-$. 
Since we cannot determine the behavior of $\Lambda_P$ for cycles with values in $\mathfrak{a}''$, we only obtain a  lower bound on the dimension of $\mathcal{H}_{\mathrm{trivial}}$ in contrast to Theorem  \ref{thm::Htrivial}.
\end{remark}

\begin{remark}
Horocyclic representations are reducible. 
However, we can also construct irreducible representations for which injectivity of the cataclysm deformation fails.
An example are \emph{hybrid representations}.
These were constructed for representations into $\Sp{4}$ in \cite[§3.3.1]{GuichardWienhard_TopologicalInvariants}.
We use the same technique for $\SL{3}$ to obtain an irreducible representation whose restriction to a subsurface is reducible:
Let $S$ be a closed connected oriented surface of genus at least $2$ and let $c$ be a simple closed separating curve.
Let $S_l$ and $S_r$ be the connected components of $S \setminus c$.
Then the fundamental group of $S$ is the amalgamated product $\pi_1(S)= \pi_1(S_l) *_{c} \pi_1(S_r)$. 
Let $\rho_0 \colon \pi_1(S) \to \SL{2}$ be discrete and faithful.
We can assume that $\rho_0(c)$ is diagonal with eigenvalues $e^{a}$ and $e^{-a}$.
For $t \in [0,1]$ let $\rho_t \colon \pi_1(S) \to \SL{2}$ be a continuous path of representations starting at $\rho_0$ such that $\rho_1(c)$ is diagonal with entries $e^{2a}$ and $e^{-2a}$. 
Set $\rho_{l} := \iota_{3,1} \circ \rho_1$, where $\iota_{3,1} \colon \SL{2} \to \SL{3}$ is the reducible representation introduced in Example \ref{ex::horocyclic}.
Further, let $\rho_r := j_3 \circ \rho_0$, where $j_3 \colon \SL{2} \to \SL{3}$ is the unique irreducible representation. 
Then $\rho_l(c) = \rho_r(c)$,
so we can define $\rho := \rho_l|_{\pi_1(S_l)} *_{c} \rho_r|_{\pi_1(S_r)}$.
Since $\rho|_{\pi_1(S_r)}$ is irreducible also $\rho$ is irreducible. 
Let $\lambda$ be a finite lamination that is subordinate to a pair of pants decomposition containing the separating curve $c$.
In  \cite[Subsection 7.3]{Pfeil_Cataclysms}, we construct a transverse cycle $\varepsilon$ that it is trivial on all arcs that are contained in $S_r$, i.e.\ on the part of the surface where $\rho$ is irreducible, and 
such that $\varphi^\varepsilon_{P (\gamma P)} = \Id$ for all $\gamma \in \pi_1(S)$.
In particular, the $\varepsilon$-cataclysm deformation along $\lambda$ is trivial. 
\end{remark}

\subsection{Injectivity of shearing maps and a sufficient condition for injectivity}

Even though the cataclysm deformation is not injective in general, we can give a sufficient condition for injectivity. 
To do so, we first observe that the map that assigns to a twisted cycle the family of shearing maps, is injective.

\begin{prop}
\label{prop::shearing_injective}
If two transverse twisted cycles $\varepsilon, \eta \in \Htwist$ have the same family of shearing maps, i.e. $\varphi^\varepsilon_{PQ} = \varphi^\eta_{PQ}$ for all $P,Q \subset \tilde{S} \setminus \tilde{\lambda}$, then $\varepsilon = \eta$.
In other words, the map 
\begin{align*}
\mathcal{V}_\rho &\to G^{ \{(P,Q) | P,Q \subset \tilde{S} \setminus \tilde{\lambda} \} } \\
\varepsilon &\mapsto \{\varphi^\varepsilon_{PQ} \}_{(P,Q)}
\end{align*}
is injective. 
Here, $\mathcal{V}_\rho \subset \Htwist$ is the neighborhood of $0$ from Proposition \ref{prop::shearing_map}.
\end{prop}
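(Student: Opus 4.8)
The plan is to show that the family $\{\varphi^\varepsilon_{PQ}\}_{(P,Q)}$ determines the value $\varepsilon(P,Q)\in\atheta$ for every ordered pair of components $P,Q\subset\tilde S\setminus\tilde\lambda$. Since a transverse twisted cycle is determined by these values — by finite additivity and invariance, as the value of $\varepsilon$ on any tightly transverse arc in $\tilde S$ equals $\varepsilon(P,Q)$ for the components $P,Q$ containing its endpoints — this yields $\varepsilon=\eta$. So I would suppose $\varphi^\varepsilon_{PQ}=\varphi^\eta_{PQ}$ for all pairs, fix $P,Q$ together with a tightly transverse arc $\tilde k$ from $P$ to $Q$, write $\delta(\cdot,\cdot):=\varepsilon(\cdot,\cdot)-\eta(\cdot,\cdot)$ for the finitely additive difference along $\tilde k$, and aim for $\delta(P,Q)=0$. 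The model case is that of \emph{adjacent} $P,Q$, separated by a single leaf $g$: there $\mathcal C_{PQ}=\emptyset$ and $\varphi^\varepsilon_{PQ}=T^{\varepsilon(P,Q)}_g=m_g\exp(\varepsilon(P,Q))m_g^{-1}$, so equality of shearing maps forces $\exp(\varepsilon(P,Q))=\exp(\eta(P,Q))$, hence $\varepsilon(P,Q)=\eta(P,Q)$ because $\exp$ restricted to $\mathfrak a$ — and hence to $\atheta$ — is an injective, indeed proper, homeomorphism onto the closed subgroup $\exp(\mathfrak a)\subset G$ (a standard consequence of the Cartan decomposition of a semisimple group).

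For a general pair $P,Q$ no two components need be adjacent (already if $\lambda$ contains a minimal sublamination there are no adjacent pairs at all, yet $\Htwist\neq 0$), so the above has to be propagated along $\tilde k$ by a limiting argument built on the divergence-radius estimates. Concretely, for $n\in\mathbb N$ let $\mathcal C_n\subset\mathcal C_{PQ}$ consist of the components between $P$ and $Q$ of divergence radius $\le n$; by Lemma~\ref{lem::divergence_radius} this is finite with $|\mathcal C_n|=O(n)$, and $\mathcal C_n$ exhausts $\mathcal C_{PQ}$. Enumerate $\{P\}\cup\mathcal C_n\cup\{Q\}$ as $P=R_0,R_1,\dots,R_{m_n},R_{m_n+1}=Q$ in the order in which $\tilde k$ meets them; then every component of $\tilde\lambda$ strictly between consecutive $R_i,R_{i+1}$ has divergence radius $>n$, so Lemma~\ref{lem::psi_P_estimate} gives $\dG(\psi^\varepsilon_{R_iR_{i+1}},\Id)\le Ce^{-A'n}$ and likewise for $\eta$, with constants uniform over the sub-arcs of the fixed compact arc $\tilde k$. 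Writing $\varphi^\bullet_{R_iR_{i+1}}=\psi^\bullet_{R_iR_{i+1}}\,T^{\bullet(R_i,R_{i+1})}_{g^0_{R_{i+1}}}$ for $\bullet\in\{\varepsilon,\eta\}$, the hypothesis $\varphi^\varepsilon_{R_iR_{i+1}}=\varphi^\eta_{R_iR_{i+1}}$ and Lemma~\ref{lem::stretch_properties}(1)--(2) yield $T^{\delta(R_i,R_{i+1})}_{g^0_{R_{i+1}}}=(\psi^\varepsilon_{R_iR_{i+1}})^{-1}\psi^\eta_{R_iR_{i+1}}$, whose $\dG$-distance to $\Id$ is $\le 2Ce^{-A'n}$ by left-invariance. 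Since the leaves $g^0_{R_{i+1}}$ all meet $\tilde k$ and so range over a compact family, one may choose the $m_{g^0_{R_{i+1}}}$ with uniformly bounded $\norm{\mathrm{Ad}_{m^{\pm1}}}$, and then properness of $\exp|_{\atheta}$ upgrades this to $\norm{\delta(R_i,R_{i+1})}\le C'e^{-A'n}$ in a fixed norm on $\atheta$, uniformly in $i$. Finite additivity then gives $\delta(P,Q)=\sum_{i=0}^{m_n}\delta(R_i,R_{i+1})$, so $\norm{\delta(P,Q)}\le(m_n+1)C'e^{-A'n}=O(n)\,e^{-A'n}\to 0$, forcing $\delta(P,Q)=0$.

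The step I expect to be the main obstacle is precisely this passage from adjacent to arbitrary pairs: it is genuinely necessary, and it is what forces the divergence-radius bookkeeping, but it should require no technology beyond Lemmas~\ref{lem::divergence_radius} and~\ref{lem::psi_P_estimate}. The only points needing care are the uniformity of the constants over sub-arcs of $\tilde k$ (and, via Proposition~\ref{prop::varphi_PQ_final}, over a finite family of reference arcs if one wants a bound uniform in $(P,Q)$) and the uniform choice of the $m_g$'s — both of the same nature as the operator-norm estimates already carried out in the proof of Proposition~\ref{prop::Close_stretches}.
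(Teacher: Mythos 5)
Your argument is correct, but it takes a genuinely different route from the paper's. The paper constructs an explicit ``inverse'' of the shearing map $\varepsilon\mapsto\{\varphi^\varepsilon_{PQ}\}$ via the Busemann cocycle $\sigma_\theta\colon G\times\mathcal F_\theta\to\atheta$: it introduces the functional
\begin{align*}
\delta(\tilde k):=\sum_{R\in\mathcal C_{PQ}}\Bigl(\Buse{\varphi^\varepsilon_R}{P_{g_R^0}^+}-\Buse{\varphi^\varepsilon_R}{P_{g_R^1}^+}\Bigr)-\Buse{\Id}{P_{g_P^1}^+}+\Buse{\varphi^\varepsilon_Q}{P_{g_Q^0}^+},
\end{align*}
shows this series converges absolutely using the same divergence-radius bookkeeping you invoke, and then proves $\delta(\tilde k)=\varepsilon(P,Q)$ via the cocycle identity, the observation $\Buse{T^H_g}{P_g^+}=H$, and Lemma~\ref{lem::psi_P_estimate}; since $\delta$ depends only on the shearing maps and the flag curve, injectivity follows. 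You instead work directly with the factorization $\varphi^\varepsilon_{R_iR_{i+1}}=\psi^\varepsilon_{R_iR_{i+1}}T^{\varepsilon(R_i,R_{i+1})}_{g_{R_{i+1}}^0}$: equality of shearing maps plus the $\psi$-estimate force $T^{\delta(R_i,R_{i+1})}_{g_{R_{i+1}}^0}$ to be exponentially close to $\Id$, and properness of $\exp|_{\mathfrak a}$ (together with a uniform choice of the $m_g$) converts that into $\norm{\delta(R_i,R_{i+1})}_{\atheta}=O(e^{-A'n})$; finite additivity then gives $\norm{\delta(P,Q)}_{\atheta}=O(n\,e^{-A'n})\to 0$. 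Both arguments rest on exactly the same estimates (Lemma~\ref{lem::divergence_radius}, Lemma~\ref{lem::psi_P_estimate}, Hölder continuity of $\zeta$), but yours is more elementary in that it bypasses the Busemann cocycle entirely; what the paper's approach buys in return is a closed-form expression recovering $\varepsilon(P,Q)$ from the family $\{\varphi^\varepsilon_{PQ}\}$ and $\zeta$, which makes the dependence transparent and is potentially reusable. The two points you flag as needing care — uniformity of the constants over sub-arcs of $\tilde k$, and a uniform bounded choice of $m_g$ for the compact family of leaves crossing $\tilde k$ — are real but resolvable by exactly the compactness arguments already used in Proposition~\ref{prop::Close_stretches}; in particular, since the length $\ell(\tilde k\cap R)$ (which is what ultimately controls the estimates) is the same whether measured along $\tilde k$ or along a sub-arc, the divergence-radius bounds carry over.
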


For the proof of Proposition \ref{prop::shearing_injective}, we make use of the \emph{Busemann cocycle} $\sigma \colon G \times \mathcal{F}_\theta \to \atheta$ (see  \cite[Lemma 6.29]{BenoistQuint_RandomWalks}).
It is defined as follows: 
Let $G = KAN^+$ be the Iwasawa decomposition. 
The maximal compact subgroup $K$ acts transitively on $\mathcal{F}_\Delta = G/B^+$. 
Thus, every element in $\mathcal{F}_\Delta$ can be written as $k\cdot B^+$ for some $k \in K$.
By the Iwasawa decomposition, for $g \in G$ and $F = k \cdot B^+ \in \mathcal{F}_\Delta$, there exists a unique element $\sigma(g, F)$ in $\mathfrak{a}$ such that 
\begin{align*}
gk \in K \exp(\sigma(g, F)) N^+.
\end{align*} 
The Busemann cocycle also exists in the context of partial flag manifolds: 
Let $W_\theta$ be the subgroup of the Weyl group $W$ that fixes $\atheta$ point-wise and let $p_\theta \colon \mathfrak{a} \to \atheta$ be the unique projection invariant under $W_\theta$.
For every $\theta \subset \Delta$, the map $p_\theta \circ \sigma \colon G \times \mathcal{F}_\Delta \to \atheta$ factors through a map $\sigma_{\theta} \colon G \times \mathcal{F}_\theta \to \atheta$ (by \cite[Lemma 8.21]{BenoistQuint_RandomWalks}).
The Busemann cocycle is continuous and satisfies the \emph{cocycle property}, i.e.\ for all $g_1, g_2 \in G$ and $F \in \mathcal{F}_\theta$, 
\begin{align*}
\Buse{g_1g_2}{F} = \Buse{g_1}{g_2 \cdot F} + \Buse{g_2}{F}. 
\end{align*}
By definition, the Busemann cocycle satisfies
\begin{align*}
\Buse{\mathrm{id}}{F} = 0 \ \forall F \in \mathcal{F}_\Delta \qquad \mathrm{and} \qquad \Buse{\exp(H)}{P_\theta^+} = H \  \forall H \in \atheta.
\end{align*}
\begin{proof}[Proof of Proposition \ref{prop::shearing_injective}]
We use the Busemann cocycle $\sigma_\theta$ and define for an arc $\tilde{k}$ transverse to $\tilde{\lambda}$
\begin{align*}
\delta(\tilde{k}) & := \sum_{R \in \mathcal{C}_{PQ}} \bigg(\Buse{\varphi^\varepsilon_R}{P_{g_R^0}^+}- \Buse{\varphi^\varepsilon_R}{P_{g_R^1}^+}\bigg)
- \Buse{\Id}{P_{g_P^1}^+} + \Buse{\varphi^\varepsilon_Q}{P_{g_Q^0}^+},
\end{align*}
where for a component $R \subset \tilde{S} \setminus \tilde{\lambda}$, we set $\varphi^\varepsilon_{R} := \varphi^\varepsilon_{PR}$.
We first show that the sum defining $\delta$ converges.

In the following, we omit the superscript $\varepsilon$.
Fix a norm $\norm{\cdot}$ on $\mathfrak{a}$.
The Busemann cocycle is analytic, so in particular, it is locally Lipschitz.
Since the arc $\tilde{k}$ is compact, all the shearing maps $\varphi_R$ lie within a compact subset of $G$, and the flags $P_{g_R^{0/1}}^+$ lie within a compact subset of $\mathcal{F}_\theta$.
Thus, using H\"older continuity of the boundary map, Lemma \ref{lem::divergence_radius} and the fact that $\mathrm{d}(g_R^0, g_R^1) \leq B \ell(\tilde{k} \cap R)$ for some constant $B>0$, there exist constants $C_i, A_i > 0$ depending on the arc $\tilde{k}$ and the representation $\rho$ such that
 \begin{align*}
 \norm{\Buse{\varphi_R}{P_{g_R^0}^+}- \Buse{\varphi_R}{P_{g_R^1}^+} } &\leq C_1 \  \mathrm{d}_{\mathcal{F}_\theta}\left(P_{g_R^0}^+, P_{g_R^1}^+ \right)  \\
 &\leq C_2 \  \mathrm{d}\left( g_R^0, g_R^1 \right)^{A_1} \\
& \leq C_3 e^{-A_2 r(R)}.
 \end{align*}
It follows that
\begin{small}
\begin{align*}
\sum_{R \in \mathcal{C}_{PQ}} \norm{\Buse{\varphi_R}{P_{g_R^0}^+}- \Buse{\varphi_R}{P_{g_R^1}^+}} 
\leq C_3 \sum_{R \in \mathcal{C}_{PQ}} e^{-A_2r(R)}
 \leq C_4 \sum_{r=0}^{\infty}e^{-A_2 r} 
 < \infty,
\end{align*}
\end{small}
where for the last inequality, we use the fact that the number of all components with fixed divergence radius is uniformly bounded (Lemma \ref{lem::divergence_radius}).
In total, the sum defining $\delta$ is absolutely convergent.

We now show that $\delta(\tilde{k}) = \varepsilon(P,Q)$.
First, we observe that if $g$ is an oriented geodesic in $\tilde{S}$ and $H \in \atheta$, then 
\begin{align}
\label{eq::Busemann_basic}
\Buse{T^H_g}{P_g^+} = H.
\end{align}
This follows from the definition of $T_g^H$ and the cocycle property.
As in the proof of Proposition \ref{prop::shearing_map}, we fix a sequence $\left(\mathcal{C}_m \right)_{m \in \mathbb{N}}$ of subsets of $\mathcal{C}_{PQ}$ such that $\mathcal{C}_m$ has cardinality $m$ and $\mathcal{C}_m \subset \mathcal{C}_{m+1}$ for all $m$.
For a fixed $m$, let $\mathcal{C}_m= \{R_1, R_2, \dots, R_m\}$, where the labelling is from $P$ to $Q$. 
Set $R_0 := P$ and $R_{m+1} := Q$.
Note that, to be precise, we would have to record the subset $C_m$ in the notation of the $R_k$, i.e.\ $\mathcal{C}_m = \{ R^m_1, \dots R^m_m\}$. 
For instance, $R^m_k$ is either equal to  $R^{m+1}_k$ or to $R^{m+1}_{k+1}$, depending on whether the unique component $R \in \mathcal{C}_{m+1}\setminus \mathcal{C}_m$ separates $R^m_k$ from $P$ or from $Q$.
However, we feel that this would overload the notation even more, so we omit the superscript, keeping in mind that the components $R_i$ depend on the on the subset $\mathcal{C}_m$. 
To shorten notation, let  $\varphi_i := \varphi_{R_i}$ and $g_i^{0/1} := g_{R_i}^{0/1}$.
Reordering the sum defining $\delta$, we have 
\begin{align*}
\delta(\tilde{k}) &= \lim_{m \to \infty}\sum_{R_i \in \mathcal{C}_m} \bigg(\Buse{\varphi_i}{P_{g_i^0}^+}- \Buse{\varphi_i}{P_{g_i^1}^+}\bigg)
- \Buse{\Id}{P_{g_0^1}^+} + \Buse{\varphi_{m+1}}{P_{g_{m+1}^0}^+}\\
&= \lim_{m \to \infty} \sum_{i=0}^{m} \bigg(\Buse{\varphi_{i+1}}{P_{g_{i+1}^0}^+}- \Buse{\varphi_i}{P_{g_i^1}^+}\bigg).
\end{align*}
We can write the shearing map as a composition $\varphi_{i+1} = \varphi_i \psi_{i} T^{\varepsilon(R_i, R_{i+1})}_{g_{i+1}^0}$, where $\psi_{i} := \psi_{R_i R_{i+1}}$. 
Using the cocycle property together with the fact that  $T^{\varepsilon(R_i, R_{i+1})}_{g_{i+1}^0}$ stabilizes $P_{g_{i+1}^0}^+$, and the observation in \eqref{eq::Busemann_basic}, we obtain
\begin{align*}
\Buse{\varphi_{i+1}}{P_{g_{i+1}^0}^+} 
&= \Buse{\varphi_i   \psi_{i}} {P_{g_{i+1}^0}^+} + \varepsilon(R_i, R_{i+1}).
\end{align*}
Thus, by additivity of $\varepsilon$ and the cocycle property, 
\begin{align*}
\delta(\tilde{k}) &= \lim_{m \to \infty} \sum_{i=0}^{m} \bigg(  \Buse{\varphi_i   \psi_{i}} {P_{g_{i+1}^0}^+} +  \varepsilon(R_i, R_{i+1}) - \Buse{\varphi_i}{P_{g_i^1}^+}  \bigg) \\
&= \varepsilon(P,Q) +  \lim_{m \to \infty}  \sum_{i=0}^{m} \bigg(  \Buse{\varphi_i}{\psi_{i}   P_{g_{i+1}^0}^+} - \Buse{\varphi_i}{P_{g_i^1}^+} + \Buse{\psi_{i}} {P_{g_{i+1}^0}^+}   \bigg)
\end{align*}
It remains to show that the limit on the right side equals zero.
Fix $m \in \mathbb{N}$.
By local Lipschitz continuity of the Busemann cocycle, there exists a constant $C_1 >0$ depending on $\tilde{k}$ and $\rho$ such that
\begin{align*}
\norm{\Buse{\varphi_i}{\psi_{i}  P_{g_{i+1}^0}^+} - \Buse{\varphi_i}{P_{g_i^1}^+}}
&\quad \leq  C_1 \ \mathrm{d}_{\mathcal{F}_\theta}\left( \psi_i  P_{g_{i+1}^0}^+, P_{g_i^1}^+\right) \\
&\quad \leq C_1 \left( \mathrm{d}_{\mathcal{F}_\theta}\left( \psi_i  P_{g_{i+1}^0}^+, P_{g_{i+1}^0}^+\right) + \mathrm{d}_{\mathcal{F}_\theta}\left(P_{g_{i+1}^0}^+, P_{g_i^1}^+\right) \right).
\end{align*}
Let $r_m := \min_{R \in \mathcal{C}_{PQ} \setminus \mathcal{C}_m} r(R)$ be the minimal divergence radius of all components of $\mathcal{C}_{PQ}$ that are not contained in $\mathcal{C}_m$.
It goes to infinity as $m$ tends to $\infty$, since for fixed  $n$, there are only finitely many components in $\mathcal{C}_{PQ}$ with $r(R) = n$.
By H\"older continuity of the boundary map, there are constants $A_j, C_j >0$ depending on $\tilde{k}$ and $\rho$ such that
\begin{align*}
\mathrm{d}_{\mathcal{F}_\theta}\left(P_{g_{i+1}^0}^+, P_{g_i^1}^+\right)  &\leq C_2\mathrm{d}\left(g_{i+1}^0, g_i^1\right)^{A_1} \\
&\leq  C_3 \left(\sum_{R \in \mathcal{C}_{g^0_{i+1} g_i^1}} e^{-A_2 r(R)} \right)^{A_1} \\
& \leq C_4 e^{-A_3 r_m},
\end{align*}
where for the last step, we used that the series can be estimated by the remainder term of a geometric series and is bounded by a constant times $e^{-A_3 r_m}$.

Further, since the action of $G$ on $\mathcal{F}_\theta$ is smooth, it is in particular locally Lipschitz and we have by Lemma \ref{lem::psi_P_estimate}
\begin{align*}
\mathrm{d}_{\mathcal{F}_\theta}\left( \psi_i   P_{g_{i+1}^0}^+ , P_{g_{i+1}^0}^+\right) \leq C_6 \dG( \psi_i, \Id) \leq C_7 e^{-A_4r_m}.
\end{align*}
Combining these estimates gives us 
\begin{align}
\label{eq::delta_equals_varepsilon_1}
\norm{\Buse{\varphi_i}{\psi_{i}   P_{g_{i+1}^0}^+} - \Buse{\varphi_i}{P_{g_i^1}^+}} \leq C_8 e^{-A_5r_m}.
\end{align}
In addition, again by Lemma \ref{lem::psi_P_estimate}, we have
\begin{align}
\label{eq::delta_equals_varepsilon_2}
\norm{ \Buse{\psi_{i}} {P_{g_{i+1}^0}^+}} 
&= \norm{\Buse{\psi_{i}} {P_{g_{i+1}^0}^+} - \Buse{\Id} {P_{g_{i+1}^0}^+}}   \\
&\leq C_9 \dG(\psi_{i},\Id) \nonumber \\
&\leq C_{10} e^{-A_6r_m}.  \nonumber
\end{align}
Combining the estimates from \eqref{eq::delta_equals_varepsilon_1} and \eqref{eq::delta_equals_varepsilon_2}, we have
\begin{align*}
 \sum_{i=0}^{m} \norm{\Buse{\varphi_i   \psi_{i}} {P_{g_{i+1}^0}^+} - \Buse{\varphi_i}{P_{g_i^1}^+}} 
\leq \sum_{i=0}^{m} C_{11}e^{-A_7 r_m}
\leq C (r_m+1) e^{-A r_m},
\end{align*}
where we use the fact that $m = |\mathcal{C}_m|$ is bounded by a constant times $r_m+1$ (Lemma \ref{lem::divergence_radius}). 
If $m$ goes to infinity, $r_m$ goes to infinity, so the right hand side converges to zero. 
This finishes the proof that  $\delta(\tilde{k}) = \varepsilon(P,Q)$.
Since $\delta$ depends on the family of shearing maps, and only indirectly on $\varepsilon$, it follows that $\varepsilon = \eta$ if they have the same families of shearing maps.
\end{proof}
\begin{remark}
In \cite{Dreyer_Cataclysms}, Dreyer shows Proposition \ref{prop::shearing_injective} for the case of $\Delta$-Anosov representations into $\mathrm{PSL}(n, \mathbb{R})$.
He does not use the Busemann cocycle, but the viewpoint on Anosov representations through a bundle over $T^1 S$ and considers a flow on this bundle. 
Our approach gives the same results in the special case of $\Delta$-Anosov representations into $\mathrm{PSL}(n, \mathbb{R})$, but works in the more general setting.
Further, Dreyer concludes from this result injectivity of the cataclysm deformation, which is wrong as we have seen in Theorem \ref{thm::Htrivial} above.
\end{remark}

With Proposition \ref{prop::shearing_injective} and an assumption on the flag curve of $\rho$, we can guarantee injectivity of the cataclysm deformation.
\begin{cor}
\label{cor::stab_trivial_injective}
Let $\rho \colon \pi_1(S) \to G$ be $\theta$-Anosov with flag curve $\zeta \colon \partial_\infty \tilde{S}$ such that for every connected component $Q \subset \tilde{S} \setminus \tilde{\lambda}$, 
\begin{align*}
\bigcap_{x \in \partial Q} \mathrm{Stab}_G \zeta(x) = \{ Id \}.
\end{align*}
Then for any fixed reference triangle $P$, the cataclysm deformation based at $\rho$ is injective.
\end{cor}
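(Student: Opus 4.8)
The plan is to deduce the corollary from the injectivity of the shearing-map assignment (Proposition \ref{prop::shearing_injective}). Suppose $\varepsilon, \eta \in \mathcal{U}_\rho$ are chosen small enough that $\rho' := \Lambda_P^\varepsilon \rho$ and $\rho'' := \Lambda_P^\eta \rho$ are both $\theta$-Anosov, and assume $\rho' = \rho''$ as homomorphisms. I want to show that $\varepsilon$ and $\eta$ have the same family of shearing maps, whence $\varepsilon = \eta$ by Proposition \ref{prop::shearing_injective}.

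The first step is to use uniqueness of the boundary map. A $\theta$-Anosov representation admits a unique boundary map, so $\rho' = \rho''$ forces the associated boundary maps to coincide. By Theorem \ref{thm::boundary_map}, for every $x \in \partial_\infty \tilde{\lambda}$ that is a vertex of a component $Q_x \subset \tilde{S} \setminus \tilde{\lambda}$ we have $\varphi^\varepsilon_{P Q_x}\cdot \zeta(x) = \varphi^\eta_{P Q_x}\cdot \zeta(x)$. Fixing a component $Q$ and setting $g_Q := (\varphi^\eta_{PQ})^{-1}\varphi^\varepsilon_{PQ}$, this says that $g_Q$ fixes $\zeta(x)$ for every vertex $x$ of $Q$, i.e. $g_Q \in \bigcap_{x \in \partial Q} \mathrm{Stab}_G \zeta(x)$. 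The hypothesis of the corollary makes this intersection trivial, so $\varphi^\varepsilon_{PQ} = \varphi^\eta_{PQ}$ for every component $Q$.

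The second step is to pass from the shearing maps based at the reference component $P$ to the entire family. By the composition property in Proposition \ref{prop::shearing_composition}, $\varphi^\varepsilon_{QR} = (\varphi^\varepsilon_{PQ})^{-1}\varphi^\varepsilon_{PR}$ for any components $Q, R$, and likewise for $\eta$; hence $\varphi^\varepsilon_{QR} = \varphi^\eta_{QR}$ for all pairs $(Q,R)$. Thus $\varepsilon$ and $\eta$ determine the same family of shearing maps, and Proposition \ref{prop::shearing_injective} yields $\varepsilon = \eta$.

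Every step reduces to results already in hand, so I do not anticipate a serious obstacle. The points deserving a line of care are: the appeal to uniqueness of the boundary map (which is what forces the two formulas of Theorem \ref{thm::boundary_map} to agree once $\rho' = \rho''$); the remark that controlling the shearing maps with source $P$ suffices, via the cocycle identity for the $\varphi^\bullet_{\bullet\bullet}$; and the bookkeeping of neighborhoods, namely that $\varepsilon$ and $\eta$ should be taken in a neighborhood of $0$ on which both the shearing maps (Propositions \ref{prop::shearing_map} and \ref{prop::varphi_PQ_final}) and the $\theta$-Anosov property of the deformed representations (Theorem \ref{thm::cataclysm_deformation_hom}) are available, so that the boundary-map argument applies.
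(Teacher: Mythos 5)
Your argument is correct and follows the paper's proof essentially verbatim: equate boundary maps via Theorem \ref{thm::boundary_map}, use the triviality hypothesis on $\bigcap_{x\in\partial Q}\mathrm{Stab}_G\zeta(x)$ to force $\varphi^\varepsilon_{PQ}=\varphi^\eta_{PQ}$ for all $Q$, extend to all pairs via the cocycle identity of Proposition \ref{prop::shearing_composition}, and conclude from Proposition \ref{prop::shearing_injective}. One small note in your favor: your element $g_Q=(\varphi^\eta_{PQ})^{-1}\varphi^\varepsilon_{PQ}$ is the one that actually stabilizes $\zeta(x)$, whereas the paper writes the product in the opposite order, which is a minor slip there.
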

\begin{proof}
Let $\varepsilon, \eta \in \mathcal{U}_\rho \subset \Htwist$ be such that $\Lambda_P^\varepsilon \rho = \Lambda_P^\eta \rho$ and let $\zeta_\varepsilon = \zeta_\eta$ be the corresponding boundary curve. 
By Theorem \ref{thm::boundary_map}, we have for every $x$ that is a vertex of the connected component $Q_x \subset \tilde{S} \setminus \tilde{\lambda}$,
\begin{align*}
\varphi_{P Q_x}^\varepsilon \cdot \zeta(x) = \zeta_\varepsilon(x) =\zeta_\eta(x) = \varphi_{P Q_x}^\eta \cdot \zeta(x),
\end{align*}
so $\varphi_{P Q_x}^\varepsilon \left( \varphi_{P Q_x}^\eta \right)^{-1}$ lies in the stabilizer of $\zeta(x)$. 
This holds for any $x$ in the boundary of $Q_x$, so by the assumption on $\zeta$, we have $\varphi_{P Q_x}^\varepsilon = \varphi_{P Q_x}^\eta$ for all connected components $Q_x$ of $\tilde{S} \setminus \tilde{\lambda}$.
By the composition property of the shearing maps, it follows that $\varphi_{PQ}^\varepsilon = \varphi_{PQ}^\eta$ for all connected components $P,Q \subset \tilde{S} \setminus \tilde{\lambda}$.
By Proposition \ref{prop::shearing_injective}, it follows that $\varepsilon = \eta$, so the cataclysm deformation is injective.
\end{proof}
\begin{cor}
\label{cor::injective_SLn}
If $\rho$ is a Hitchin representation into $\mathrm{PSL}(n, \mathbb{R})$ or into $\SL{n}$, then the cataclysm deformation based at $\rho$ is injective.
\end{cor}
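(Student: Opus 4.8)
The plan is to reduce to Corollary \ref{cor::stab_trivial_injective}, applied first for $G = \mathrm{PSL}(n,\mathbb{R})$ and then transported to $\SL{n}$ via Proposition \ref{prop::cataclysm_equivariant}. Recall that a Hitchin representation is $\Delta$-Anosov, so here $\theta = \Delta$, $\atheta = \mathfrak{a}$, the flag manifold $\mathcal{F}_\Delta$ is the manifold of complete flags of $\mathbb{R}^n$, and $\mathrm{Stab}_G \zeta(x)$ is a Borel subgroup. Note that $\mathcal{F}_\Delta$ and the limit curve $\zeta$ of $\rho$ are literally the same for $\SL{n}$ and for its quotient $\mathrm{PSL}(n,\mathbb{R})$, so the only genuine difference between the two cases is the group in which one stabilises.

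First I would verify, for $\rho$ Hitchin into $\mathrm{PSL}(n,\mathbb{R})$, the hypothesis of Corollary \ref{cor::stab_trivial_injective}: for every component $Q \subset \tilde{S}\setminus\tilde{\lambda}$ one has $\bigcap_{x \in \partial Q} \mathrm{Stab}_{\mathrm{PSL}(n,\mathbb{R})}\zeta(x) = \{\mathrm{Id}\}$. Step one is the classical fact that every complementary region of a geodesic lamination on a closed hyperbolic surface has at least three distinct ideal vertices (it is an ideal polygon with at least three sides, or a subsurface with geodesic boundary whose lift to $\tilde{S}$ is bounded by infinitely many leaves; see \cite{Bonahon_Laminations}); pick three distinct vertices $x_1, x_2, x_3$ of $Q$. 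Step two is the hyperconvexity of the Hitchin limit curve (Labourie \cite{Labourie}): for distinct points and any $n_1 + n_2 + n_3 = n$ with $n_i \geq 1$ one has $\zeta(x_1)^{(n_1)} \oplus \zeta(x_2)^{(n_2)} \oplus \zeta(x_3)^{(n_3)} = \mathbb{R}^n$, so in particular $\zeta(x_1), \zeta(x_2), \zeta(x_3)$ are three complete flags in general position. Step three is linear algebra: in a basis $e_1, \dots, e_n$ adapted to the transverse pair $(\zeta(x_1), \zeta(x_2))$, the common stabiliser of these two flags in $\mathrm{PGL}(n,\mathbb{R})$ is the diagonal torus, and applying the direct-sum identity with $(n_1, n_2, n_3) = (k-1, n-k, 1)$ for $k = 1, \dots, n$ shows that $\zeta(x_3)^{(1)}$ is spanned by a vector all of whose coordinates are nonzero, so any diagonal matrix fixing it is scalar, i.e.\ trivial in $\mathrm{PGL}(n,\mathbb{R})$. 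Hence $\bigcap_{i=1}^3 \mathrm{Stab}_{\mathrm{PSL}(n,\mathbb{R})}\zeta(x_i) = \{\mathrm{Id}\}$, a fortiori the intersection over all of $\partial Q$ is trivial, and Corollary \ref{cor::stab_trivial_injective} yields injectivity in the $\mathrm{PSL}(n,\mathbb{R})$ case.

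For $\rho$ Hitchin into $\SL{n}$, compose with the quotient homomorphism $\kappa \colon \SL{n} \to \mathrm{PSL}(n,\mathbb{R})$; then $\kappa \circ \rho$ is Hitchin into $\mathrm{PSL}(n,\mathbb{R})$ and $\kappa$ satisfies all hypotheses of Proposition \ref{prop::cataclysm_equivariant} with $\theta = \theta' = \Delta$ (one has $\kappa(K) \subset K'$, $\kappa_* = \mathrm{id}_{\mathfrak{sl}(n,\mathbb{R})}$, the subgroup $W'_\Delta$ fixing $\mathfrak{a}'$ pointwise is trivial so that \eqref{eq::Anosov_composition_assumption} holds with $w' = \mathrm{Id}$, and $\kappa_*(\mathfrak{a}_\Delta) = \mathfrak{a}'_\Delta$). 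Consequently $\Lambda_P^{\kappa_* \varepsilon}(\kappa \circ \rho) = \kappa(\Lambda_P^\varepsilon \rho)$ for all sufficiently small $\varepsilon$, and $\kappa_*$ induces an isomorphism on the spaces of twisted transverse cycles. If $\Lambda_P^\varepsilon \rho = \Lambda_P^\eta \rho$, applying $\kappa$ gives $\Lambda_P^{\kappa_* \varepsilon}(\kappa \circ \rho) = \Lambda_P^{\kappa_* \eta}(\kappa \circ \rho)$, hence $\kappa_* \varepsilon = \kappa_* \eta$ by the previous paragraph, and therefore $\varepsilon = \eta$.

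The main obstacle is really step two: extracting from the positivity/hyperconvexity of the Hitchin limit curve the precise statement needed, namely that a triple of complete flags in general position has common stabiliser equal to the centre of $\mathrm{GL}(n,\mathbb{R})$. The reduction to $\mathrm{PSL}(n,\mathbb{R})$ is forced because for $n$ even the scalar $-\mathrm{Id} \in \SL{n}$ fixes every flag, so Corollary \ref{cor::stab_trivial_injective} cannot be applied to $\SL{n}$ directly; alternatively one could avoid Proposition \ref{prop::cataclysm_equivariant} by observing that $\varphi^\varepsilon_{PQ}$ is continuous in $\varepsilon$ with $\varphi^0_{PQ} = \mathrm{Id}$, so that for $\varepsilon, \eta$ small the element $\varphi^\varepsilon_{P Q_x}\bigl(\varphi^\eta_{P Q_x}\bigr)^{-1}$ — which lies in $\{\pm\mathrm{Id}\}$ by the argument of Corollary \ref{cor::stab_trivial_injective} — must be $\mathrm{Id}$, and then invoke Proposition \ref{prop::shearing_injective}. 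The vertex count and the linear algebra are routine.
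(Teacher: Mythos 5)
Your proof is correct and follows essentially the same route as the paper: verify the hypothesis of Corollary~\ref{cor::stab_trivial_injective} for $\mathrm{PSL}(n,\mathbb{R})$, then pass to $\SL{n}$ via the quotient map. The paper states the key fact---that the common stabiliser of a triple of flags in the Hitchin limit set is trivial in $\mathrm{PSL}(n,\mathbb{R})$---without proof, whereas you supply the argument (at least three ideal vertices per complementary region, hyperconvexity of the Hitchin limit curve, the linear-algebra check that a diagonal matrix fixing a vector with all nonzero coordinates is scalar); and where the paper splits into $n$ odd (apply Corollary~\ref{cor::stab_trivial_injective} directly) versus $n$ even (project to $\mathrm{PSL}(n,\mathbb{R})$), you handle all $n$ uniformly via Proposition~\ref{prop::cataclysm_equivariant}, which is a clean minor streamlining rather than a different idea.
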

\begin{proof}
For Hitchin representations into $\mathrm{PSL}(n, \mathbb{R})$ and $\SL{n}$ for odd $n$, the stabilizer of any triple of flags is trivial and the claim follows from Corollary \ref{cor::stab_trivial_injective}.
For $\SL{n}$ with $n$ even, the claim follows by projecting to $\mathrm{PSL}(n, \mathbb{R})$ and using injectivity there.
\end{proof}

\printbibliography

\end{document}